\numberwithin{equation}{section}
\newtheorem{Theorem}{Theorem}[section]
\newtheorem{Lemma}[Theorem]{Lemma}
\newcommand{\divo}{\textnormal{div}}
\def\dx{\,dx}
\def\tb{\tilde B_j}
\newcommand{\rif}[1]{(\ref{#1})}
\newcommand{\trif}[1] {\textnormal{\rif{#1}}}
\DeclareMathOperator*{\osc}{osc}
\def\mean#1{\mathchoice%
         {\mathop{\kern 0.2em\vrule width 0.6em height 0.69678ex depth -0.58065ex
                 \kern -0.8em \intop}\nolimits_{\kern -0.4em#1}}%
         {\mathop{\kern 0.1em\vrule width 0.5em height 0.69678ex depth -0.60387ex
                 \kern -0.6em \intop}\nolimits_{#1}}%
         {\mathop{\kern 0.1em\vrule width 0.5em height 0.69678ex
             depth -0.60387ex
                 \kern -0.6em \intop}\nolimits_{#1}}%
         {\mathop{\kern 0.1em\vrule width 0.5em height 0.69678ex depth -0.60387ex
                 \kern -0.6em \intop}\nolimits_{#1}}}
\newcommand{\aveint}[2]{\mathchoice%
          {\mathop{\kern 0.2em\vrule width 0.6em height 0.69678ex depth -0.58065ex
                  \kern -0.8em \intop}\nolimits_{\kern -0.45em#1}^{#2}}%
          {\mathop{\kern 0.1em\vrule width 0.5em height 0.69678ex depth -0.60387ex
                  \kern -0.6em \intop}\nolimits_{#1}^{#2}}%
          {\mathop{\kern 0.1em\vrule width 0.5em height 0.69678ex depth -0.60387ex
                  \kern -0.6em \intop}\nolimits_{#1}^{#2}}%
          {\mathop{\kern 0.1em\vrule width 0.5em height 0.69678ex depth -0.60387ex
                  \kern -0.6em \intop}\nolimits_{#1}^{#2}}}
\newcommand{\parenthezises}[1]{\arabic{#1}}
\newcommand{\loc}{\textnormal{loc}}
\newcommand{\dist}{\textnormal{dist}}
\newcommand{\R}{{\mathbb R}}
\newcommand{\N}{{\mathbb N}}
\newcommand{\vs}{\vspace{3mm}}
\newcommand{\F}{{\mathcal F}}
\newcommand{\eps}{\varepsilon}
\def\er{\mathbb R}
\newcommand{\ern}{\mathbb{R}^n}
\def\eqn#1$$#2$${\begin{equation}\label#1#2\end{equation}}
\begin{document}

\title[Borderline gradient continuity of minima]{Borderline gradient continuity of minima}

\vspace{5mm}

\author{Paolo Baroni}
\address{Paolo Baroni, Department of Mathematics, Uppsala University, L\"agerhyddsv\"agen 1, SE-751 06, Uppsala, Sweden} \email{paolo.baroni@math.uu.se}

\author{Tuomo Kuusi}
\address{Tuomo Kuusi, Aalto University, Institute of Mathematics, P.O. Box 11100, FI-00076 Aalto, Finland} \email{tuomo.kuusi@tkk.fi}

\author{Giuseppe Mingione}
\address{Giuseppe Mingione, Dipartimento di Matematica e Informatica, Universit\`a di Parma, 
Parco Area delle Scienze 53/a, Campus, I-43124 Parma, Italy}
\email{giuseppe.mingione@unipr.it.}


\date{\scriptsize \today}

\maketitle
\centerline{{\em To Ha\"im Brezis, a master of nonlinear analysis}}

\begin{abstract}
The gradient of any local minimiser of functionals of the type
$$
w \mapsto \int_\Omega f(x,w,Dw)\dx+\int_\Omega w\mu\dx,
$$
where $f$ has $p$-growth, $p>1$, and $\Omega \subset \mathbb R^n$, is continuous provided the optimal Lorentz space condition $\mu \in L(n,1)$ is satisfied and $x\to f(x, \cdot)$ is suitably Dini-continuous.
\end{abstract}


\section{Introduction}
The aim of this paper is to prove a variational analog of a borderline regularity result that, originally being a well-known consequence of a linear fact due to Stein, has recently found optimal nonlinear extensions in \cite{KMI1, KMri, KMStein}. Indeed, a result from \cite{stein} asserts that if $v \in W^{1,1}$ is a Sobolev function defined in $\er^n$ with $n \geq2$, then 
\eqn{stein}
$$
Dv \in L(n,1)   \ \Longrightarrow \   v \ \mbox{is continuous}\,.
$$
The Lorentz space $L(n,1)(\Omega)\equiv L(n,1)$, here with $\Omega \subset \er^n$, consists of all measurable functions $g$ satisfying
\eqn{lorentzdefi}
$$
\int_0^\infty |\{x \in \Omega\, : \, |g(x)|>t\}|^{1/n} \, dt < \infty\;.
$$
The implication in \rif{stein} is the optimal limiting case of Sobolev-Morrey embedding. Applications to regularity actually lead to a reformulation of Stein's theorem in terms of sharp gradient continuity criteria for solutions to the linear equation
\eqn{lap}
$$
\triangle u =\mu\qquad \mbox{in} \quad \er^n\;.
$$
Indeed, using standard interpolation \rif{stein} allows to conclude that 
\eqn{mainimpli}
$$\mu \in L(n,1) \Longrightarrow Du\  \mbox{is continuous}\;.$$
Now, while this result seems to be deeply linked to the fact that the one in \rif{lap} is a linear equation, very recent developments in \cite{KMStein, KMI1} have revealed an unsuspected nonlinear nature of this phenomenon. Indeed, when considering the so called $p$-Laplacean equation defined by 
\eqn{plap}
$$
\divo\, (|Du|^{p-2}Du)=\mu,\qquad p>1\;,
$$ 
we see that, notwithstanding the nonlinear, degenerate nature of the operator appearing on the left-hand side, the implication in \rif{mainimpli} still holds. In fact, the result also extends to more general equations in divergence form with $p$-Laplacean structure as
\eqn{plap2}
$$
\divo\, a(x,Du)=\mu\;,
$$ 
where the vector field $x \mapsto a(x, \cdot)$ is Dini-continuous (see \rif{Dini1} below and \cite{KMI1, KMri} for precise assumptions). The Dini-continuity of coefficients is a necessary conditions 
for the continuity of the gradient already in the case of linear equations, as shown in \cite{mazyaex}. One of the most interesting features of the condition in \rif{mainimpli} is that this is actually independent of $p$. The one in \rif{plap} is the Euler-Lagrange equation of the functional
\begin{equation}\label{functional0}
w \mapsto \frac1p\int_\Omega |Dw|^p  \dx+\int_\Omega w\mu\dx
\end{equation}
and this observation is the starting point of this paper. Indeed, here we are interested in understanding 
if a result of the type explained above has also a variational nature. For this reason we are considering functionals of the type
\begin{equation}\label{functional}
\mathcal F(w,\Omega):=\int_\Omega f(x,w,Dw)\dx+\int_\Omega w\mu\dx\;.
\end{equation}
Here $\Omega$ denotes a bounded domain of $\R^n$, $n\geq2$ and the functional is naturally defined over the space $W^{1,p}(\Omega)$, $p>1$, having the integrand $f:\Omega\times\R\times\R^n\to\R$ polynomial growth of order $p$ with respect to the gradient variable. Indeed, the growth and ellipticity assumptions we impose on the energy density $f:\Omega\times\R\times\R^n\to\R$ are the classic
\begin{equation}\label{asss.f}
\hspace{-5mm}\begin{cases}
\xi\mapsto f(x,u,\xi)\qquad\text{is $C^2$-regular}\\[3pt]
\nu(s+|\xi|)^p\leq f(x,u,\xi)\leq L(s+|\xi|)^p \\[3pt]
\nu(s+|\xi|)^{p-2}|\lambda|^2\leq \langle \partial^2 f(x,u,\xi)\lambda,\lambda\rangle\leq L(s+|\xi|)^{p-2}|\lambda|^2\\[3pt]
|f(x_1,u_1,\xi)-f(x_2,u_2,\xi)| \leq \tilde L\big[\omega(|x_1 -x_2|)+|u_1-u_2|^\alpha\big](s+|\xi|)^p
\end{cases}
\end{equation}
for every $x,x_1,x_2\in\Omega$, every $u,u_1,u_2\in\R$, $\xi,\lambda\in\R^n$, for parameters $0<\nu\leq 1\leq L$, $\tilde L\geq0$, $s\in[0,1]$ and $\alpha\in(0,1]$. Here, and also later in the manuscript, $\partial f$ stands for the gradient of $f$ with respect to the $\xi$ variable and $\omega\colon [0,\infty) \to [0,1]$ is a {\em modulus of continuity}, that is a continuous, non-decreasing, concave function such that $\omega(0)=0$. These conditions are obviously satisfied by the functional in \rif{functional0} with $s=0$ and with no dependence on $(x,u)$ occurring, i.e. $\tilde L =0$. 

\vspace{3mm}

We recall that a {\em local minimiser} of the functional $\mathcal F$ is a map $u\in W^{1,p}_\loc(\Omega)$ such that
\[
\mathcal F(u,{\rm supp}\, \varphi)\leq \mathcal F(u+\varphi,{\rm supp}\, \varphi)
\]
for any variation $\varphi\in W^{1,p}(\Omega)$ such that ${\rm supp}\, \varphi\subset \Omega$. The main point here is that the functional $\F$ is non-differentiable, due to the H\"older continuity dependence with respect to the second variable displayed in \rif{asss.f}$_4$ (at least when $\alpha <1$). In other words no Euler-Lagrange equation is available for $\F$ and the results of \cite{KMI1, KMri, KMStein} cannot be used. The aim of this paper is now to show that the one in \rif{mainimpli} is in fact a general phenomenon, that does not require an equation, and that holds directly for minimisers of non necessarily differentiable functionals. The results will also depend, essentially in an optimal way, on the regularity of the integrand $f(\cdot)$ with respect to the variable $x$, and for this we need a preliminary definition. Indeed, we start assuming that $\omega(\cdot)$ appearing in \rif{asss.f}$_4$ 
is $1/2$-Dini-continuous, i.e.
\begin{equation}\label{Dini1/2}
\int_0[\omega(\rho)]^{1/2}\,\frac{d\rho}{\rho}<\infty\;.
\end{equation}
Note that this condition is weaker than the H\"older regularity, but stronger than the plain {\em Dini continuity}, which is in fact 
\begin{equation}\label{Dini1}
\int_0\omega(\rho)\,\frac{d\rho}{\rho}<\infty\;.
\end{equation}
Our first result is now
\begin{Theorem}\label{main.thm}
Let $u\in W^{1,p}_\loc(\Omega)$ a local minimiser of the functional \eqref{functional}, where the energy density $f(\cdot)$ satisfies assumptions \eqref{asss.f}, $\omega(\cdot)$ is $1/2$-Dini continuous in the sense of \trif{Dini1/2} and where $\mu\in L(n,1)$. Then $Du$ is continuous.
\end{Theorem}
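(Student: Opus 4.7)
The plan is to prove continuity of $Du$ by combining a comparison procedure (to bypass the absence of the Euler--Lagrange equation) with the nonlinear potential estimates from \cite{KMI1,KMri,KMStein} applied to a fully frozen equation, and to iterate the resulting one-step decay via a Campanato-type argument driven by the $1/2$-Dini assumption on $\omega(\cdot)$ and by the $L(n,1)$ condition on $\mu$.

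First, I would collect standard preliminaries for local minimisers of $\mathcal F$: interior higher integrability of $Du$, local boundedness and continuity (with an explicit modulus $\tilde\omega$) of $u$ itself, so that on any ball $B_r(x_0)\Subset\Omega$ the oscillation of $u$ is controlled. Next, on a fixed ball $B_r\equiv B_r(x_0)$, I would perform a \emph{double freezing}: define $v\in u+W^{1,p}_0(B_r)$ as the unique minimiser of the $u$-frozen functional $w\mapsto \int_{B_r}f(x,(u)_{B_r},Dw)\dx+\int_{B_r}w\mu\dx$, and then $w\in v+W^{1,p}_0(B_{r/2})$ as the solution of the $(x,u)$-frozen Euler--Lagrange equation $\divo\,\partial f(x_0,(u)_{B_r},Dw)=0$ in $B_{r/2}$. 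Since $\mathcal F$ is minimised by $u$ and the $u$-frozen functional by $v$, testing minimality with $u-v$ and exploiting the uniform convexity in $\xi$ from \eqref{asss.f}$_3$ (the standard $V$-function monotonicity) yields a quantitative $L^p$ comparison of the type
\[
\mean{B_r}|V(Du)-V(Dv)|^2\dx \,\leq\, c\bigl[\omega(r)+\tilde\omega(r)^{\alpha}\bigr]\mean{B_r}(s+|Du|)^p\dx,
\]
while the second comparison $v\leftrightarrow w$ uses \eqref{asss.f}$_4$ frozen in $u$ to replace $x$ by $x_0$, producing a similar bound with $\omega(r)$ on the right. The measure term is absorbed by the Riesz potential $\mathbf I_1^{|\mu|}(x_0,r)$ arising from testing the weak form $\divo\,\partial f(x,(u)_{B_r},Dv)=\mu$ against $u-v$.

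The third step is to apply to $w$ the gradient continuity/oscillation estimate from \cite{KMI1,KMri}: since the frozen operator $\xi\mapsto \partial f(x_0,(u)_{B_r},\xi)$ has constant coefficients and $p$-Laplacean structure, $Dw$ is H\"older continuous inside $B_{r/2}$ with an estimate of Campanato type,
\[
\osc_{B_{\theta r}} Dw \,\leq\, c\,\theta^{\beta}\Bigl(\mean{B_{r/2}}(s+|Dw|)^p\dx\Bigr)^{1/p},
\]
for some $\beta=\beta(n,p,\nu,L)\in(0,1)$ and every $\theta\in(0,1/4)$. Combining this with the two comparison inequalities via Young's inequality---whose square-root nature is precisely what forces $\omega^{1/2}$, not $\omega$, to appear---one obtains a one-step excess decay of the form
\[
E(x_0,\theta r)\leq c\,\theta^{\beta}E(x_0,r)+c\bigl[\omega(r)^{1/2}+\tilde\omega(r)^{\alpha/2}\bigr]H(x_0,r)+c\,\mathbf I_1^{|\mu|}(x_0,r),
\]
where $E$ is an averaged excess of $Du$ and $H$ is an averaged $L^p$-norm of $(s+|Du|)$.

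Finally, I would iterate along the geometric sequence $r_k=\theta^k r_0$, choosing $\theta$ so small that $c\theta^\beta\leq 1/2$ and checking that the $H$-term stays bounded via the a priori pointwise gradient potential estimate (which itself is obtained by a parallel but simpler iteration, using that $\mathbf I_1^{|\mu|}$ is locally bounded whenever $\mu\in L(n,1)$, see \eqref{lorentzdefi}). Summing the iterated inequality and using \eqref{Dini1/2} together with $\mu\in L(n,1)$ yields $\sum_k[\omega(r_k)^{1/2}+\mathbf I_1^{|\mu|}(x_0,r_k)]<\infty$ uniformly in $x_0$, and hence a uniform modulus of continuity for $Du$ on compact subsets of $\Omega$. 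The principal obstacle is the non-differentiability highlighted by the authors: all the quantitative comparison estimates must be carried out purely at the level of minimality, and the transfer from $u$-Hölder / $x$-Dini data to gradient regularity loses exactly one square root, which is the analytical reason for the appearance of the $1/2$-Dini hypothesis \eqref{Dini1/2} in place of the plain Dini condition \eqref{Dini1} sufficient in the PDE setting.
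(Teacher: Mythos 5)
Your overall architecture --- comparison with frozen problems, a one-step excess decay, and an iteration driven by the $1/2$-Dini condition and by $\mu\in L(n,1)$ --- is the same as the paper's, and the double freezing you propose (first the $u$-slot, then the $x$-slot) is a legitimate variant of the single freezing used there. The genuine gap is in the step where you claim the factor $[\omega(r)]^{1/2}$ in an \emph{unconditional} one-step excess decay, and it concerns precisely the degenerate range $p>2$. Minimality gives, via Lemma \ref{Lemma.V}, an energy-level comparison of the form $\mean{B_r}|V_s(Du)-V_s(Dv)|^2\,dx\le c\,\omega(r)\mean{B_r}(s+|Du|)^p\,dx$ plus the measure term, with $\omega$ to the \emph{first} power. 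To feed this into the excess you must pass from the $V$-function to the gradient. For $p\ge2$ the pointwise inequality \eqref{equiv.V.p} only yields $\bigl(\mean{B_r}|Du-Dv|^p\,dx\bigr)^{1/p}\le c\,[\omega(r)]^{1/p}\lambda$, and for $p>2$ the exponent $1/p$ is \emph{worse} than $1/2$: the hypothesis \eqref{Dini1/2} does not control $\int[\omega(\rho)]^{1/p}\,d\rho/\rho$. The only way to recover the exponent $1/2$ is to write $|Du-Dv|\le (s+|Du|+|Dv|)^{(2-p)/2}\cdot(s+|Du|+|Dv|)^{(p-2)/2}|Du-Dv|$ and apply Cauchy--Schwarz, and the negative exponent $(2-p)/2$ then requires a pointwise \emph{lower} bound $|Dv|\ge\lambda/B$ on the comparison map. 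Your decay inequality is asserted with no such hypothesis, so as written it cannot be established when $p>2$.

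This non-degeneracy is not a technicality: in the paper it is hypothesis \eqref{ass.lambda} of Lemma \ref{Lemma:comparison.I1}, and securing it is the purpose of the exit-time index, of the density-improvement Lemma \ref{Lemmatau}, and of the reverse inequality \eqref{riversa} (needed because the excess must then be measured in $L^{p'}$ --- the exponent dictated by the Sobolev duality with $\mu$ --- while the energy comparison requires $L^p$-control, which must be recovered from $L^{p'}$-averages plus the datum). None of this appears in your outline, and your remark that the a priori gradient bound follows from ``a parallel but simpler iteration'' is inaccurate: the $L^\infty$ bound is exactly where this machinery is deployed, and the continuity proof must again split into degenerate and non-degenerate alternatives rather than simply summing the decay inequality. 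A secondary imprecision: testing with $u-v$ and using Sobolev embedding produces the term $r\bigl(\mean{B_r}|\mu|^q\,dx\bigr)^{1/q}$ with $q=np/(n+p)>1$, not the linear Riesz potential $\mathbf{I}_1^{|\mu|}$; this is harmless because $L(n,1)$ controls the corresponding dyadic series for every $q<n$ (Lemma \ref{Lem.Ln1}), but the exponent $q>1$ is essential for the duality argument. For $1<p\le 2$ your outline is essentially complete and corresponds to Lemma \ref{comparison.I1-}, where no lower bound on $|Dv|$ is needed.
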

The result of the previous theorem is the analog of the one from \cite{KMI1, KMri} valid for general divergence form equations of the type in \rif{plap2} apart from the fact that $1/2$-Dini continuity \rif{Dini1/2} is required instead of the weaker Dini continuity \rif{Dini1}. This fact is not technical. In fact, as noticed in \cite{GGb}, the modulus of continuity of the function $x \to f(x, \cdot)$ is not in general inherited by 
$x \to \partial f(x, \cdot)$ and results cannot be recovered by using the Euler-Lagrange equation, also in the case this last one exists. For instance, it can be proved that under assumptions \rif{asss.f} if $x \to f(x, \cdot)$ is H\"older continuous 
with exponent $\alpha \in (0,1)$ then $x \to \partial f(x, \cdot)$ is H\"older continuous with a worst exponent, namely $\alpha/2$. 
Such loss of regularity appears with other moduli of continuity too. In this respect, and recalling that the Dini continuity is in general necessary for proving the gradient continuity of solutions to equations as \rif{plap2}, the assumption of $1/2$-Dini continuity of Theorem \ref{main.thm} appears to be optimal in that it serves to rebalance this loss of regularity when passing from $f$ to $\partial f$. In order to use Dini-continuity of coefficients as an effective assumption we then need to consider an additional, natural condition on $\partial f (\cdot)$, namely 
\begin{multline}\label{further.asss.f}
|\partial f(x_1,u_1,\xi)-\partial f(x_2,u_2,\xi)| \\\leq \tilde L\big[ \omega(|x_1 -x_2|)+|u_1-u_2|^{\alpha}\big](s+|\xi|)^{p-1}
\end{multline}
for every $x_1,x_2\in\Omega$, every $u_1,u_2\in\R$, $\xi\in\R^n$; this assumptions is also of common use in the literature \cite{GGb, KMBoundary}. In this way we have 
\begin{Theorem}\label{second.thm}
Let $u\in W^{1,p}_\loc(\Omega)$ a local minimiser of the functional \eqref{functional} and suppose that $f(\cdot)$ satisfies \eqref{asss.f} and also \eqref{further.asss.f}, where $\omega(\cdot)$ is Dini continuous as in \trif{Dini1}, with $\mu\in L(n,1)$. Then $Du$ is continuous.
\end{Theorem}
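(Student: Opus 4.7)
The plan is to adapt the nonlinear Stein-type comparison scheme of \cite{KMI1, KMri, KMStein} to the variational setting. Assumption \trif{further.asss.f} is what makes the proof strategically different (and easier) than Theorem \ref{main.thm}: since now $\partial f$ enjoys the Dini modulus $\omega$ directly, the loss of regularity incurred in passing from $f$ to $\partial f$, which forced the $1/2$-Dini assumption in Theorem \ref{main.thm}, is absent. In particular, after a standard approximation smoothing $f$ in the $u$-variable, $u$ can be given the meaning of a weak solution of $\divo\, \partial f(x,u,Du) = \mu$, and one can work essentially as in \cite{KMri}. The scheme is then: fix $B_r := B_r(x_0) \Subset \Omega$, freeze coefficients at $(x_0, u_r)$ with $u_r := (u)_{B_r}$, compare $u$ to the minimiser $v \in u + W^{1,p}_0(B_r)$ of the frozen functional $w \mapsto \int_{B_r} f(x_0, u_r, Dw)\dx$, and iterate the resulting decay inequality for the excess of $Du$ along a dyadic chain, using \trif{Dini1} and the characterization of $\mu \in L(n,1)$ via uniform smallness of the linear Riesz potential $\mathbf{I}_1^{|\mu|}(x_0,r) := \int_0^r |\mu|(B_\rho(x_0))\rho^{1-n}\,d\rho/\rho$.

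First I would deploy the frozen-problem regularity: $v$ solves an autonomous uniformly $p$-elliptic Euler equation, so Uhlenbeck theory gives $Dv \in L^\infty_\loc(B_r)$ together with
\[
\mean{B_\rho} |Dv - (Dv)_{B_\rho}|\dx \leq c\,(\rho/r)^\beta \mean{B_r} |Dv - (Dv)_{B_r}|\dx, \qquad \rho \leq r,
\]
for some $\beta = \beta(n,p,\nu,L) > 0$. Second, testing the Euler equations for $u$ and $v$ against $u - v$ and exploiting monotonicity of $\xi \mapsto \partial f(x_0, u_r, \xi)$ together with \trif{further.asss.f}, I expect the comparison estimate
\[
\mean{B_r} |Du - Dv|\dx \leq c\,\bigl[\omega(r) + (\osc_{B_r} u)^\alpha\bigr]\,\mathcal{E}(r) + c\,\mathbf{I}_1^{|\mu|}(x_0, 2r),
\]
where $\mathcal{E}(r)$ is a suitable $L^p$-average of $s + |Du|$ on $B_{2r}$. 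Combining the two via the triangle inequality and choosing a small scaling factor $\sigma \in (0,1)$ to absorb the leading term, one arrives at the key oscillation-decay
\[
\mean{B_{\sigma r}} |Du - (Du)_{B_{\sigma r}}|\dx \leq \tfrac12 \mean{B_r} |Du - (Du)_{B_r}|\dx + c_\sigma \bigl\{[\omega(r) + r^\alpha]\mathcal{E}(r) + \mathbf{I}_1^{|\mu|}(x_0, 2r)\bigr\},
\]
granted that $\osc_{B_r} u \leq c\,r\|Du\|_{L^\infty(B_{2r})}$ is controlled a priori. Iterating on $r_k = \sigma^k r$, the Dini condition \trif{Dini1} together with $\mathbf{I}_1^{|\mu|}(x_0,r) \to 0$ as $r \to 0$ locally uniformly in $x_0$ (the latter being equivalent to $\mu \in L(n,1)$) force the excess of $Du$ to vanish at a rate independent of $x_0$, and a Campanato/Lebesgue-point argument promotes this to continuity of $Du$.

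The hard part will be producing the comparison estimate with the sharp $\omega(r)$-dependence and, in parallel, proving an a priori Lipschitz bound of the form $\|Du\|_{L^\infty(B_{2r})} \leq c\,\mathcal{E}(4r) + c\,\mathbf{I}_1^{|\mu|}(x_0,4r)$, which is what feeds the control of $(\osc_{B_r} u)^\alpha$ in the iteration. Both pieces must be developed through a parallel comparison/nonlinear-potential machinery of the kind in \cite{KMI1, KMri}, with an additional layer of approximation to accommodate the non-differentiability of $\F$ in $u$; a uniform limit passage then transfers the estimates to $u$ itself. Once these two blocks are in place, the assembly into the dyadic iteration above is routine.
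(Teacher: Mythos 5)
Your reduction to the equation case is the step that fails. You propose to smooth $f$ in the $u$-variable so that $u$ ``can be given the meaning of a weak solution of $\divo\, \partial f(x,u,Du)=\mu$'' and then to argue as in \cite{KMri}. There are two problems. First, even for $f$ smooth in $u$ the Euler--Lagrange equation of \eqref{functional} is not $\divo\,\partial f(x,u,Du)=\mu$ but $\divo\,\partial f(x,u,Du)=\partial_u f(x,u,Du)+\mu$, and the extra term has $p$-growth in $Du$: it is not a datum in $L(n,1)$ and is not covered by \cite{KMI1, KMri}. Second, and more fundamentally, under \eqref{asss.f}$_4$ with $\alpha<1$ the map $u\mapsto f(x,u,\xi)$ is only H\"older continuous, so $\partial_u f$ does not exist and no Euler--Lagrange equation is available; moreover a minimiser of \eqref{functional} is not a minimiser (nor a quantifiable almost-minimiser) of the functional with smoothed integrand, so there is no limit passage transferring equation-based estimates back to $u$. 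This is precisely the obstruction the paper is built around: the comparison estimate must be extracted directly from the minimality inequality $\mathcal F(u,B_\rho)\le\mathcal F(v,B_\rho)$, decomposing $f(x_0,(u)_{B_\rho},Du)-f(x_0,(u)_{B_\rho},Dv)$ into frozen and unfrozen pieces and controlling the $u$- and $x$-dependent pieces by the maximum principle for the frozen problem together with the a priori De Giorgi--type estimate $[\osc_{B_\rho}u]^\alpha\le c\rho^{\alpha\gamma}$ of Theorem \ref{Holder}. It is there, not in an equation for $u$, that \eqref{further.asss.f} enters: one writes $G(x,\xi):=f(x_0,(u)_{B_\rho},\xi)-f(x,u(x),\xi)$ and bounds $\partial G$ via \eqref{further.asss.f}, which produces $[\bar\omega(\rho)]^2$ in place of $\tilde\omega(\rho)$ in the comparison estimate and is exactly what allows plain Dini continuity instead of $1/2$-Dini.

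Two further gaps. Your oscillation-decay inequality presumes a comparison estimate for $\mean{B_{\rho}}|Du-Dv|\dx$ that is linear in the relevant gradient scale $\lambda$; but monotonicity only controls $\mean{B_{\rho}}|V_s(Du)-V_s(Dv)|^2\dx$, and converting this into an $L^{p'}$ (or $L^p$) distance with the correct homogeneity requires, for $p>2$, the nondegeneracy $\lambda/B\le|Dv|\le B\lambda$ on the smaller ball, obtained in the paper through an exit-time argument combined with the density-improvement Lemma \ref{Lemmatau}. Your proposal does not address this, and without it the iteration does not close for $p\neq 2$. Finally, controlling $\osc_{B_r}u$ by $r\,\|Du\|_{L^\infty(B_{2r})}$ inside the very iteration that is supposed to produce the $L^\infty$ gradient bound is circular; the paper instead uses the equation-free H\"older estimate of Theorem \ref{Holder}, which needs only $\mu\in L^n$ and is established before the gradient bound.
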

Assumption \rif{further.asss.f} above is automatically satisfied in many cases, for instance by splitting densities 
and functionals of the type
\[
w \mapsto \int_\Omega g(x,w)h(Dw)\dx+\int_\Omega w\mu\dx\;,
\]
with 
\[
|g(x_1,u_1)-g(x_2,u_2)|\leq c\big[ \omega(|x_1 -x_2|)+|u_1-u_2|^{\alpha}\big]
\] 
and $h:\R^n\to\R$ having $p$-growth in the sense of \eqref{ass.h}$_1$ below. In this respect, 
Theorem \ref{second.thm} recovers the results of \cite{KMI1, KMri} for equations as in \rif{plap2}, when these are the Euler-Lagrange of a functional of the type considered here. This happens for instance when considering differentiable functionals as
\[
w \mapsto \int_\Omega f(x,Dw)\dx+\int_\Omega w\mu\dx
\]
which is in fact \rif{plap2} with $a=\partial f$, when \eqref{further.asss.f} is in force. 

\vs

We conclude spending a few words on the techniques used in this paper and the relative background. 
The methods we use find their origins in nonlinear potential theory \cite{MH, KL1, KL2, minjems, DM} and use certain exit time arguments and linearisation methods already introduced in \cite{KMI1, KMStein} together with several basic regularity results for solutions to $p$-Laplacean type equations (see for instance \cite{DB, Manfredi1, Manfredi2}). The novelty here consists of framing these recently introduced techniques in the variational setting, and performing estimates without using equations but using directly the minimality 
property. In this respect our results can also be framed in a line of research that started with the papers \cite{GGiusti, GGInv} and that has gained several contributions (see for instance \cite{KMfunctionals, dark} for references). The common point in these papers 
is in fact that regularity results are obtained using directly the minimality property of solutions rather than the fact that they solve an equation. 
In particular, in this paper we build a bridge between these variational techniques and nonlinear potential theory, thereby proving some borderline results for minimisers of non-differentiable functionals. Finally, a few words on the role of the space $L(n,1)$. 
This space already appears in the study of the $p$-Laplacian equations and systems \cite{CM1, CM2, DM0, KMStein, guide}. Lorentz spaces are of common use to prove endpoint estimates and describe results that are otherwise 
unachievable using Lebesgue spaces \cite{BW, BFS}. 

\section{Preliminary material}

\subsection{Notation}
In this paper we shall adopt the convention of denoting by $c$ a constant, {\em always larger than one}, that may vary from line to line; peculiar dependencies on parameters will be properly emphasized in parentheses when needed, sometimes just at the end of the chains of equations, for the sake of readability. Special occurrences will be denoted by special symbols, such as $c_1,c_2, \tilde c$. In the following \[B_R(x_0):=\{x \in\er^n \, :\, |x-x_0|< R\}\] will denote the open ball with center $x_0$ and radius $R$. We shall avoid to write the center of the balls when no ambiguity will arise: often the reader will read $B_R \equiv B_R(x_0)$ or the like. With $\delta$ being a positive number, we shall also denote by $\delta B$ the ball concentric to $B$ with radius magnified by a a factor $\delta$. With $\mathcal B\subset \ern$ being a measurable set with positive, finite measure and $\ell:\mathcal B \to \R^k$, $k \in \N$, an integrable map, we denote with ${(\ell)}_{\mathcal B}$ the averaged integral
\[
      {(\ell)}_{ \mathcal B}:=\mean{ \mathcal B} \ell\, dx := \frac{1}{|\mathcal B|} \int_{\mathcal B} \ell\, dx\;.
\]
A useful property, which will be often used, is the following one:
\begin{equation}\label{prop.exc}
\mean{\mathcal B}|\ell-(\ell)_{\mathcal B}|^t\dx\leq 2^t\mean{\mathcal B}|\ell-\xi|^t\dx \qquad\text{for all $t\geq 1$ and all $\xi\in\R^k$} ;
\end{equation}
here $\mathcal B$ is as above and $\ell\in L^t(\mathcal B)$. With $f \colon \mathcal B \to \er^k$ being a vector field, we shall denote
\[
\osc _{\mathcal B} f := \sup_{x,y\in \mathcal B} \, |f(x)-f(y)|\;,
\]
where with $\sup$ we denote the essential supremum. Finally, $\N:=\{1,2,\dots\}$ while $\N_0:=\N\cup\{0\}$.

\subsection{Lorentz spaces}\label{Lorentz}
The Lorentz space $L(n,1)$ has already been defined in \rif{lorentzdefi} to describe the main assumption concerning the function $\mu$ appearing in \rif{functional}. By eventually letting $\mu \equiv 0$ outside $\Omega$ we may assume that $\mu$ is defined on the whole $\ern$ and that 
\[
\int_0^\infty |\{x \in \ern\, : \, |\mu(x)|>t\}|^{1/n} \, dt < \infty\;.
\]
Therefore from now on we shall denote $L(n,1)\equiv L(n,1)(\er^n)$. One useful quantity related to the Lorentz space $L(n,1)$ is the following series
\begin{equation}\label{serie}
S_{r,\delta,q}(x_0):=\sum_{j=0}^\infty \delta^j r\Big(\mean{B_{\delta^j r}(x_0)}|\mu|^q\dx\Big)^{1/q}\qquad\text{for $q\in(1,n)$}, 
\end{equation}
where $\delta\in (0,1)$. The series is converging in the case $\mu\in L(n,1)$; the relation is encoded in the following Lemma, whose simple proof follows from the representation of Lorentz spaces in term of rearrangements (see \cite{Hardy}) and can be found in \cite[Lemma 1]{KMStein}.
\begin{Lemma}\label{Lem.Ln1}
Let $\mu\in L(n,1)$ be such that $\mu \equiv 0$ outside $\Omega$; then, if $\delta\in(0,1/4)$ and for $q\in(1,n)$, it holds that
\[
 \sup_{x_0 \in \Omega} \, S_{r,\delta,q}(x_0)\leq c\int_0^\infty |\{x \in \ern\, : \, |\mu(x)|>t\}|^{1/n} \, dt
\]
for a constant $c$ depending only on $n,q,\delta$. Moreover, there exists a continuous, non-decreasing function $d \colon [0, \infty) \to [0, \infty)$, obviously depending on $\mu$, such that $d(0)=0$ and the following inequality holds uniformly with respect to $x_0 \in \Omega$:
\begin{equation}\label{Sq}
S_{r,\delta,q}(x_0)\leq \delta^{1-n/q}d(r)\;.
\end{equation}
In particular, the following limit holds uniformly with respect to $x_0\in \Omega$:
\begin{equation}\label{Sq2}
\lim_{r\to 0}\, S_{r,\delta,q}(x_0)=0\;.
\end{equation}
\end{Lemma}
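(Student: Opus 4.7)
The strategy, following \cite[Lemma 1]{KMStein}, is to pass from the averages $\bigl(\mean_{B_\rho}|\mu|^q\bigr)^{1/q}$ to the layer-cake integral $\int_0^\infty \lambda(t)^{1/n}\dt$, where $\lambda(t):=|\{x\in\ern:|\mu(x)|>t\}|$, via Minkowski's integral inequality and a dyadic summation. As the first step, writing $|\mu(x)|=\int_0^\infty \chi_{\{|\mu|>t\}}(x)\dt$ and applying Minkowski's integral inequality in $L^q(B_\rho(x_0))$, together with the trivial bound $|B_\rho(x_0)\cap\{|\mu|>t\}|\leq \min(|B_\rho|,\lambda(t))$, yields the pointwise-in-$t$ estimate
$$
\rho\Bigl(\mean_{B_\rho(x_0)}|\mu|^q\dx\Bigr)^{1/q}\leq c(n,q)\,\rho^{\,1-n/q}\int_0^\infty \min\bigl(|B_\rho|,\lambda(t)\bigr)^{1/q}\dt.
$$

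Next I would plug this into the series defining $S_{r,\delta,q}(x_0)$ with $\rho_k:=\delta^k r$ and exchange sum and integral by Fubini--Tonelli. Fixing $t>0$, introducing $\rho_t:=(\lambda(t)/\omega_n)^{1/n}$ and splitting the summation at the threshold $k_t$ characterised by $\rho_{k_t}\leq \rho_t<\rho_{k_t-1}$, the tail $k\geq k_t$ reduces to $\omega_n^{1/q}\sum_{k\geq k_t}\rho_k\leq c\rho_t/(1-\delta)$, while the head $k<k_t$ is an increasing geometric series in $\delta^{1-n/q}>1$ (since $1-n/q<0$) whose sum is controlled by its last term and bounded by $c\rho_t/(1-\delta^{n/q-1})$ after a short computation exploiting $\rho_{k_t-1}>\rho_t$. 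Altogether
$$
S_{r,\delta,q}(x_0)\leq c(n,q)\Bigl[\tfrac{1}{1-\delta}+\tfrac{1}{1-\delta^{n/q-1}}\Bigr]\int_0^\infty \min(|B_r|,\lambda(t))^{1/n}\dt,
$$
and dominating $\min(|B_r|,\lambda(t))^{1/n}$ by $\lambda(t)^{1/n}$ produces the first inequality of the Lemma.

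For the second part I would set $d(r):=c(n,q)\int_0^\infty \min(|B_r|,\lambda(t))^{1/n}\dt$, which is continuous, non-decreasing, independent of $x_0$ and satisfies $d(0)=0$; dominated convergence with integrable majorant $\lambda(t)^{1/n}$ gives $d(r)\to 0$ as $r\to 0$, hence \eqref{Sq2} uniformly in $x_0$. Finally, for $\delta\in(0,1/4)$ an elementary check shows $(1-\delta)^{-1}+(1-\delta^{n/q-1})^{-1}\leq c(n,q)\delta^{1-n/q}$, which produces the explicit prefactor $\delta^{1-n/q}$ in \eqref{Sq}. The only mildly delicate point is the dyadic bookkeeping, where the asymmetry between the two tails (due to $1-n/q<0$) must be carefully exploited to recover both the geometric collapse and the sharp $\delta$-dependence; everything else is a routine consequence of layer-cake, Minkowski, and dominated convergence.
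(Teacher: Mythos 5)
Your argument is correct, and it is worth noting that the paper does not actually prove this lemma: it cites \cite[Lemma 1]{KMStein}, whose proof goes through the representation of the $L(n,1)$-norm in terms of the decreasing rearrangement $\mu^*$ (via the Hardy--Littlewood inequality $\int_{B_\rho}|\mu|^q\,dx\le\int_0^{|B_\rho|}(\mu^*)^q\,ds$ and a discrete Hardy-type summation, using that $\int_0^\infty s^{1/n-1}\mu^*(s)\,ds$ is comparable to $n\int_0^\infty|\{|\mu|>t\}|^{1/n}\,dt$). You instead stay entirely on the level-set side: layer cake plus Minkowski's integral inequality gives the pointwise-in-$t$ bound with $\min(|B_\rho|,\lambda(t))^{1/q}$, and the dyadic splitting at the scale where $|B_{\rho_k}|\approx\lambda(t)$ does the rest. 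The two routes are dual presentations of the same computation, but yours is self-contained and more elementary (no rearrangement machinery, no Hardy inequality), at the price of the slightly fiddly two-sided geometric bookkeeping, which you handle correctly: the tail collapses with ratio $\delta$, the head with ratio $\delta^{n/q-1}<1$ after reindexing, and the hypothesis $\delta<1/4$ keeps $(1-\delta^{n/q-1})^{-1}$ bounded by a constant depending only on $n,q$, so that the whole prefactor is absorbed into $\delta^{1-n/q}$ as required by \eqref{Sq}. Your choice $d(r)=c(n,q)\int_0^\infty\min(|B_r|,\lambda(t))^{1/n}\,dt$ is exactly the right object: it is independent of $x_0$ and $\delta$, monotone, continuous with $d(0)=0$ by dominated convergence against the integrable majorant $\lambda(t)^{1/n}$, which yields \eqref{Sq2} uniformly in $x_0$. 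One presentational caveat: retaining $\min(|B_r|,\lambda(t))$ (rather than just $\lambda(t)$) in the final integrand is essential for $d(r)\to0$, and this requires observing that when $\lambda(t)\ge|B_r|$ the entire sum is ``tail'' and is bounded by $c\,r\approx c\min(|B_r|,\lambda(t))^{1/n}$ --- you do use this implicitly, and it deserves to be said explicitly in a written-up version.
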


\subsection{The $V$ function}
It is useful to consider the following auxiliary map:
\[
V_s(\xi):=\big(s+|\xi|\big)^{(p-2)/2}\xi,\qquad \xi\in\R^n,
\]
$s$ as in \eqref{asss.f}, which is a bijection of $\R^n$. The following inequality is classic:
\begin{multline}\label{equiv.V}
\frac1c|V_s(\xi_1)-V_s(\xi_2)|^2\leq\big(s+|\xi_1|+|\xi_2|\big)^{p-2}|\xi_1-\xi_2|^2\\
\leq c|V_s(\xi_1)-V_s(\xi_2)|^2
\end{multline}
for any $p>1$ and a constant $c$ depending on $n,p$; in particular
\begin{equation}\label{equiv.V.p}
|\xi_1-\xi_2|^p\leq c|V_s(\xi_1)-V_s(\xi_2)|^2
\end{equation}
when $p\geq2$, while 
\begin{equation}\label{equiv.V.p-}
|\xi_1-\xi_2|\leq c|V_s(\xi_1)-V_s(\xi_2)|^{2/p}+c\big(s+|\xi_1|\big)^{(2-p)/2}|V_s(\xi_1)-V_s(\xi_2)|
\end{equation}
if $1<p\leq 2$, see \cite[Lemma $2$]{KMStein}. Also the constants appearing in \eqref{equiv.V.p} and \eqref{equiv.V.p-} depend only on $n$ and $p$.

It will be also useful the following fact, which can be deduced by Taylor's formula, using \eqref{asss.f}$_3$ and \eqref{equiv.V}: see for instance \cite[(3.2)]{KMfunctionals}.
\begin{Lemma}\label{Lemma.V}
Suppose $f(\cdot)$ satisfies \eqref{asss.f}$_1$ and \eqref{asss.f}$_3$, with $p>1$. Then for all $x,u\in \Omega\times\R$ and for all $\xi_1,\xi_2\in \R^n$ it holds
\[
\frac1c\big|V_s(\xi_1)-V_s(\xi_2)\big|^2\leq f(x,u,\xi_1)-f(x,u,\xi_2)-\langle \partial f(x,u,\xi_2),\xi_1-\xi_2\rangle
\]
for a constant depending only on $n,p,\nu, L$.
\end{Lemma}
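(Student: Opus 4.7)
The plan is to apply Taylor's formula with integral remainder to the $C^2$-mapping $\xi \mapsto f(x,u,\xi)$. Writing $\xi_\tau := (1-\tau)\xi_2 + \tau\xi_1$, this gives the identity
\[
f(x,u,\xi_1) - f(x,u,\xi_2) - \langle \partial f(x,u,\xi_2),\,\xi_1 - \xi_2\rangle = \int_0^1 (1-\tau)\,\langle \partial^2 f(x,u,\xi_\tau)(\xi_1 - \xi_2),\, \xi_1 - \xi_2\rangle\, d\tau.
\]
Inserting the ellipticity lower bound in \eqref{asss.f}$_3$ into the integrand, the right-hand side is at least
\[
\nu\, |\xi_1 - \xi_2|^2 \int_0^1 (1-\tau)(s + |\xi_\tau|)^{p-2}\, d\tau.
\]
In view of \eqref{equiv.V}, the lemma will follow once I establish the purely elementary inequality
\[
\int_0^1 (1-\tau)(s+|\xi_\tau|)^{p-2}\, d\tau \;\geq\; c^{-1}\,(s + |\xi_1|+|\xi_2|)^{p-2}, \qquad c = c(n,p).
\]

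The case $1 < p \leq 2$ is immediate: then $r\mapsto (s+r)^{p-2}$ is non-increasing and $|\xi_\tau| \leq (1-\tau)|\xi_2|+\tau|\xi_1|\leq |\xi_1|+|\xi_2|$, so the integrand is pointwise bounded below by $(s+|\xi_1|+|\xi_2|)^{p-2}$, and $\int_0^1 (1-\tau)\,d\tau = 1/2$ finishes the job.

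The main obstacle is the case $p>2$, where one needs a lower bound on $|\xi_\tau|$ on a subinterval of $[0,1]$ of definite length. I would split according to which of $\xi_1,\xi_2$ has the larger modulus. If $|\xi_2| \geq |\xi_1|$, then for $\tau \in [0,1/4]$ the reverse triangle inequality yields
\[
|\xi_\tau| \geq (1-\tau)|\xi_2| - \tau|\xi_1| \geq (1-2\tau)|\xi_2| \geq \tfrac12|\xi_2| \geq \tfrac14(|\xi_1|+|\xi_2|),
\]
so $s + |\xi_\tau| \geq \tfrac14(s+|\xi_1|+|\xi_2|)$, and integration of $(1-\tau)(s+|\xi_\tau|)^{p-2}$ over $[0,1/4]$ (with $(1-\tau)\geq 3/4$ there) gives the claim with constant $(7/32)\,4^{2-p}$. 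If instead $|\xi_1| \geq |\xi_2|$, the symmetric argument on $[3/4,1]$ delivers an identical estimate, since $\int_{3/4}^1(1-\tau)\,d\tau = 1/32 > 0$ still contributes a positive constant. Combining these estimates with the previous display, and recognising by \eqref{equiv.V} that $(s+|\xi_1|+|\xi_2|)^{p-2}|\xi_1-\xi_2|^2$ is comparable to $|V_s(\xi_1)-V_s(\xi_2)|^2$, concludes the proof.
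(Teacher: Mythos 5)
Your argument is correct and is precisely the one the paper has in mind: the lemma is stated there without proof, with the remark that it ``can be deduced by Taylor's formula, using \eqref{asss.f}$_3$ and \eqref{equiv.V}'' (citing \cite[(3.2)]{KMfunctionals}), and your Taylor expansion with integral remainder, the ellipticity lower bound, the elementary estimate of $\int_0^1(1-\tau)(s+|\xi_\tau|)^{p-2}\,d\tau$ split into the cases $p\leq 2$ and $p>2$, and the final appeal to \eqref{equiv.V} supply exactly the missing details. Nothing further is needed.
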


\subsection{Regularity estimates for minimisers}
We start with a suitable reformulation of some standard estimates for minimisers that can be found for instance in \cite[Chapter 7]{G}. These are summarised in the following
\begin{Theorem}\label{Holder}
Let $u$ be a minimiser of \eqref{functional}; then $u$ is locally H\"older continuous. Moreover, there exists a radius $R_H\equiv R_H(n,p,\nu,L,\|\mu\|_{L^n})\leq1$  such that the estimate
\begin{equation}\label{osc.u}
\osc_{B_\rho(x_0)}u\leq c\left(\frac{\rho}{R}\right)^\gamma\left(\mean{B_{2R}(x_0)}\big(|u|+1\big)^p\dx\right)^{1/p},\qquad 0<\rho\leq R,
\end{equation}
holds whenever $B_{2R}(x_0)\subset\Omega$ is a ball with radius $2R\leq R_H$, for an exponent $\gamma\equiv \gamma(n,p,\nu,L)\in(0,1)$ and a constant $c\geq1$ depending on $n,p,\nu,L,\|\mu\|_{L^n}$.
\end{Theorem}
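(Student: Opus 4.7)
The statement is essentially classical and the plan is to reduce it to the standard De Giorgi--Nash--Moser framework for $p$-quasi-minima as developed in \cite[Chapter~7]{G}. First I would establish a Caccioppoli inequality on super- and sub-level sets of $u$. Fix $k \in \R$, radii $\rho<R$, and a cutoff $\eta \in C^\infty_c(B_R)$ with $\eta \equiv 1$ on $B_\rho$, $|D\eta| \leq 2/(R-\rho)$. Testing minimality against $\varphi = -(u-k)_+\eta^p$ and, symmetrically, $(k-u)_+\eta^p$, and using the two-sided $p$-growth and the ellipticity in \eqref{asss.f}$_{2,3}$ together with the standard Young-type absorption of the terms involving $D\eta$, one arrives at
\[
\int_{A(k,\rho)}|Du|^p\dx \leq \frac{c}{(R-\rho)^p}\int_{A(k,R)}(u-k)_+^p\dx + c\int_{A(k,R)}(u-k)_+|\mu|\dx + c|A(k,R)|,
\]
where $A(k,R):=\{x\in B_R:u(x)>k\}$ and the last term comes from the constant $s \in [0,1]$ in \eqref{asss.f}$_2$ (an analogous estimate holds on sublevel sets).

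Next I would absorb the forcing term. By H\"older's inequality with $\mu \in L^n$ and the Sobolev--Poincar\'e embedding $W^{1,p}_0 \hookrightarrow L^{p^*}$, applied to $(u-k)_+\eta$,
\[
\int_{A(k,R)}(u-k)_+|\mu|\dx \leq c\,\|\mu\|_{L^n(B_R)}\Bigl(\int_{A(k,R)}|D(u-k)_+|^p\dx\Bigr)^{1/p}|A(k,R)|^{\theta}
\]
for a suitable $\theta>0$ (with the obvious modification if $p\geq n$, which only simplifies the argument). Absolute continuity of $\|\mu\|_{L^n}$ on small balls yields a threshold $R_H \equiv R_H(n,p,\nu,L,\|\mu\|_{L^n})\leq 1$ such that $\|\mu\|_{L^n(B_R)}$ is as small as we like for $R\leq R_H$, allowing the gradient factor on the right to be absorbed into the left of the Caccioppoli inequality via Young's inequality. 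The outcome is that, on every $B_R \subset \Omega$ with $R\leq R_H$, the map $u$ satisfies the Caccioppoli inequality of a genuine $p$-quasi-minimum of the Dirichlet integral (up to additive constants accounting for $s$ and $|A(k,R)|$).

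With this Caccioppoli inequality in hand, the local boundedness of $u$ and the oscillation decay estimate \eqref{osc.u} follow verbatim from the De Giorgi iteration and the classical oscillation lemma in \cite[Chapter~7]{G}: one proves that
$\osc_{B_{\rho}}u \leq \lambda \osc_{B_{2\rho}}u + c\rho^{\beta}$ for some $\lambda\in(0,1)$, $\beta>0$, and then a standard iteration lemma yields the power decay with exponent $\gamma\equiv\gamma(n,p,\nu,L)$ and the normalising factor on the right-hand side of \eqref{osc.u}. The exponent $\gamma$ and the structure of the estimate depend only on $n,p,\nu,L$, while the dependence on $\|\mu\|_{L^n}$ is confined to the threshold radius $R_H$ and the multiplicative constant $c$.

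The main obstacle is purely technical and lies in the first two steps: cleanly producing the Caccioppoli inequality in the non-differentiable setting (where one cannot invoke an Euler--Lagrange equation and must work with admissible variations directly) and choosing $R_H$ so that the $L^n$-forcing is subcritical. Once this is done, the remainder is a routine application of the De Giorgi scheme and no further minimality-specific arguments are required.
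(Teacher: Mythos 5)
Your proposal is correct and follows essentially the same route as the paper: both reduce the statement to the De Giorgi theory for quasi-minima in \cite[Chapter~7]{G}, with the forcing $w\mu$ controlled through $\mu\in L^n$ and the threshold radius $R_H$ coming from the absolute continuity of $\|\mu\|_{L^n(B_R)}$. The only cosmetic difference is that the paper folds $w\mu$ into the growth conditions of \cite[Equation~(7.2)]{G} (taking $b=c|\mu|$, $a=c(|\mu|+s^p)\in L^n$) and quotes \cite[Theorem~7.5]{G} and \cite[Display~(7.45)]{G} directly, whereas you re-derive the level-set Caccioppoli inequality by hand before invoking the same machinery.
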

\begin{proof}
We briefly show how to deduce \eqref{osc.u} from the estimates of \cite[Chapter 7]{G} and we try to stick to the notation used there. We can reduce to the case $p<n$, as noted in \cite[Chapter 7]{G} and reformulate the growth conditions we have now in terms of those used in \cite[Chapter 7]{G}. When looking at the functional in \rif{functional} we see that the integrand globally satisfies the growth conditions
\eqn{crescite}
$$
\nu|\xi|^p-|\mu||w|^p-|\mu|\leq f(x,w,\xi)+w\mu\leq c|\xi|^p+c|\mu||w|^p+c(|\mu|+s^p)
$$
for a.e. $x\in\Omega$ and all $(w,\xi)\in\R^{n+1}$; the constant $c$ only depends on $n,p,L$. Hence in \cite[Equation ($7.2$)]{G} we can take $\gamma=p$, $b(x):=c|\mu(x)|$ and $a(x):=c(|\mu(x)|+s^p)$; they both clearly belong to $L^n(\Omega)$ and this in enough to ensure the needed integrability required in \cite[Chapter 7]{G} with respect to the lower order terms. A computation moreover shows that we can take $\epsilon=(p-1)/n$ and hence $\beta=n\epsilon/p=1-1/p$ in all the results of \cite[Chapter 7]{G}. In particular, from \cite[Theorem $7.5$]{G} we infer that $u$ is locally bounded and that the estimate
\begin{equation}\label{sup.bound}
\sup_{B_R(x_0)}u\leq c\Big(\mean{B_{2R}(x_0)}|u|^p\dx\Big)^{1/p}+c\||\mu|+s^p\|^{1/p}_{L^n(B_{2R}(x_0))}R^{1-1/p} 
\end{equation}
holds for a constant depending on $n,p,\nu, L$ and for any ball $B_{2R}(x_0)\subset\Omega$ with radius smaller than a threshold $R_H$, depending on $n,p,\|\mu\|_{L^n}$; notice indeed that by the definition of $\xi(R)$ at page $216$, in the case $\gamma=p$ the smallness condition involves only $\|\mu\|_{L^n}$ and we can suppose $R_H\leq1$. In other words, the dependence of the radius $R_H$ on the norm $\|u\|_{W^{1,p}}$ described in \cite[Chapter 7]{G} does not take place in the present situation. One can deduce the local estimate \eqref{sup.bound} also starting from \cite[Display ($7.16$)]{G}, where the value of $\chi$ must be chosen accordingly with the Caccioppoli's inequality \cite[Display ($7.5$)]{G} and $\kappa_0=0$; at this point \eqref{sup.bound} follows performing some simple algebraic manipulations. Now we consider \cite[Display ($7.45$)]{G} that states
$$
\osc_{B_\rho(x_0)}u\leq c\Big(\frac{\rho}{R}\Big)^\gamma\Big[\osc_{B_R(x_0)}u+\chi R^\gamma\Big],\quad \chi^p:=\big\||\mu|(1+s^p+M^p)\big\|_{L^n(B_R(x_0))},
$$
for all $\rho\leq R\leq R_H$, $\gamma\in(0,1)$ as in the statement of the Theorem \ref{Holder} and a constant $c\geq1$, depending both on $n,p,\nu,L$ and where $M\geq \|u\|_{L^\infty(B_R(x_0))}$. To conclude, we estimate part of the right-hand side of the previous inequality, using \eqref{sup.bound}, in the following manner: since $R\leq R_H\leq 1$
\begin{align*}
\osc_{B_R(x_0)}u+\chi R^\gamma&\leq 2M+c(1+s+M)\|\mu\|_{L^n(B_R(x_0))}^{1/p}\\
&\leq c\big(1+\|\mu\|_{L^n(B_R(x_0))}^{1/p}\big)(1+s+M)\\
&\leq c\left(\mean{B_{2R}(x_0)}\big(|u|+1\big)^p\dx\right)^{1/p}.
\end{align*}
where $c$ also depends on $\|\mu\|_{L^n}$. 
\end{proof}
\subsection{Regularity estimates for frozen functionals}
In this section we collect a few standard facts from regularity theory of $p$-Laplacean type equations and related variational integrals. We consider variational Dirichlet problems of the type
\begin{equation}\label{M.G}
\left\{
    \begin{array}{c}
   \displaystyle  v \to \min_w
\, \int_{B_R} h(Dw)\dx   
\\[10pt]
        w\in u+W^{1,p}_0(B_R)\;,
\end{array}\right.
\end{equation}
where $B_R$ is a ball of $\R^n$, $n\geq2$, $u\in W^{1,p}(B_R)$ and the energy density $h(\cdot)$ is a $C^2(\R^n)$ map satisfying
\begin{equation}\label{ass.h}
\begin{cases}
\nu(s+|\xi|)^p\leq h(\xi)\leq L(s+|\xi|)^p; \\[3pt]
\nu(s+|\xi|)^{p-2}|\lambda|^2\leq \langle \partial^2 h(\xi)\lambda,\lambda\rangle\leq L(s+|\xi|)^{p-2}|\lambda|^2,
\end{cases}
\end{equation}
for all $\xi,\lambda\in \R^n$ where $s\in[0,1]$. We stress that by using \eqref{ass.h}$_1$ and the convexity of $h$ it is possible to show that
\begin{equation}\label{grad.h.growth}
|\partial h(\xi)|\leq c(p,L)(s+|\xi|)^{p-1}\qquad\text{for all $\xi\in\R^n$} 
\end{equation}
see \cite[Proposition 2.32]{Dacorogna} and hence $w$ solves the Euler equation relative to the functional \eqref{M.G} in its weak formulation, that is
\begin{equation}\label{euler.equation.h}
\int_{B_R}\langle \partial h(Dv),D\varphi\rangle\dx=0\qquad\text{for all $\varphi\in W^{1,p}_0(B_R)$}. 
\end{equation}
Hence the vector field $\partial h $ satisfies the ellipticity and growth conditions
\[
 \langle \partial^2 h(\xi)\lambda,\lambda\rangle\geq\nu(s+|\xi|)^{p-2}|\lambda|^2,\qquad |\partial h(\xi)|\leq c(p,L)(s+|\xi|)^{p-1}
\]
for any $\xi,\lambda\in\R^n$. The following two lemmas are a direct consequence of the classical regularity estimates valid for $p$-Laplacean type equations \cite{DB, Manfredi1, Manfredi2}. For the formulations below see for instance \cite{KMI1, guide}. 
\begin{Lemma}\label{Lem.regolarity}
Let $v\in W^{1,p}(B_R)$ be the unique solution to \eqref{M.G}; then $v\in C^{1,\beta}_\loc(B_R)$ for some $\beta(n,p,\nu, L)\in(0,1)$. Moreover the following local estimates hold:
\begin{equation}\label{sup.lambda}
\sup_{B_{R/2}}\, (s +|Dv|)\leq \Cl[c]{sup.est}\mean{B_R}\big(s+|Dv|\big)\dx 
\end{equation}
and, with $\tilde B,\lambda\geq1$, 
\begin{equation}\label{osc.lambda}
s+\sup_{B_{R/4}}|Dv|\leq \tilde B\lambda\qquad\Longrightarrow\qquad\osc_{B_{\tau R}}Dv\leq \Cl[c]{osc.est}\tau^\beta
 \tilde B\lambda 
\end{equation}
for all $\tau\in (0,1/8)$; the constants $\Cr{sup.est},\Cr{osc.est}$ depend only on $n,p,\nu, L$.
\end{Lemma}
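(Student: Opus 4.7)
The plan is to treat the lemma as a direct transcription of known $C^{1,\beta}$-estimates for homogeneous $p$-Laplacean-type equations, packaged in a form convenient for later use. Since $h\in C^2(\R^n)$ satisfies \eqref{ass.h}, the minimiser $v\in u+W^{1,p}_0(B_R)$ is unique by strict convexity, and by \eqref{grad.h.growth} it weakly solves the Euler equation \eqref{euler.equation.h} with a vector field of standard $p$-growth and ellipticity. First I would cite the classical results of Uhlenbeck, DiBenedetto, Tolksdorf, Lewis and Manfredi--Weitsman (see \cite{DB, Manfredi1, Manfredi2}) to conclude $Dv\in C^{0,\beta}_{\loc}(B_R;\R^n)$ for some $\beta(n,p,\nu,L)\in(0,1)$.

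Next I would establish \eqref{sup.lambda}. The usual path is to differentiate \eqref{euler.equation.h} in a direction $e_s$ (or to regularise via $h_\varepsilon$ and pass to the limit), test the differentiated equation with $\eta^2 (s+|Dv|-k)_+ D_s v$ or with a power of $(s+|Dv|)$ times a cutoff, and arrive at a Caccioppoli inequality for $G:=(s+|Dv|)$ of the form
\[
\int_{B_{\rho_1}}(s+|Dv|)^{p-2}|D G|^2\, dx \leq \frac{c}{(\rho_2-\rho_1)^2}\int_{B_{\rho_2}}(s+|Dv|)^{p}\dx,
\]
for concentric balls $B_{\rho_1}\subset B_{\rho_2}\subset B_R$. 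Coupled with Sobolev's embedding this yields the reverse-H\"older self-improvement needed to perform a Moser-type iteration on $G$, producing the sup bound in terms of the $L^p$-norm of $s+|Dv|$, and finally the $L^1$ bound via the Gehring/Gianquinta--Giusti trick or by simply iterating powers using the De Giorgi trick $\sup \leq \varepsilon \sup + C_\varepsilon (\,\text{mean}\,)$.

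For the oscillation estimate \eqref{osc.lambda}, the idea is a scaling argument. Given the assumption $s+\sup_{B_{R/4}}|Dv|\leq \tilde B\lambda$ with $\tilde B,\lambda\geq 1$, set $A:=\tilde B\lambda$ and rescale by defining $\tilde v(y):=v(x_0+(R/4)y)/(AR/4)$ on $B_1$. Then $\tilde v$ minimises a functional with density $\tilde h(\xi):=h(A\xi)/A^p$ which still satisfies \eqref{ass.h} with the same constants (with $s$ replaced by $s/A\leq 1$), and $|D\tilde v|\leq 1$ on $B_1$. In this non-degenerate regime the classical $C^{1,\beta}$ estimate of \cite{DB, Manfredi1, Manfredi2} gives
\[
\osc_{B_\sigma}D\tilde v \leq c\,\sigma^{\beta}\bigl(s/A+\|D\tilde v\|_{L^\infty(B_{1/2})}\bigr)\leq c\,\sigma^{\beta}
\]
for all $\sigma\in(0,1/2)$. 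Scaling back with $\sigma=4\tau$ yields $\osc_{B_{\tau R}}Dv\leq c\tau^{\beta}A=c\tau^{\beta}\tilde B\lambda$, which is precisely \eqref{osc.lambda}.

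The main obstacle is, as usual, the Lipschitz bound \eqref{sup.lambda}: in the degenerate range $p>2$ the Caccioppoli inequality degenerates where $Dv$ is small, and one has to handle this either via the standard regularisation $h_\varepsilon(\xi):=h(\xi)+\varepsilon(1+|\xi|^2)^{p/2}$ and a uniform passage to the limit, or by truncating at levels $k>s$. Everything else (uniqueness, $C^{1,\beta}$-regularity, the rescaling in the oscillation estimate) is then mechanical once this bound is in place, and the dependence of all constants is already built into the classical statements cited.
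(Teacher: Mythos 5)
Your proposal is correct and coincides with what the paper actually does: Lemma \ref{Lem.regolarity} is stated there without proof, as a direct consequence of the classical $C^{1,\beta}$ theory for $p$-Laplacean type equations \cite{DB, Manfredi1, Manfredi2}, in the formulation of \cite{KMI1, guide}. Your sketch --- uniqueness by strict convexity, the Caccioppoli/Moser iteration for \eqref{sup.lambda} with the standard interpolation down to the $L^1$ average, and the rescaling $\tilde h(\xi)=h(A\xi)/A^p$ for \eqref{osc.lambda} --- simply unpacks those citations and is sound.
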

\begin{Lemma}\label{ex.deca}
Let $v\in W^{1,p}(B_R)$ be the unique solution to \eqref{M.G}. Then for every $\bar\eps\in(0,1)$, there exists $\bar\delta\in(0,1/4)$, depending on $n,p,\nu, L,\bar\eps$, such that if $\sigma\in(0,\bar\delta\,]$, then
\begin{equation}\label{exc.smallness}
\mean{B_{\sigma R}}\big|Dv-(Dv)_{B_{\sigma R}}\big|^t\dx\leq \bar\eps \mean{B_R}\big|Dv-(Dv)_{B_R}\big|^t\dx 
\end{equation}
for any $t\in[1,2]$.
 \end{Lemma}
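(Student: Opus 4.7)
The plan is to argue by contradiction, leveraging the $C^{1,\beta}$ regularity codified in Lemma \ref{Lem.regolarity} through a blow-up/compactness scheme. I would assume the conclusion fails for some $\bar\eps\in(0,1)$ and some $t\in[1,2]$: then one finds integrands $h_k$ satisfying \eqref{ass.h} with the same parameters $\nu,L,p$ and some $s_k\in[0,1]$, radii $R_k>0$, minimisers $v_k\in W^{1,p}(B_{R_k})$ of $\int_{B_{R_k}}h_k(Dw)\dx$ in their own Dirichlet class, and scales $\sigma_k\to 0$ such that
\[
\mean{B_{\sigma_k R_k}}|Dv_k-(Dv_k)_{B_{\sigma_k R_k}}|^t\dx>\bar\eps\,\mean{B_{R_k}}|Dv_k-(Dv_k)_{B_{R_k}}|^t\dx.
\]
Replacing $v_k(x)$ by $v_k(R_k x)/R_k$, which maps minimisers to minimisers for a density still in the class \eqref{ass.h}, I may fix $R_k=1$.

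The next step is to normalise the excess. Setting $\xi_k:=(Dv_k)_{B_1}$ and $\lambda_k:=(\mean{B_1}|Dv_k-\xi_k|^t\dx)^{1/t}>0$ (the estimate is trivial otherwise), I would introduce
\[
w_k(x):=\frac{v_k(x)-(v_k)_{B_1}-\xi_k\cdot x}{\lambda_k},
\]
so that $(Dw_k)_{B_1}=0$, $\mean{B_1}|Dw_k|^t\dx=1$, and the contradictory inequality reads $\mean{B_{\sigma_k}}|Dw_k-(Dw_k)_{B_{\sigma_k}}|^t\dx>\bar\eps$. Each $w_k$ minimises the shifted-rescaled density $\tilde h_k(\xi):=\lambda_k^{-p}h_k(\xi_k+\lambda_k\xi)$, which fits the structural assumptions \eqref{ass.h} with a shift whose magnitude is controlled by $\tilde s_k:=(s_k+|\xi_k|)/\lambda_k$. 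Combining \eqref{sup.lambda} for $v_k$ with H\"older's inequality yields $\sup_{B_{1/2}}|Dw_k|\le c(1+\tilde s_k)$, and along a subsequence I would obtain $\tilde s_k\to\tilde s_\infty\in[0,\infty]$.

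If $\tilde s_\infty<\infty$, the densities $\tilde h_k$ stay in a uniformly controlled subclass of \eqref{ass.h} and, up to a further subsequence, converge locally uniformly on compact subsets of $\R^n$ to a limit density $\tilde h_\infty$ of the same structural type. The uniform $C^{1,\alpha}_{\loc}$ bounds from Lemma \ref{Lem.regolarity} then give $w_k\to w_\infty$ in $C^{1,\alpha}_{\loc}(B_1)$, and a standard variational passage to the limit shows that $w_\infty$ minimises $\int \tilde h_\infty(Dw)\dx$. Applying \eqref{osc.lambda} to $w_\infty$ yields $\osc_{B_\sigma}Dw_\infty\le c\sigma^\beta$, so the left-hand side of the failing inequality vanishes as $\sigma_k\to 0$, contradicting the lower bound $\bar\eps$. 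In the remaining case $\tilde s_\infty=\infty$, the ratio $|\xi_k|/\lambda_k\to\infty$ pushes the problem into a non-degenerate uniformly elliptic regime; an additional renormalisation around $\xi_k$ yields a limit solving a constant-coefficient linear elliptic equation, which is smooth and closes the contradiction in the same way.

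The main hurdle I anticipate is the compactness step in the degenerate sub-regime where $s_k+|\xi_k|\to 0$, so that the limit problem is a genuine $p$-Laplacean-type minimisation. There I would need both stability of minimisers under the smooth convergence $\tilde h_k\to\tilde h_\infty$ away from the degeneracy set and the uniform $C^{1,\beta}$ estimates already packaged in \eqref{osc.lambda}, so that $w_\infty$ inherits the $\sigma^\beta$-oscillation decay of its gradient and the contradiction is genuinely closed uniformly in the degeneracy parameter.
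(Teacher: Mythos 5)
The paper does not prove this lemma at all: it is quoted as a known consequence of the classical $C^{1,\alpha}$ theory for $p$-Laplacean type problems, with the formulation taken from \cite{KMI1, guide} (the relevant statement there is the excess-decay theorem $E(Dv,B_\varrho)\leq c(\varrho/r)^{\beta_0}E(Dv,B_r)$, whose proof rests on a dichotomy between a degenerate and a non-degenerate regime). Your blow-up strategy is a legitimate alternative route in outline, but as written it has a genuine gap precisely in the non-degenerate regime, which is the only hard case. If $\tilde s_k=(s_k+|\xi_k|)/\lambda_k\to\infty$, the only a priori bound you have produced is $\sup_{B_{1/2}}|Dw_k|\leq c(1+\tilde s_k)\to\infty$, so the normalised maps $w_k$ admit no uniform gradient bound, no equicontinuity, and no limit $w_\infty$; the sentence ``an additional renormalisation around $\xi_k$ yields a limit solving a constant-coefficient linear elliptic equation'' presupposes exactly the missing compactness. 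What is needed there is an $L^\infty$ bound on the normalised excess, $\sup_{B_{1/2}}|Dv_k-\xi_k|\leq c\,\lambda_k$, which is essentially equivalent to the decay one is trying to prove and is classically obtained by differentiating the Euler equation, deriving a Caccioppoli inequality for $Dv-\xi$, and running De Giorgi/Moser iteration in the uniformly elliptic regime $|Dv-\xi_k|\ll s_k+|\xi_k|$. Note also that the degenerate case alone cannot suffice: the crude bound $\osc_{B_{\sigma}}Dv\leq c\sigma^{\beta}\sup|Dv|$ compares the left-hand side of \eqref{exc.smallness} to the full gradient, not to the excess, and these are not comparable when $\lambda_k\ll|\xi_k|$.

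A secondary but real inaccuracy: the rescaled density $\tilde h_k(\xi)=\lambda_k^{-p}h_k(\xi_k+\lambda_k\xi)$ does \emph{not} satisfy \eqref{ass.h} with shift parameter $\tilde s_k$. Its Hessian is comparable to $\bigl((s_k+|\xi_k+\lambda_k\xi|)/\lambda_k\bigr)^{p-2}$, whose degeneracy sits at $\xi=-\xi_k/\lambda_k$ rather than at the origin; for $p\neq2$ the ratio between this weight and $(\tilde s_k+|\xi|)^{p-2}$ is unbounded as $\tilde s_k\to\infty$, so the family is not ``a uniformly controlled subclass of \eqref{ass.h}'' exactly in the problematic regime (when $\tilde s_k$ stays bounded the argument can be patched by translating the degeneracy point, and there your conclusion does follow, indeed even without passing to the limit, since $\osc_{B_{\sigma_k}}Dw_k\leq c\sigma_k^{\beta}(1+\tilde s_k)\to0$ directly contradicts the lower bound $\bar\eps$). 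Finally, a minor point: $\bar\delta$ must be independent of $t\in[1,2]$, so the contradiction should be set up with a sequence $t_k$, which is harmless once uniform $L^\infty$ bounds are available. In short, the scheme identifies the right dichotomy but leaves unproved exactly the step that makes the cited theorem non-trivial.
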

A corollary of Lemma \ref{Lem.regolarity}, is the following ``density improvement Lemma'' first used in \cite{KMI1}; we refer to \cite[Proposition 2]{KMI1} for the proof (there it is $b=0$ but the proof applies verbatim to the case considered below).
\begin{Lemma}\label{Lemmatau}
Let $v\in W^{1,p}(B_R)$ be as in Lemma \ref{Lem.regolarity} and suppose that the two conditions
\begin{equation}\label{density.hp}
 \frac{\lambda}{\tilde A}\leq\left(\mean{B_{\sigma{\tau} R}}(b+|Dv|)^t\dx\right)^{1/t}\qquad\text{and}
 \qquad s+\sup_{B_{R/2}}|Dv|\leq \tilde A\lambda,
\end{equation}
hold for $t\in [1,p]$, $b\in[0,1]$, some $\sigma \in (0,1)$, $\tilde A\geq1$, $\lambda\geq1$ and with $\tau\in (0,\bar\tau]$, satisfying
\begin{equation}\label{sigma}
\bar\tau:=\frac12\frac{1}{[4\Cr{osc.est}\tilde A^2]^{1/\beta}}\in(0,1/8)\;.
\end{equation}
Here both $\beta\in(0,1)$ and $\Cr{osc.est}$ appear in Lemma \ref{Lem.regolarity}. Then
\[
\frac{\lambda}{4\tilde A}\leq b+|Dv|\qquad \text{holds in $B_{\tau R}$\;.}
\]
\end{Lemma}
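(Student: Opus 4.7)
The plan is to combine the two hypotheses in \eqref{density.hp} via the $C^{1,\beta}$-regularity of $v$ granted by Lemma \ref{Lem.regolarity}. The second hypothesis will feed into the oscillation estimate \eqref{osc.lambda}, forcing $Dv$ to be nearly constant on $B_{\tau R}$; the first one, a lower bound on an $L^t$-average, then delivers at least one point where $b+|Dv|\geq\lambda/\tilde A$, and the smallness of the oscillation will propagate this lower bound (essentially without loss) to all of $B_{\tau R}$.

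First I would observe that $B_{R/4}\subset B_{R/2}$, so the second condition in \eqref{density.hp} gives $s+\sup_{B_{R/4}}|Dv|\leq \tilde A\lambda$, which is exactly the assumption needed to invoke \eqref{osc.lambda} with $\tilde B=\tilde A$. Since $\bar\tau\in(0,1/8)$ by construction, for any $\tau\leq\bar\tau$ this yields $\osc_{B_{\tau R}}Dv\leq \Cr{osc.est}\tau^\beta\tilde A\lambda$. The precise value of $\bar\tau$ in \eqref{sigma} is tailored so that, after plugging in $\tau=\bar\tau$, the right-hand side is bounded by $\lambda/(4\tilde A)$; the factor $[4\Cr{osc.est}\tilde A^2]^{1/\beta}$ in the denominator of $\bar\tau$ is exactly what makes this algebraic check work.

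The second step is to extract a pointwise piece of information from the averaged lower bound. Since $Dv$ is continuous on $B_R$ by Lemma \ref{Lem.regolarity}, the $L^t$-mean on $B_{\sigma\tau R}$ is bounded above by the essential (hence genuine) supremum on $\overline{B_{\sigma\tau R}}\subset B_{\tau R}$; so the first hypothesis in \eqref{density.hp} furnishes a point $x_\ast\in\overline{B_{\sigma\tau R}}$ with $b+|Dv(x_\ast)|\geq \lambda/\tilde A$. Combining this with the oscillation bound of the previous step, the triangle inequality gives, for every $x\in B_{\tau R}$,
\[
b+|Dv(x)|\geq b+|Dv(x_\ast)|-\osc_{B_{\tau R}}Dv\geq \frac{\lambda}{\tilde A}-\frac{\lambda}{4\tilde A}=\frac{3\lambda}{4\tilde A},
\]
which is strictly stronger than the claimed bound $\lambda/(4\tilde A)$.

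I do not anticipate any substantial obstacle: the statement is essentially a quantitative bookkeeping of the interplay between the $C^{1,\beta}$-oscillation estimate \eqref{osc.lambda} and the averaged density hypothesis, and every ingredient is already packaged in Lemma \ref{Lem.regolarity}. The only genuinely delicate point is that the numerical choice of $\bar\tau$ in \eqref{sigma} must be tight enough to absorb the constant $\Cr{osc.est}$ and the factor $\tilde A$ appearing in \eqref{osc.lambda}; once this algebraic adjustment is in place, the rest of the argument is a one-line application of the triangle inequality.
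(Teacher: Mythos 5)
Your argument is correct and is essentially the argument the paper relies on: the paper gives no proof here but cites \cite[Proposition 2]{KMI1}, whose proof combines the oscillation estimate \eqref{osc.lambda} with the averaged lower bound in exactly this way (stated there in contrapositive form: a point where $b+|Dv|<\lambda/(4\tilde A)$ would force the mean over $B_{\sigma\tau R}$ below $\lambda/\tilde A$). Your numerical check of $\bar\tau$ and the use of continuity of $Dv$ to extract the point $x_\ast$ are both sound.
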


\section{Proof of Theorems \ref{main.thm}-\ref{second.thm}: beginning}\label{beg sec}
We begin the proof of Theorems \ref{main.thm}-\ref{second.thm}. Before going on, let us make a few preliminary remarks. We can define the new modulus of continuity
\eqn{tiome}
$$
\tilde \omega(\varrho) :=  \omega(\varrho) + \varrho^{\alpha\gamma}\,,
$$
$\alpha$ in \eqref{asss.f}$_4$, $\gamma$ in Theorem \ref{Holder} and notice that if $\omega(\cdot)$ is $1/2$-Dini continuous, then also $\tilde \omega(\cdot)$ is $1/2$-Dini continuous. Let then $\Omega '\Subset \Omega$ be an open subset. Using Theorem \ref{Holder} and a standard covering argument, we find that
$u \in C^{0,\gamma}(\Omega')\cap L^{\infty}(\Omega')$  for $\gamma\equiv \gamma (n,p,\nu, L)\in (0,1)$ and we have the inequality
\eqn{tiome2}
$$
\big[\osc_{B_\rho} u\big]^\alpha\leq c \tilde \omega(\varrho)
$$
for a constant $c\equiv c (n,p,\nu, L, \|\mu\|_{L^n},\|u\|_{L^p}(\Omega),\dist(\Omega', \partial \Omega))$, which is of later frequent use. Now since the results we are going to prove are local in nature, we can therefore assume without loss of generality that 
\eqn{globalass}
$$u \in C^{0,\alpha}(\Omega)\cap L^{\infty}(\Omega)$$
holds, getting rid of the dependence of the various constants on $\|u\|_{L^p(\Omega)}, \dist(\Omega', \partial \Omega)$, and just retaining the ones on 
$n,p,\nu, L, \|\mu\|_{L^n}$. 

We shall now consider a fixed ball $B_{2R}\equiv B_{2R}(x_0)\subset\Omega$ with radius $2R\leq R_H$, $R_H$ appearing in Theorem \ref{Holder}. The scheme of the section is now the following. In Sections \ref{Comparison maps}-\ref{improved.comparison2} we shall argue under the assumptions and with the notation of Theorem \ref{main.thm}. In particular $u\in W^{1,p}_\loc(\Omega)$ will always be a local minimiser of the functional \eqref{functional}, where the energy density $f(\cdot)$ satisfies assumptions \eqref{asss.f}, $\omega(\cdot)$ is $1/2$-Dini continuous, and where $\mu\in L(n,1)$; many of the results we are going to prove in the following pages, however, just need  the fact that $\omega$ is solely a concave modulus of continuity. Finally, in Section \ref{more regular sec}, we give the necessary modifications to treat Theorem \ref{second.thm}, where assumptions are slightly stronger.

\subsection{A first comparison estimate}\label{Comparison maps}
For $B_\rho\equiv B_\rho(x_0)\subset B_{2R}(x_0)$, let $v\in u+W^{1,p}_0(B_\rho)$ be the (unique) solution to the following Dirichlet problem:
\begin{equation}\label{funct.frozen}
\left\{
    \begin{array}{c}
   \displaystyle  v \to \min_w
\, \int_{B_\rho}f(x_0,{(u)}_{B_\rho},Dw) \,dx   
\\[10pt]
        w \in u+W^{1,p}_0(B_\rho)\;.
\end{array}\right.
\end{equation}
The rest of this section is devoted to the proof of a comparison estimate between $u$ and $v$, that is \rif{comp.definitive} below. 
Since $h(\xi):=f(x_0,{(u)}_{B_\rho},\xi)$ satisfies \eqref{asss.f}$_{1,2}$, then it also satisfies the growth conditions \eqref{ass.h} and hence $v$ is a solution to the Euler equation \eqref{euler.equation.h}; Lemma \ref{Lemma.V} thus yields 
\begin{multline}\label{int.V.f}
\mean{B_\rho}\big|V_s(Du)-V_s(Dv)\big|^2\dx\\\leq c\mean{B_\rho}
\big[f(x_0,{(u)}_{B_\rho},Du)-f(x_0,{(u)}_{B_\rho},Dv)\big]\dx\;.
\end{multline}
Moreover, by minimality of $v$, using \eqref{asss.f}$_2$
\begin{align}\label{energy.Dv}
\int_{B_\rho}(s+|Dv|\big)^p\dx&\leq \frac1\nu\int_{B_\rho}f(x_0,(u)_{B_\rho},Dv)\dx\nonumber\\
&\leq\frac1\nu \int_{B_\rho}f(x_0,(u)_{B_\rho},Du)\dx\leq \frac L\nu \int_{B_\rho}\big(s+|Du|\big)^p\dx\;.
\end{align}
Since $u$ is a minimiser of \eqref{functional}, we have, rearranging terms
\begin{equation}\label{pre.Sobol.mu}
\int_{B_\rho} \big[f(x,u,Du)-f(x,v,Dv)\big]\dx\leq\int_{B_\rho}(v-u)\mu\dx\;. 
\end{equation}
We proceed by giving an estimate for the right-hand side in the above display; we consider different cases. In the first one we assume that
$p\geq 2$ and $\max\{p,n\}>2$. Using H\"older's and Sobolev's inequalities (for the exponents described below) we have
\begin{align}\label{Sobol.mu+}
 \mean{B_\rho}|u-v||\mu| \dx&\leq \left(\mean{B_\rho}\left|\frac{u-v}{\rho}\right|^{(p')^*}\dx\right)^{1/(p')^*}\rho\left(\mean{B_\rho}|\mu|^q\dx\right)^{1/q}\nonumber\\
 &\leq c\left(\mean{B_\rho}|Du-Dv|^{p'}\dx\right)^{1/p'}\rho\left(\mean{B_\rho}|\mu|^q\dx\right)^{1/q}\;.
\end{align}
Here $(p')^*$ denotes the Sobolev's conjugate of $p'=p/(p-1)$ as follows:
\begin{equation}\label{q}
(p')^*:=\frac{np'}{n-p'}=\frac{np}{np-(n+p)}\qquad\Longrightarrow\qquad q:=[(p')^*]'=\frac{np}{n+p}\;.
\end{equation}
In the case $n=p=2$, on the other hand, we still obtain \eqref{Sobol.mu+} for instance with the choice $q=3/2$. The remaining case is when $1<p<2$; for
\begin{equation}\label{q-}
q:=(p^*)'=\frac{np}{np-(n-p)}
\end{equation}
we have 
\begin{align}\label{Sobol.mu-}
 \mean{B_\rho}|u-v||\mu| \dx&\leq  \left(\mean{B_\rho}\left|\frac{u-v}{\rho}\right|^{p^*}\dx\right)^{1/p^*}\rho\left(\mean{B_\rho}|\mu|^q\dx\right)^{1/q}\nonumber\\
 &\leq c\left(\mean{B_\rho}|Du-Dv|^{p}\dx\right)^{1/p}\rho\left(\mean{B_\rho}|\mu|^q\dx\right)^{1/q}.
\end{align}
Hence, setting $t:=\min\{p',p\}$, in any case we have
\eqn{sti1}
$$
 \mean{B_\rho}|u-v||\mu| \dx \leq c\left(\mean{B_\rho}|Du-Dv|^t\dx\right)^{1/t}\rho\left(\mean{B_\rho}|\mu|^q\dx\right)^{1/q},
$$
with $q$ being defined through \eqref{q}-\eqref{q-} according to the various cases considered, and $c\equiv c(n,p)$. 
The only thing we really mind here is that $q<n$ holds. We now look at \rif{int.V.f} and decompose the integrand in the right-hand side as follows:
\begin{eqnarray}\label{I-V}
&&\nonumber f(x_0,(u)_{B_\rho},Du)-f(x_0,(u)_{B_\rho},Dv)\\ 
&&\nonumber\qquad =\big[f(x_0,(u)_{B_\rho},Du)-f(x,u,Du)\big]\\
&&\nonumber\qquad \qquad  +\big[f(x,u,Du)-f(x,v,Dv)\big]\\
&&\nonumber\qquad \qquad +\big[f(x,v,Dv)-f(x_0,(v)_{B_\rho},Dv)\big]\\
&&\qquad \qquad +\big[f(x_0,(v)_{B_\rho},Dv)-f(x_0,(u)_{B_\rho},Dv)\big]\nonumber \\ && \qquad   =:I+II+III+IV\;.
\label{sti2}
\end{eqnarray}
Using \eqref{asss.f}$_4$ and \eqref{tiome2}, also recalling the definition of $\tilde \omega(\cdot)$ in \rif{tiome}, we start estimating
\begin{align}\label{sti3}
\nonumber \mean{B_\rho}I\dx &\leq \tilde L\,\mean{B_\rho}\big[\omega(\rho)+|u-(u)_{B_\rho}|^\alpha\big]\big(s+|Du|\big)^p\dx\\
&\leq  \tilde L\,\omega(\rho)\mean{B_\rho}\big(s+|Du|\big)^p\dx+\tilde L\,\big[\osc_{B_\rho} u\big]^\alpha\mean{B_\rho}\big(s+|Du|\big)^p\dx \nonumber \\
&\leq  c\tilde \omega(\rho)\mean{B_\rho}\big(s+|Du|\big)^p\dx\;. 
\end{align}
with $c\equiv c(p,\nu, L,\tilde L, \|\mu\|_{L^n})$. 
Similarly we have for the third term
\begin{align}\label{sti4}
\nonumber \mean{B_\rho}III\dx &\leq  \tilde L\,\omega(\rho)\mean{B_\rho}\big(s+|Dv|\big)^p\dx+\tilde L\,\big[\osc_{B_\rho} v\big]^\alpha\mean{B_\rho}\big(s+|Dv|\big)^p\dx\\
&\leq  \tilde L\,\omega(\rho)\mean{B_\rho}\big(s+|Dv|\big)^p\dx+\tilde L\,\big[\osc_{B_\rho} u\big]^\alpha\mean{B_\rho}\big(s+|Dv|\big)^p\dx \nonumber \\
&\leq  c\tilde \omega(\rho)\mean{B_\rho}\big(s+|Du|\big)^p\dx\;. 
\end{align}
and we have used that $\osc_{B_\rho}v\leq \osc_{B_\rho}u$, which is a basic consequence of the maximum principle 
\cite{GT}; we also used \eqref{energy.Dv}. As for the maximum principle, we are using here that $v$ solves the Euler-Lagrange equation $$\divo\, \partial f (x_0,(u)_{B_\rho}, Dv)=0\;.$$ Again we have 
\begin{align}\label{sti5}
\nonumber \mean{B_\rho}IV\dx&\leq \tilde L\mean{B_\rho}|(u)_{B_\rho}-(v)_{B_\rho}|^\alpha\big(s+|Dv|\big)^p\dx
\\ &\leq  c\tilde \omega(\rho)\mean{B_\rho}\big(s+|Du|\big)^p\dx
\end{align}
since, again by maximum principle we have
\begin{equation}\label{max.osc}
\inf_{B_\rho}u\leq v\leq\sup_{B_\rho}u\quad\text{in $B_\rho$}\qquad
 \Longrightarrow\qquad \|u-v\|_{L^\infty(B_\rho)}\leq \osc _{B_\rho} u\;. 
\end{equation}
Finally, using \rif{pre.Sobol.mu} and \rif{sti1} we have 
\eqn{sti6}
$$
\mean{B_\rho}II\dx\leq c\left(\mean{B_\rho}|Du-Dv|^t\dx\right)^{1/t}\rho\left(\mean{B_\rho}|\mu|^q\dx\right)^{1/q}\;.
$$
Connecting this to \rif{int.V.f} and in turn using estimates to \rif{sti2}-\rif{sti5} yields the comparison estimate we were looking for, this is in the following
\begin{Lemma}
Let $u\in W^{1,p}_\loc(\Omega)$ be a local minimiser of the functional \eqref{functional}, where the energy density $f(\cdot)$ satisfies assumptions \eqref{asss.f} and where $\mu\in L(n,1)$. Let $v\in u+W^{1,p}_0(B_\rho)$ be the solution to \trif{funct.frozen}. Then the estimate
\begin{multline}\label{comp.definitive}
\hspace{-3mm}\mean{B_\rho} \big|V_s(Du)-V_s(Dv)\big|^2\dx
\leq c\tilde \omega(\rho)\mean{B_{\rho}}\big(s+|Du|\big)^p\dx\\
+c\left(\mean{B_\rho}|Du-Dv|^t\dx\right)^{1/t}\rho\left(\mean{B_\rho}|\mu|^q\dx\right)^{1/q},
\end{multline}
holds for a constant depending on $n,p,\nu, L,\tilde L$ and $\|\mu\|_{L^n}$, and where
\eqn{fissati}
$$
t:=\min\{p,p'\}\qquad \mbox{and}\qquad 1<q\equiv q(n,p)<n\;.
$$
\end{Lemma}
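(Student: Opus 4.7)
My plan is to pivot through Lemma~\ref{Lemma.V}, which quantifies the gap $h(\xi_1)-h(\xi_2)-\langle\partial h(\xi_2),\xi_1-\xi_2\rangle$ in terms of the squared $V$-function distance. Setting $h(\xi):=f(x_0,(u)_{B_\rho},\xi)$, this map satisfies \eqref{ass.h}, so $v$ solves the Euler-Lagrange equation \eqref{euler.equation.h}; testing with $u-v\in W^{1,p}_0(B_\rho)$ together with Lemma~\ref{Lemma.V} immediately yields
\[
\mean{B_\rho}\big|V_s(Du)-V_s(Dv)\big|^2\dx \le c\mean{B_\rho}\big[h(Du)-h(Dv)\big]\dx.
\]
The task then reduces to bounding the right-hand side using the minimality of $u$ and the regularity assumptions on $f$.

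\textbf{Four-term decomposition and the frozen pieces.} To convert $h(Du)-h(Dv)$ into a form where the minimality of $u$ can be exploited, I would insert and subtract $f(x,u,Du)$, $f(x,v,Dv)$, and $f(x_0,(v)_{B_\rho},Dv)$, producing four pieces $I,II,III,IV$: here $I$ and $III$ unfreeze and re-freeze the $(x,u)$-dependence, $II=f(x,u,Du)-f(x,v,Dv)$ carries the minimality information, and $IV$ swaps the vertical averages inside $f(x_0,\cdot,Dv)$. For $I,III,IV$ the continuity assumption \eqref{asss.f}$_4$ produces factors $\omega(|x-x_0|)+|u-(u)_{B_\rho}|^\alpha$ (and the analogous $v$-versions) multiplying $(s+|Du|)^p$ or $(s+|Dv|)^p$. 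To trade $(s+|Dv|)^p$ for $(s+|Du|)^p$ I would use the trivial energy inequality $\int_{B_\rho}(s+|Dv|)^p\le(L/\nu)\int_{B_\rho}(s+|Du|)^p$ coming from minimality of $v$ and \eqref{asss.f}$_2$; to pass $\osc_{B_\rho}v$ and $\|u-v\|_{L^\infty(B_\rho)}$ back to $\osc_{B_\rho}u$ I would invoke the comparison/maximum principle applied to the quasilinear equation \eqref{euler.equation.h}. Finally, Theorem~\ref{Holder} together with the definition \eqref{tiome} of $\tilde\omega$ absorbs $\omega(\rho)+(\osc_{B_\rho}u)^\alpha$ into $c\tilde\omega(\rho)$, giving the first summand of \eqref{comp.definitive}.

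\textbf{The essential piece $II$.} Since $v\in u+W^{1,p}_0(B_\rho)$, the minimality of $u$ gives
\[
\int_{B_\rho}[f(x,u,Du)-f(x,v,Dv)]\dx \le \int_{B_\rho}(v-u)\mu\dx.
\]
To handle the right-hand side I would use H\"older and Sobolev-Poincar\'e applied to $u-v$, splitting on $p$: for $p\ge 2$ with $\max\{n,p\}>2$, pair $(u-v)/\rho$ in $L^{(p')^*}$ with $\mu$ in $L^q$ for $q=np/(n+p)$, which makes the Sobolev exponent on $Du-Dv$ equal to $p'$; for $1<p<2$ pair in $L^{p^*}$ and $L^q$ with $q=np/(np-(n-p))$, producing $|Du-Dv|^p$; in the borderline case $n=p=2$ take $q=3/2$. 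Consolidating the three cases yields the Lorentz-friendly bound with $t=\min\{p,p'\}$ and $q\in(1,n)$ depending only on $n,p$, which is precisely the second summand of \eqref{comp.definitive}.

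\textbf{Main obstacle.} The chief subtlety is that, when $\alpha<1$, $f$ is not differentiable in the $u$-slot, so one cannot write an Euler-Lagrange equation for $u$ itself; the term $II$ must be squeezed solely out of the variational inequality $\mathcal F(u,B_\rho)\le\mathcal F(v,B_\rho)$. This is precisely what forces the mixed factor $\bigl(\mean{B_\rho}|Du-Dv|^t\bigr)^{1/t}$ on the right-hand side of \eqref{comp.definitive}, rather than the cleaner quadratic $V$-function expression one would obtain from a pure PDE comparison; iterating this estimate efficiently, in later sections, will require carefully reabsorbing this factor through the regularity and decay lemmas of Section~2.4. The remaining steps are routine bookkeeping and the final constant will depend only on $n,p,\nu,L,\tilde L,\|\mu\|_{L^n}$, as required.
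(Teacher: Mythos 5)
Your proposal is correct and follows essentially the same route as the paper: Lemma~\ref{Lemma.V} combined with the Euler equation for the frozen functional tested with $u-v$, the identical four-term decomposition $I+II+III+IV$, the energy inequality and maximum principle to control the $v$-terms, absorption of $\omega(\rho)+(\osc_{B_\rho}u)^\alpha$ into $\tilde\omega(\rho)$, and the same H\"older--Sobolev case analysis (including $q=3/2$ when $n=p=2$) yielding $t=\min\{p,p'\}$ and $q\in(1,n)$. No gaps.
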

\subsection{A reverse inequality for minima} Here we prove the following reverse inequality for minimisers of the functional $\F$ defined in \eqref{functional}. For this we need less assumptions than those reported in Theorem \ref{main.thm}. 
\begin{Lemma}\label{reverse}
Let $u$ be a minimiser of \eqref{functional} under assumptions \trif{asss.f}$_2$ and $\mu \in L^n(\Omega)$. Then there exists a constant $c\equiv c (n,p,\nu, L)$ such that the following inequality holds whenever $B_{2\rho}\subset \Omega$ is ball:
\begin{eqnarray}
\nonumber \left( \mean{B_{\rho}} |Du|^p \dx\right)^{1/p} & \leq  & c\left(\mean{B_{2\rho}}(|Du|+s)^{p'}\dx\right)^{1/p'} \\ && \qquad +c\left(\rho^q \mean{
B_{2\rho}}|\mu|^q\dx\right)^{1/[q(p-1)]} \label{riversa}
\end{eqnarray}
where the exponent $q$ is as in \trif{fissati}. 
\end{Lemma}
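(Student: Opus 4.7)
My plan is a Caccioppoli--Sobolev--Young chain: use the minimality of $u$ against a cut-off variation to produce a Caccioppoli-type estimate, apply Sobolev--Poincar\'e to the data terms that arise, and finally use Young's inequality to absorb the gradient factors into the left-hand side.

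Concretely, I would fix radii $\rho \leq r_1 < r_2 \leq 2\rho$, a cutoff $\eta \in C_c^\infty(B_{r_2})$ with $\eta \equiv 1$ on $B_{r_1}$ and $|D\eta| \leq c/(r_2-r_1)$, set $\xi := (u)_{B_{2\rho}}$, and test the minimality of $u$ with $\varphi := -\eta^p(u-\xi)$. Using only the growth bound \eqref{asss.f}$_2$ together with elementary estimates on $D(u+\varphi) = (1-\eta^p)Du - p\eta^{p-1}(u-\xi)D\eta$, one gets
\[
\int_{B_{r_1}}|Du|^p\dx \leq c\int_{B_{r_2}\setminus B_{r_1}}|Du|^p\dx + \frac{c}{(r_2-r_1)^p}\int_{B_{r_2}}|u-\xi|^p\dx + cs^p|B_{r_2}| + c\int_{B_{r_2}}|u-\xi||\mu|\dx.
\]
A standard hole-filling followed by the iteration lemma of Giaquinta--Giusti removes the annular $|Du|^p$-term and gives
\[
\mean{B_\rho}|Du|^p\dx \leq c\mean{B_{2\rho}}\frac{|u-\xi|^p}{\rho^p}\dx + cs^p + \frac{c}{|B_{2\rho}|}\int_{B_{2\rho}}|u-\xi||\mu|\dx.
\]

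The exponent $p'$ in \eqref{riversa} is forced by the $\mu$-term. H\"older with exponents $q$ and $q'$ followed by Sobolev--Poincar\'e gives
\[
\frac{1}{|B_{2\rho}|}\int_{B_{2\rho}}|u-\xi||\mu|\dx \leq c\rho \Bigl(\mean{B_{2\rho}}|Du|^{\tilde q}\dx\Bigr)^{1/\tilde q}\Bigl(\mean{B_{2\rho}}|\mu|^q\dx\Bigr)^{1/q},
\]
where $\tilde q$ is the Sobolev predecessor of $q'$, i.e.\ $(\tilde q)^* = q'$. A direct computation shows that the choice of $q$ in \eqref{fissati} is tuned precisely so that $\tilde q = p'$ when $p \geq 2$ and $\tilde q = p$ when $1 < p < 2$. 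Young's inequality with exponents $p$ and $p'$ then produces $c(\rho^q\mean{B_{2\rho}}|\mu|^q\dx)^{p'/q}$ plus a gradient factor of the form $\eps(\mean{B_{2\rho}}|Du|^{p'}\dx)^{p/p'}$, absorbable into $\mean{B_\rho}|Du|^p\dx$ via Jensen. For the remaining Sobolev--Poincar\'e data term, the standard embedding yields $(\mean{B_{2\rho}}|Du|^{p_*}\dx)^{p/p_*}$ with $p_* = np/(n+p)$, dominated by $(\mean{B_{2\rho}}(|Du|+s)^{p'}\dx)^{p/p'}$ via H\"older whenever $p_* \leq p'$. Taking $p$-th roots and using $\rho^{p'/p} = \rho^{1/(p-1)}$ and $p'/q = p/[q(p-1)]$ yields \eqref{riversa}. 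For $1 < p < 2$ the lemma is trivial by H\"older since then $p \leq p'$.

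The main technical obstacle is the coordinated bookkeeping among the exponents $p, p', p_*, q, q', \tilde q$: the identity $\tilde q = p'$ is what lets $p'$ rather than the natural Sobolev predecessor $p_*$ appear on the right, and it is precisely the reason for the seemingly ad hoc definition of $q$ in \eqref{fissati}. In the range $p > 2n/(n-1)$ one has $p_* > p'$, so the Sobolev--Poincar\'e step on the Caccioppoli data produces a slightly worse exponent than $p'$; this can be handled by a Gehring-type self-improvement applied to the easier reverse H\"older with $p_*$ on the right.
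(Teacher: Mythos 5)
Your first part coincides with the paper's own Step 1: the cut-off variation, hole-filling, the iteration lemma, the Sobolev--Poincar\'e/H\"older treatment of the $\mu$-term with the tuned exponent $q$ from \eqref{fissati}, and Young's inequality all appear in the same way, and your reduction of the case $1<p\le 2$ to H\"older is also the paper's. One small slip: the gradient factor $\big(\mean{B_{2\rho}}|Du|^{p'}\dx\big)^{p/p'}$ produced by Young cannot be ``absorbed into $\mean{B_\rho}|Du|^p\dx$ via Jensen'' --- Jensen bounds it by the average of $|Du|^p$ over the \emph{larger} ball $B_{2\rho}$, which is not your left-hand side. This is harmless, though, because after taking $p$-th roots that factor is exactly the first term on the right of \eqref{riversa}, so you simply keep it.

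The real issue is the case $p'<p_*$, i.e.\ $p>2n/(n-1)$, which you correctly identify but then dispatch with ``Gehring-type self-improvement''; that is not the right mechanism. Gehring's lemma raises the exponent on the \emph{left} of a reverse H\"older inequality, whereas here you must \emph{lower} the exponent on the right from $p_*$ to $p'$. That downward self-improvement is a genuine known fact (essentially Iwaniec--Nolder), but it is precisely what the paper proves by hand in its Steps 2--3: rescale to $B_1\subset B_2$, cover $B_{\varrho_0}$ by balls of radius comparable to $\varrho_1-\varrho_0$, interpolate
\[
\|D\tilde u\|_{L^{p_*}(B_{\varrho_1})}\leq \|D\tilde u\|_{L^{p'}(B_{\varrho_1})}^{\theta}\,\|D\tilde u\|_{L^{p}(B_{\varrho_1})}^{1-\theta},
\]
use Young to split off a small multiple of $\|D\tilde u\|_{L^p(B_{\varrho_1})}^p$, and reabsorb it with the iteration Lemma \ref{iterazione}; one must also verify that the additive data terms ($s$ and the $\mu$-term, whose exponent $p'/q$ is at most one) sum correctly over the covering, which the paper handles via Lemma \ref{trivialem}. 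If you replace your appeal to Gehring by a citation of the downward self-improvement of reverse H\"older inequalities \emph{with additive right-hand sides}, your argument closes and is in substance identical to the paper's; as literally written, the last step would not go through.
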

\begin{proof} We can assume that $p>2$ otherwise $p\leq 2$ implies $p'\geq p$ and the statement is obviously verified; actually Lemma \ref{reverse} will be used only in the case $p>2$. The rest of the proof goes in three different steps. 

{\em Step 1: A preliminary estimate}. We start following the proof of \cite[Theorem 6.5]{G}. We take concentric balls 
$B_{\rho}\subset B_{\varrho_0} \subset B_{\varrho_1}\subset B_{2\rho}$ and a related cut-off function $\eta \in C^{\infty}_0(B_{(\varrho_0+\varrho_1)/2})$ such that
$\eta \equiv 1$ on $B_{\varrho_0}$ and $|D\eta| \leq c/(\varrho_1-\varrho_0)$. We then test the minimality of $u$ using the competitor 
$v:=u-\eta(u-(u)_{B_{2\rho}})$; using the growth conditions on $f(\cdot)$, we then have the following inequality:
\begin{eqnarray*}
 \int_{B_{\varrho_0}} |Du|^p \dx & \leq  &\tilde c \int_{B_{\varrho_1}\setminus B_{\varrho_0}} |Du|^p \dx 
 +  c\int_{B_{\varrho_1}}\left|\frac{u-(u)_{B_{2\rho}}}{\varrho_1-\varrho_0}\right|^p \dx \\&& \quad +\ c
\int_{B_{\varrho_1}} |u-(u)_{B_{2\rho}}||\mu| \dx+c s^p\varrho_1^n\;.
\end{eqnarray*}
By ``filling the hole", that is by adding to both sides of the previous inequality the integral
\[
\tilde c\int_{B_{\varrho_0}} |Du|^p \dx
\]
we come to 
\begin{eqnarray*}
 \int_{B_{\varrho_0}} |Du|^p \dx & \leq  &\theta \int_{B_{\varrho_1}
} |Du|^p \dx +\frac{c}{(\varrho_1-\varrho_0)^p} \int_{B_{\varrho_1}} |u-(u)_{B_{2\rho}}|^p \dx \\&& \qquad +c
\int_{B_{\varrho_1}} |u-(u)_{B_{2\rho}}||\mu| \dx +c s^p\rho^n
\end{eqnarray*}
for $\theta \equiv \theta (n,p,\nu, L)=\tilde c/(\tilde c+1) \in (0,1)$ and again $c \equiv c (n,p,\nu, L)\geq 1$. We can 
therefore apply the iteration Lemma \ref{iterazione} below in order to obtain
\[
 \mean{B_{\rho}} |Du|^p \dx  \leq  c \mean{B_{2\rho}}\left|\frac{u-(u)_{B_{2\rho}}}{2\rho}\right|^p \dx+c
\mean{B_{2\rho}} |u-(u)_{B_{2\rho}}||\mu| \dx+cs^p\;. 
\]
We now estimate the two integrals appearing on the right-hand side of the above inequality. The last term can be estimated exactly as the integral appearing on the right-hand side of \rif{pre.Sobol.mu}. Therefore, proceeding as in \rif{Sobol.mu+}-\rif{sti1}, and with the notation wooed there about $q$, we have 
\begin{eqnarray*}
 \mean{B_{2\rho}}|u-(u)_{B_{2\rho}}||\mu| \dx   & \leq  &c\left(\mean{B_{2\rho}}|Du|^{p'}\dx\right)^{1/p'}\rho\left(\mean{B_{2\rho}}|\mu|^q\dx\right)^{1/q}\\& \leq  &c\left(\mean{B_{2\rho}}|Du|^{p'}\dx\right)^{p/p'}+c\rho^{p'}\left(\mean{B_\rho}|\mu|^q\dx\right)^{p'/q}\;.
\end{eqnarray*}
Moreover, using Sobolev embedding theorem we find  
\[\mean{B_{2\rho}} \left|\frac{u-(u)_{B_{2\rho}}}{2\rho}\right|^p \dx \leq c\left( \mean{B_{2\rho}} |Du|^{p_*} \dx\right)^{p/p_*}
\]
with, as usual, $p_*=np/(n+p)$. 
Connecting the content of the last three displays yields 
\begin{eqnarray}
\nonumber \left( \mean{B_{\rho}} |Du|^p \dx\right)^{1/p} &\leq & c\left( \mean{B_{2\rho}} |Du|^{p_*} \dx\right)^{1/p_*}
+c\left(\mean{B_{2\rho}}|Du|^{p'}\dx\right)^{1/p'} \\ 
&&\quad + c\left(\rho^q \mean{B_{2\rho}}|\mu|^q\dx\right)^{1/[q(p-1)]}+cs\;.\label{riv3}
\end{eqnarray}
The previous inequality actually holds for any ball $B_{2\rho} \subset \Omega$. We now distinguish two cases. 
The first one is when $p_* \leq p'$, and in this case we have finished since \rif{riversa} follows immediately from the inequality in the above display and H\"older's inequality. The other case is when $p'<p_*$ and in order to deal with it we have to use another interpolation argument. This needs a preliminary scaling procedure and this facts are developed in the next two steps. 

{\em Step 2: Rescaling}. Here we recall a standard rescaling procedure. Indeed, for a ball $B_{2\rho } \equiv B_{2\rho }(x_0)\subset \Omega$, if we define the rescaled functions
\eqn{defisca1}
$$
\tilde u(x):= \frac{u(x_0+\rho x)}{\rho }\;, \qquad \qquad \tilde \mu(x):=\rho  \mu(x_0+\rho x)
$$
for $x \in B_1$ and the integrand
\eqn{defisca2}
$$
\tilde f(x, v, \xi) := f(x_0+\rho x, \rho v, \xi)
$$
for 
$ (x, v, \xi)\in B_2 \times \er\times \ern$, it is not difficult to prove that $\tilde u$ is a local minimiser of the functional
\[
w \mapsto \int_{B_2} \tilde f(x,w,Dw)\dx+\int_{B_2} w\tilde\mu\dx\;.
\]
This functional satisfies the same assumptions of the original one considered in \rif{functional}. We will then prove the inequality 
\begin{eqnarray}
\left( \mean{B_1} |D\tilde u|^p \dx\right)^{1/p}& \leq &  c\left(\mean{B_2}(|D\tilde u|+s)^{p'}\dx\right)^{1/p'}\nonumber  \\ && \qquad +c\left( \mean{
B_2}|\tilde \mu|^q\dx\right)^{1/[q(p-1)]} \label{riversas}
\end{eqnarray}
eventually recovering \rif{riversa} by scaling back to $u$. From now on we can therefore reduce to prove \rif{riversas} and this will be done in the third and final step. 

{\em Step 3: Proof of \trif{riversas}.} Inequality \rif{riv3} can be rewritten as 
\begin{eqnarray}
\nonumber \int_{B_{\rho}} |D\tilde u|^p \dx &\leq & \frac{c}{\rho^{p}}\left( \int_{B_{2\rho}} |D\tilde u|^{p_*} \dx\right)^{p/p_*}
+\frac{c}{\rho^{n(p-2)}}\left(\int_{B_{2\rho}}|D\tilde u|^{p'}\dx\right)^{p/p'} \\&& \qquad
+ \frac{c}{\rho^{n(p'/q-1)-p'}}\left( \int_{B_{2\rho}}|\tilde \mu|^q\dx\right)^{p'/q}+cs^p\rho^n\label{ppp}
\end{eqnarray}
that holds whenever $B_{2\rho} \subset B_2$. 
We now consider again concentric balls $B_1\subset B_{\varrho_0} \subset B_{\varrho_1}\subset B_2$ and we set $\rho := (\varrho_1-\varrho_0)/4$ and take a covering of $B_{\varrho_0}$ with a family of balls 
$\{B_{\rho}(y_i)\}_{i\in \{1, \ldots, H\}}$ made of at most $H\approx c(n)\rho^{-n}$ balls, such that 
$y_i\in B_{\varrho_0}$ for every $i\in \{1, \ldots, H\}$. Here $c(n)$ depends only on $n$. Moreover, the covering can be taken in such a way that 
each (doubled) ball $B_{2\rho}(y_i)$ touches at most $8^n$ other similar (doubled) balls from the same family (finite intersection property). Summing up therefore yields
\begin{eqnarray}
&&\nonumber  \int_{B_{\varrho_0}} |D\tilde u|^p \dx \leq  \sum_{i} \int_{B_{\rho}(y_i)} |D\tilde u|^p \dx\\ &&\nonumber 
\qquad \leq  \frac{c}{\rho^{p}}\sum_{i}\left(\int_{B_{2\rho}(y_i)}|D\tilde u|^{p_*} \dx\right)^{p/p_*}
+\frac{c}{\rho^{n(p-2)}}\sum_{i}\left(\int_{B_{2\rho}(y_i)}|D\tilde u|^{p'}\dx\right)^{p/p'} \\&& \qquad
\qquad+ \frac{c}{\rho^{n(p'/q-1)-p'}}\sum_{i}\left( \int_{B_{2\rho}(y_i)}|\tilde \mu|^q\dx\right)^{p'/q}+cHs^p\rho^n \nonumber 
\\ && \nonumber \qquad \leq  \frac{c}{\rho^{p}}\left(\int_{B_{\varrho_1}}|D\tilde u|^{p_*} \dx\right)^{p/p_*}
+\frac{c}{\rho^{n(p-2)}}\left(\int_{B_{\varrho_1}}|D\tilde u|^{p'}\dx\right)^{p/p'} \\&& 
\qquad \qquad+ {c}{\rho^{p'}}\left( \int_{B_{\varrho_1}}|\tilde \mu|^q\dx\right)^{p'/q}+cs^p\;.\label{insert}
\end{eqnarray}
Notice that to perform the estimation for the last sum in the above display we have made use of the elementary Lemma \ref{trivialem} below, since we are in the situation where  $p'<p_*=q$. Moreover, we can use the following interpolation inequality:
\begin{equation}
\label{eqn:interp}
{ \|D\tilde u\|}_{L^{p_*}(B_{\varrho_1})} \leq  \|D\tilde u\|_{L^{p'}(B_{\varrho_1})}^{\theta}  \|D\tilde u\|_{L^{p}(B_{\varrho_1})}^{1-\theta}\;,
\end{equation}
that holds for $\theta \in (0,1)$ such that
\eqn{theta}
$$
\frac{1}{p_*} = \frac{\theta}{p'}+\frac{1-\theta}{p}\;.$$
Inserting \rif{eqn:interp} in \rif{insert} yields 
\begin{eqnarray*}
\nonumber \int_{B_{\varrho_0}} |D\tilde u|^p \dx &\leq & \frac{c}{\rho^{p}}\left( \int_{B_{\varrho_1}} |D\tilde u|^{p'} \dx\right)^{\theta p/p'}
\left( \int_{B_{\varrho_1}} |D\tilde u|^{p} \dx\right)^{1-\theta}\\ && \qquad
+\frac{c}{\rho^{n(p-2)}}\left(\int_{B_{\varrho_1}}|D\tilde u|^{p'}\dx\right)^{p/p'} \\&& \qquad
+ {c}{\rho^{p'}}\left( \int_{B_{\varrho_1}}|\tilde \mu|^q\dx\right)^{p'/q}+cs^p\;.
\end{eqnarray*}
By using Young's inequality and recovering the full notation we then find, after a few elementary manipulations
\begin{multline*}
 \left(\int_{B_{\varrho_0}} |D\tilde u|^p \dx\right)^{1/p} \leq \frac{1}{2}\left(\int_{B_{\varrho_1}} |D\tilde u|^p \dx\right)^{1/p}
\\
+\frac{c}{(\varrho_1-\varrho_0)^{n(p-2)/p}}\left(\int_{B_{\varrho_1}}|D\tilde u|^{p'}\dx\right)^{1/p'}+ c\left( \int_{B_{\varrho_1}}|\tilde \mu|^q\dx\right)^{1/[q(p-1)]}+cs.
\end{multline*}
The latter inequality holds whenever $1\leq \varrho_0< \varrho_1 \leq 2$ for a constant $c\equiv c (n,p,\nu, L)$. At this point \rif{riversas} follows applying Lemma \ref{iterazione} below with $\rho_0=1$ and $\rho_1=2$. 
\end{proof}
\begin{Lemma} \label{iterazione}
Let $h:[\rho_0,\rho_1] \to \mathbb R $ be a nonnegative and bounded function, and let $\theta \in (0,1)$ and 
$A,B\geq 0$, $p>0$ be numbers. Assume that
\[ h(\varrho_0) \leq\theta h(\varrho_1)+ \frac{A}{(\varrho_1-\varrho_0)^{\gamma_1}}+
 B
\]
holds for every choice of $\varrho_0$ and $\varrho_1$ such that 
 $\rho_0 \leq
\varrho_0< \varrho_1\leq \rho_{1}$. Then the following inequality holds with $c\equiv c(\theta , \gamma_1, \gamma_2 )$:
\[ h(\rho_0) \leq  \frac{cA}{(\rho_1-\rho_0)^{\gamma_1}}
+cB\;.
\]
\end{Lemma}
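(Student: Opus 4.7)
The plan is to apply the classic iteration scheme from Giaquinta's book: construct a geometrically spaced sequence of radii inside $[\rho_0,\rho_1]$, apply the hypothesized inequality to each consecutive pair, and sum the resulting series. The boundedness of $h$ is used only to kill a tail term at the end.

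First I would fix a parameter $\tau\in(0,1)$ satisfying $\theta/\tau^{\gamma_1}<1$; since $\theta\in(0,1)$, any $\tau$ with $\theta^{1/\gamma_1}<\tau<1$ will do. Then set
\[
\varrho_i := \rho_0 + (1-\tau^i)(\rho_1-\rho_0), \qquad i\in\mathbb N_0,
\]
so that $\varrho_0$ coincides with the original $\rho_0$, the sequence $\{\varrho_i\}$ is increasing with $\varrho_i\to\rho_1$, and
\[
\varrho_{i+1}-\varrho_i=(1-\tau)\tau^i(\rho_1-\rho_0).
\]
Applying the hypothesis to each consecutive pair $(\varrho_i,\varrho_{i+1})\subset[\rho_0,\rho_1]$ gives
\[
h(\varrho_i)\leq \theta\, h(\varrho_{i+1}) + \frac{A}{(1-\tau)^{\gamma_1}\tau^{i\gamma_1}(\rho_1-\rho_0)^{\gamma_1}} + B.
\]

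Iterating this bound $k$ times starting from $i=0$ produces
\[
h(\rho_0)\leq \theta^k h(\varrho_k) + \frac{A}{(1-\tau)^{\gamma_1}(\rho_1-\rho_0)^{\gamma_1}}\sum_{i=0}^{k-1}\left(\frac{\theta}{\tau^{\gamma_1}}\right)^{\!i} + B\sum_{i=0}^{k-1}\theta^i.
\]
Since $h$ is bounded and $\theta<1$, the first term tends to zero as $k\to\infty$; by the choice of $\tau$ the first geometric series has ratio strictly less than $1$, and the second series converges because $\theta<1$. Letting $k\to\infty$ therefore yields the claimed estimate
\[
h(\rho_0)\leq \frac{cA}{(\rho_1-\rho_0)^{\gamma_1}}+cB,
\]
with a constant $c$ depending only on $\theta$ and $\gamma_1$ (the $\gamma_2$ appearing in the statement's dependence list seems to be a typographical artifact, as only one exponent enters the hypothesis).

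There is no genuine obstacle here; the only delicate point is that $\tau$ must be chosen strictly between $\theta^{1/\gamma_1}$ and $1$ to make both the summability ratio $\theta/\tau^{\gamma_1}$ and the spacing ratio $\tau$ lie in $(0,1)$. This choice is what forces the dependence of $c$ on $\theta$ and $\gamma_1$, and no other subtlety appears in the argument.
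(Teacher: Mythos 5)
Your proof is correct and is exactly the classical iteration argument (the paper states this lemma without proof, as a standard fact from the direct methods literature, e.g.\ \cite{G}): the geometric choice $\varrho_i=\rho_0+(1-\tau^i)(\rho_1-\rho_0)$ with $\theta^{1/\gamma_1}<\tau<1$, iteration, and summation of the two convergent geometric series, with boundedness of $h$ killing the term $\theta^k h(\varrho_k)$. You are also right that the $\gamma_2$ in the stated dependence of $c$ is a leftover from the more general two-exponent version of the lemma and plays no role here.
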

The next lemma is a consequence of the concavity of the function $t \to t^{\gamma}$ for $\gamma \leq 1$. 
\begin{Lemma}\label{trivialem} Let $\{a_i\}_{1\leq i \leq H}$ be non-negative numbers and $\gamma\leq 1$, then the following inequality holds:
\eqn{simpleine}
$$
\sum_{i=1}^H a_i^{\gamma} \leq H^{1-\gamma} \left(\sum_{i=1}^H a_i\right)^{\gamma}\;.
$$
\end{Lemma}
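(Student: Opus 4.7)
The plan is to deduce the inequality from Jensen's inequality applied to the concave function $\phi(t) := t^{\gamma}$ on $[0,\infty)$, which is exactly the hint given in the excerpt. We may of course restrict to $\gamma \in (0,1]$, since for $\gamma = 0$ the bound is trivial (both sides equal $H$ when the $a_i$ are positive, and for generic non-negative $a_i$ the statement has to be read with the convention $0^0 = 1$, in which case it still holds) and for $\gamma = 1$ it is an equality.

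Having fixed $\gamma \in (0,1)$, I would first normalize: divide the sum on the left-hand side by $H$ so as to obtain a genuine arithmetic mean, and then apply Jensen's inequality in the form
\[
\frac{1}{H}\sum_{i=1}^{H} \phi(a_i) \leq \phi\Bigl(\frac{1}{H}\sum_{i=1}^{H} a_i\Bigr),
\]
valid because $\phi$ is concave on $[0,\infty)$. Substituting $\phi(t) = t^{\gamma}$ and multiplying through by $H$ gives
\[
\sum_{i=1}^{H} a_i^{\gamma} \leq H \cdot \Bigl(\frac{1}{H}\sum_{i=1}^{H} a_i\Bigr)^{\gamma} = H^{1-\gamma}\Bigl(\sum_{i=1}^{H} a_i\Bigr)^{\gamma},
\]
which is precisely \eqref{simpleine}.

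Alternatively, and perhaps more transparently, one may derive the same bound directly from H\"older's inequality with conjugate exponents $1/\gamma$ and $1/(1-\gamma)$:
\[
\sum_{i=1}^{H} a_i^{\gamma}\cdot 1 \leq \Bigl(\sum_{i=1}^{H} (a_i^{\gamma})^{1/\gamma}\Bigr)^{\gamma}\Bigl(\sum_{i=1}^{H} 1^{1/(1-\gamma)}\Bigr)^{1-\gamma} = \Bigl(\sum_{i=1}^{H} a_i\Bigr)^{\gamma} H^{1-\gamma}.
\]
There is no real obstacle here: the statement is a one-line consequence of concavity (or of H\"older), and the only thing one has to be a little careful about is the borderline value $\gamma = 1$ (equality) and, should one wish to include it, $\gamma = 0$ with the convention $0^{0}=1$.
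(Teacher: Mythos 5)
Your proof is correct and follows essentially the same route as the paper, which introduces the lemma precisely as ``a consequence of the concavity of the function $t\to t^{\gamma}$ for $\gamma\leq 1$'' — your Jensen argument is exactly that concavity made explicit, and the H\"older variant is an equivalent repackaging. The only minor remark is that the inequality genuinely requires $\gamma\geq 0$ (it fails, e.g., for $\gamma=-1$), so your restriction to $\gamma\in(0,1]$ is the right reading of the statement and matches the paper's application with $\gamma=p'/q\in(0,1)$.
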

\subsection{A second comparison  in the degenerate case} \label{improved.comparison}
Again with $B_{2R}\equiv B_{2R}(x_0)\subset\Omega$ fixed as described at the beginning of Section \ref{beg sec}, that is with $2R \leq R_H\leq 1$, we consider a sequence of shrinking balls 
\eqn{setting}
$$\{B_j\}_{j\in\N_0},\quad B_0=B_R,\quad B_{j+1}=\delta B_j,\quad R_j :=\delta^jR, \quad j\in\N_0\;,$$
 for some $\delta\in(0,1/8)$ which we shall also fix later on; clearly $B_j=B_{\delta^j R}=B_{R_j}$. 
 We shall moreover denote
 $$
 \tb := \frac{1}{2}B_j\;.
 $$
 Notice the inclusions 
 $$
 \ldots \tilde {B_j} \subset B_j \subset \tilde B_{j-1} \subset  B_{j-1} \ldots\subset B_{R}
 $$
 that follow since $\delta \leq 1/4$. 
 Accordingly, for every integer $j \geq 0$ we define $v_j\in u+W^{1,p}_0(\tilde B_j)$ be the solution to the following Dirichlet problem:
\begin{equation}\label{funct.frozenj}
\left\{
    \begin{array}{c}
   \displaystyle  v_j \to \min_w
\, \int_{\tb}f(x_0,{(u)}_{\tb},Dw) \,dx   
\\[10pt]
        w \in u+W^{1,p}_0(\tilde B_j)\;.
\end{array}\right.
\end{equation}
These are problems of the type \rif{funct.frozen}. 
We start from the case $p\geq2$; in this case we have the following estimate:
\begin{Lemma}\label{Lemma:comparison.I1} Let $u\in W^{1,p}_\loc(\Omega)$ be a local minimiser of the functional \eqref{functional}, where the energy density $f(\cdot)$ satisfies assumptions \eqref{asss.f} with $p \geq 2$ and where $\mu\in L(n,1)$. Let $v_j\in u+W^{1,p}_0(\tilde B_j)$ be the solution to \trif{funct.frozenj}. 
Suppose that, for some positive $j\in\N$ and some $\lambda\geq1$, there holds
\begin{equation}\label{ass.mean0}
\left(\mean{B_{j-1}}\big(s+|Du|\big)^{p'}\dx\right)^{1/p'}+\left(\mean{B_{j}}\big(s+|Du|\big)^{p'}\dx\right)^{1/p'}\leq
 \lambda
\end{equation}
together with
\begin{equation}\label{ass.meas}
R_{j}\left(\mean{B_{j}}|\mu|^q\dx\right)^{1/q}\leq \Big(\frac\lambda A\Big)^{p-1},
\quad R_{j-1}\left(\mean{B_{j-1}}|\mu|^q\dx\right)^{1/q}\leq \Big(\frac\lambda A\Big)^{p-1}
\end{equation} 
for some $A\geq1$, where $1<q<n$ comes from \trif{fissati}. Then
\begin{equation}\label{compa.k}
\mean{\tilde B_{k}}|Du-Dv_{k}|^p\dx\leq  \Cl[c]{c:compa.k}\Bigl[\tilde \omega(R_{k})+A^{-p}\Bigr]\lambda^p
\end{equation}
holds for $k=j,j-1$ and a constant $\Cr{c:compa.k}$ depending on $n,p,\nu, L,\tilde L,\|\mu\|_{L^n}$. If moreover
\begin{equation}\label{ass.lambda}
\frac\lambda{B}\leq |Dv_{j-1}|\leq B\lambda \quad\text{in $B_j$}
\end{equation}
holds for some other constant $B\geq1$, then
\begin{multline} \label{compa.I1}
\left(\mean{\tilde B_j}|Du-Dv_j|^{p'}\dx\right)^{1/p'}\\ \leq \Cl[c]{c:compa.I1}
[\tilde \omega(R_j)]^{1/2}\lambda+\Cr{c:compa.I1}\lambda^{2-p}R_{j-1}
\left(\mean{B_{j-1}}|\mu|^q\dx\right)^{1/q}
\end{multline}
holds with $\Cr{c:compa.I1}$ depending on $n,p,\nu, L,\tilde L,\|\mu\|_{L^n},\delta, B$.
\end{Lemma}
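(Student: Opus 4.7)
The two estimates will be proved in order: \eqref{compa.I1} relies on \eqref{compa.k}, so we start there.

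\emph{Proof of \eqref{compa.k}.} For $k\in\{j,j-1\}$ we apply the basic comparison estimate \eqref{comp.definitive} on $\tilde B_k$, where $v_k$ is the corresponding frozen minimiser. Since $p\geq 2$, the inequality \eqref{equiv.V.p} allows the left-hand side to be replaced by $\mean_{\tilde B_k}|Du-Dv_k|^p\dx$. The first term on the right-hand side involves $\mean_{\tilde B_k}(s+|Du|)^p\dx$, which we control as follows: since $2\tilde B_k=B_k\subset\Omega$, the reverse inequality of Lemma~\ref{reverse} applied on the pair $(\tilde B_k,B_k)$, combined with the hypotheses \eqref{ass.mean0} and \eqref{ass.meas}, delivers the bound $\mean_{\tilde B_k}(s+|Du|)^p\dx\leq c\lambda^p$. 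In the second term on the right-hand side of \eqref{comp.definitive} we have the factor $X:=\bigl(\mean_{\tilde B_k}|Du-Dv_k|^{p'}\bigr)^{1/p'}$; since $p'\leq p$, H\"older gives $X\leq Z$, where $Z:=\bigl(\mean_{\tilde B_k}|Du-Dv_k|^{p}\bigr)^{1/p}$. Writing $Y:=R_k(\mean_{\tilde B_k}|\mu|^q)^{1/q}$, assumption \eqref{ass.meas} gives $Y\leq c(\lambda/A)^{p-1}$, so by Young with conjugate exponents $p,p'$, and exploiting the identity $(p-1)p'=p$,
$$
ZY\leq \tfrac12 Z^p+cY^{p'}\leq \tfrac12 Z^p+c(\lambda/A)^{p}.
$$
Reabsorbing $\tfrac12 Z^p$ into the left-hand side yields \eqref{compa.k}.

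\emph{Proof of \eqref{compa.I1}.} Here the strategy is to enter the linear regime using the non-degeneracy \eqref{ass.lambda} of $Dv_{j-1}$ on the inner ball. Since $|Dv_{j-1}|\geq \lambda/B$ pointwise on $\tilde B_j\subset B_j$, the lower half of \eqref{equiv.V} (with pivot $Dv_{j-1}$) yields
$$
|Du-Dv_{j-1}|^{2}\leq c\,B^{p-2}\lambda^{2-p}\,\bigl|V_s(Du)-V_s(Dv_{j-1})\bigr|^{2}\qquad\text{on }\tilde B_j.
$$
Integrating, enlarging the ball from $\tilde B_j$ to $\tilde B_{j-1}$ at the cost of a factor $\delta^{-n}$, and invoking \eqref{comp.definitive} at scale $\tilde B_{j-1}$ (where $v_{j-1}$ is the frozen minimiser) gives a bound by $c\lambda^{2-p}\bigl[\tilde\omega(R_{j-1})\lambda^p + X_{j-1}\cdot M\bigr]$, where $X_{j-1}:=\bigl(\mean_{\tilde B_{j-1}}|Du-Dv_{j-1}|^{p'}\bigr)^{1/p'}$ and $M:=R_{j-1}(\mean_{B_{j-1}}|\mu|^q)^{1/q}$. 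Crucially, \eqref{compa.k} applied at $k=j-1$ together with H\"older sharpens the coarse bound $X_{j-1}\leq c\lambda$ into $X_{j-1}\leq c[\tilde\omega(R_{j-1})+A^{-p}]^{1/p}\lambda$. Using H\"older $(\mean|\cdot|^{p'})^{2/p'}\leq \mean|\cdot|^2$ (valid since $p'\leq 2$) and a careful Young step to rebalance the product $\lambda^{2-p}X_{j-1}M$ into $\tilde\omega\,\lambda^2+\lambda^{2(2-p)}M^{2}$, taking the square root and recalling $\tilde\omega(R_{j-1})\leq c\tilde\omega(R_j)/\delta^{\alpha\gamma}$ gives the bound $\bigl(\mean_{\tilde B_j}|Du-Dv_{j-1}|^{p'}\bigr)^{1/p'}\leq c[\tilde\omega(R_j)]^{1/2}\lambda+c\lambda^{2-p}M$. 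Finally, we pass from $Dv_{j-1}$ to $Dv_j$ by the triangle inequality $|Du-Dv_j|\leq |Du-Dv_{j-1}|+|Dv_{j-1}-Dv_j|$ on $\tilde B_j$; the extra term is controlled by a stability argument comparing the two frozen Dirichlet problems defining $v_j$ and $v_{j-1}$: they share the same class of Euler equations but differ in the $u$-freezing constant (producing an $\omega(R_{j-1})$-error of the correct order via \eqref{asss.f}$_4$) and in boundary values on $\partial\tilde B_j$, where $v_j=u$ while $v_{j-1}|_{\partial \tilde B_j}$ is free, controlled by the already-established bound on $Du-Dv_{j-1}$.

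\emph{Main obstacle.} The delicate point is the Young step: producing exactly the power $\lambda^{2-p}M$ linear in the $\mu$-term, rather than the weaker $\lambda^{(3-p)/2}M^{1/2}$ that a na\"ive $\sqrt{a+b}\leq\sqrt{a}+\sqrt{b}$ would yield. The key is not to settle for the coarse estimate $X_{j-1}\leq c\lambda$, but to insert the quantitative smallness from \eqref{compa.k} on the larger ball $\tilde B_{j-1}$ and then to apply Young with exponents $(2,2)$ to the product $X_{j-1}\cdot(\lambda^{2-p}M)$ after the pointwise non-degeneracy has converted $V_s$-differences into plain $L^2$-differences. The appearance of $[\tilde\omega(R_j)]^{1/2}$ in place of $\tilde\omega(R_j)$ reflects the structural square-root loss passing from $f$ to $\partial f$ that motivates the $1/2$-Dini assumption \eqref{Dini1/2}.
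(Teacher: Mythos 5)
Your proof of \eqref{compa.k} is correct and is essentially the paper's argument: the reverse inequality of Lemma \ref{reverse} upgrades \eqref{ass.mean0}--\eqref{ass.meas} to $\mean{\tilde B_k}(s+|Du|)^p\dx\le c\lambda^p$, and then \eqref{comp.definitive}, \eqref{equiv.V.p}, H\"older and Young with exponents $(p,p')$ together with the identity $(p-1)p'=p$ give the claim after reabsorption.

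The proof of \eqref{compa.I1}, however, has a genuine gap at exactly the point you flag as delicate. After your pointwise non-degeneracy step and \eqref{comp.definitive} on $\tilde B_{j-1}$ you are left with the product $\lambda^{2-p}X_{j-1}M$, where $X_{j-1}$ lives on the \emph{larger} ball $\tilde B_{j-1}$ while the quantity being estimated lives on $\tilde B_j$; hence $X_{j-1}$ cannot be reabsorbed. The only a priori information on $X_{j-1}$ is the one you quote from \eqref{compa.k}, namely $X_{j-1}\le c\,[\tilde\omega(R_{j-1})+A^{-p}]^{1/p}\lambda$. Feeding this into Young with exponents $(2,2)$ produces $c\,[\tilde\omega(R_{j-1})]^{2/p}\lambda^2+c\,A^{-2}\lambda^2+c\,\lambda^{2(2-p)}M^2$, and after the square root the terms $[\tilde\omega(R_{j-1})]^{1/p}\lambda$ and $A^{-1}\lambda$. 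For $p>2$ one has $1/p<1/2$, so $[\tilde\omega]^{1/p}$ is \emph{larger} than $[\tilde\omega]^{1/2}$ for small radii (it would force $1/p$-Dini continuity rather than $1/2$-Dini); and the term $A^{-1}\lambda$ does not appear in \eqref{compa.I1} at all --- it cannot be hidden in $\lambda^{2-p}M$ (take $\mu\equiv0$), and it would ruin the summability needed in the subsequent excess iteration. No choice of exponents in Young rebalances $\lambda^{2-p}X_{j-1}M$ into $\tilde\omega\,\lambda^2+\lambda^{2(2-p)}M^2$ from the information available. The same $[\tilde\omega]^{1/p}$ loss reappears in your final triangle-inequality passage from $Dv_{j-1}$ to $Dv_j$: the difference $Dv_j-Dv_{j-1}$ is only controlled in $L^{p}$ at that rate via \eqref{compa.k}, and the ``stability argument'' you invoke is not carried out.

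The paper circumvents both problems by a different algebraic arrangement. Assumption \eqref{ass.lambda} is used only to insert the factor $\lambda^{p'(2-p)}|Dv_{j-1}|^{p'(p-2)}$ in front of $|Du-Dv_j|^{p'}$ (keeping $v_j$, not $v_{j-1}$, inside the difference); then $|Dv_{j-1}|^{p'(p-2)}$ is split into $|Dv_j|^{p'(p-2)}$ plus $|Dv_{j-1}-Dv_j|^{p'(p-2)}$. The main term is controlled through $|V_s(Du)-V_s(Dv_j)|^2$ on $\tilde B_j$, so that the dangerous factor produced by \eqref{comp.definitive} is $\big(\mean{\tilde B_j}|Du-Dv_j|^{p'}\dx\big)^{1/p'}$ --- the very quantity being estimated --- which is reabsorbed after Young. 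The error term is handled by Young with exponents $(p-1,(p-1)/(p-2))$, which keeps everything at the full power $p$ and hence at the \emph{first} power of $\tilde\omega$, so that the final $p'$-th root yields $[\tilde\omega]^{1/p'}\le c[\tilde\omega]^{1/2}$. You would need to restructure your argument for \eqref{compa.I1} along these lines.
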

\begin{proof} By applying \rif{riversa} with $B_\rho \equiv \tilde B_j, \tilde B_{j-1}$, and using \rif{ass.mean0}-\rif{ass.meas}, we have that 
\begin{equation}\label{ass.mean}
\mean{\tilde B_j}\big(s+|Du|\big)^p\dx+\mean{\tilde B_{j-1}}\big(s+|Du|\big)^p\dx\leq c\lambda^p
\end{equation}
holds for a constant $c\equiv c(n,p,\nu, L)$. 
Matching \eqref{comp.definitive} with \eqref{ass.mean} yields
\begin{multline}\label{comp.p.k}
\mean{\tilde B_k}\big|V_s(Du)-V_s(Dv_k)\big|^2\dx\leq c\,\tilde \omega(R_k)\lambda^p\\
+c\left(\mean{\tilde B_k}|Du-Dv_k|^{p'}\dx\right)^{1/p'} R_k\left(\mean{B_k}|\mu|^q\dx\right)^{1/q}
\end{multline}
for $k=j,j-1$. Moreover, using \eqref{equiv.V.p} to estimate from below the left-hand side in the above display, and also using H\"older's and then Young's inequality, we have
\begin{align*}
\mean{\tilde B_{k}}&|Du-Dv_{k}|^p\dx \leq c\,\tilde \omega(R_k)\lambda^p\\
&\hspace{25mm}+c\left(\mean{\tilde B_{k}}|Du-Dv_{k}|^p\dx\right)^{1/p} R_k\left(\mean{B_{k}}|\mu|^q\dx\right)^{1/q}\\
&\leq c\tilde \omega(R_k)\lambda^p+c\,R_{k}^{p'}\left(\mean{B_{k}}|\mu|^q\dx\right)^{p'/q}+\frac12\mean{\tilde B_{k}}|Du-Dv_{k}|^p\dx
\end{align*}
so that reabsorbing the last integral on the right-hand side we have that 
\eqn{intermedia}
$$
\mean{\tilde B_{k}}|Du-Dv_{k}|^p\dx \leq c\,\tilde \omega(R_k)\lambda^p+cR_{k}^{p'}\left(\mean{B_{k}}|\mu|^q\dx\right)^{p'/q}
$$
and using \eqref{ass.meas} we deduce \eqref{compa.k}. We now proceed with the proof of \rif{compa.I1}. Since both $\omega(R_k)$ and $R_k$ are smaller than one and $A\geq 1$, recalling \rif{tiome}, from \rif{compa.k} it also follows
\begin{equation*}
\mean{\tilde B_{k}}|Du-Dv_{k}|^p\dx\leq c\lambda^p
\end{equation*}
for $k=j,j-1$ and also
\begin{align}\label{double.compa.k}
 \mean{\tilde B_j}|Dv_j-Dv_{j-1}|^p\dx& \leq 2^{p-1}\mean{\tilde B_j}|Du-Dv_j|^p\dx\notag\\
 &\qquad\qquad+2^{p-1}\delta^{-n}\mean{\tilde B_{j-1}}|Du-Dv_{j-1}|^p\dx\notag\\
 &\leq c\,\tilde \omega(R_{j-1})\lambda^p+cR_{j-1}^{p'}\left(\mean{B_{j-1}}|\mu|^q\dx\right)^{p'/q}\notag\\
 &\leq c\lambda^p
\end{align}
with $c$, in both cases, depending on $n,p,\nu, L,\tilde L,\|\mu\|_{L^n},\delta$. Now we use 
\eqref{ass.lambda} to infer
\begin{equation}\label{first.p'}
\mean{\tilde B_j}|Du-Dv_j|^{p'}\dx\leq c\lambda^{p'(2-p)}\mean{\tilde B_j}|Dv_{j-1}|^{p'(p-2)}|Du-Dv_j|^{p'}\dx
\end{equation}
with $c$ depending on $p,B$. We split
\begin{multline}\label{split.p'}
\mean{\tilde B_j}|Dv_{j-1}|^{p'(p-2)}|Du-Dv_j|^{p'}\dx\leq c\mean{\tilde B_j}|Dv_j|^{p'(p-2)}|Du-Dv_j|^{p'}\dx\\
+c\mean{\tilde B_j}|Dv_{j-1}-Dv_j|^{p'(p-2)}|Du-Dv_j|^{p'}\dx=:V+VI
\end{multline}
and we estimate the two terms separately, starting from the second one. Using Young's inequality with exponents $p/p'=p-1$ and $(p-1)/(p-2)$ (only when $p>2$), and recalling \rif{intermedia} and \eqref{double.compa.k}, we estimate
\begin{align*}
VI&\leq c\mean{\tilde B_j}|Du-Dv_j|^p\dx+c\mean{\tilde B_j}|Dv_j-Dv_{j-1}|^{p}\dx\nonumber\\
&\leq c\tilde \omega(R_{j-1})\lambda^p+ cR_{j-1}^{p'}
\left(\mean{B_{j-1}}|\mu|^q\dx\right)^{p'/q}
\end{align*}
so that, for a constant depending on $n,p,\nu, L,\tilde L,\|\mu\|_{L^n},\delta$ it holds that 
\begin{equation}\label{II}
 \hspace{-3mm}\lambda^{p'(2-p)}VI\leq c\tilde\omega(R_{j-1})\lambda^{p'}+ c \lambda^{p'(2-p)}R_{j-1}^{p'}\left(\mean{B_{j-1}}|\mu|^q\dx\right)^{p'/q}\;.
\end{equation}
We now estimate $\lambda^{p'(2-p)}V$. To this aim we preliminary note that the estimate
\eqn{size.is.enough}
$$
 \mean{\tilde B_j}|Dv_j|^p\dx\leq c\mean{\tilde B_{j-1}}|Dv_j-Dv_{j-1}|^p\dx+c\mean{\tilde B_j}|Dv_{j-1}|^p\dx\leq c\lambda^p
$$
holds by \eqref{ass.lambda} and \eqref{double.compa.k}. Hence we have, using again H\"older's inequality (when $p>2$) with conjugate exponents $(2/p',2(p-1)/(p-2))$ and \eqref{equiv.V}
\begin{align}\label{I}
\hspace{-10mm}\lambda^{p'(2-p)}V&=\lambda^{p'(2-p)}\mean{\tilde B_j}|Dv_j|^{p'(p-2)/2+p'(p-2)/2}|Du-Dv_j|^{p'}\dx\nonumber\\
&\leq \lambda^{p'(2-p)}\left(\mean{\tilde B_j}|Dv_j|^{p-2}|Du-Dv_j|^2\dx\right)^{p'/2}\left(\mean{\tilde B_j}|Dv_j|^p\dx\right)^{(2-p')/2}\nonumber\\
&\leq c\lambda^{p'(2-p)/2}\left(\mean{\tilde B_j}\big(s+|Du|+|Dv_j|\big)^{p-2}|Du-Dv_j|^2\dx\right)^{p'/2}\nonumber\\
&\leq  c\lambda^{p'(2-p)/2}\left(\mean{\tilde B_j}\big|V_s(Du)-V_s(Dv_j)\big|^2\dx\right)^{p'/2}\;.
\end{align}
In turn, by using \rif{comp.p.k}, we estimate
\begin{align*}
&\lambda^{p'(2-p)/2}\left(\mean{\tilde B_j}\big|V_s(Du)-V_s(Dv_j)\big|^2\dx\right)^{p'/2}\nonumber\\
&\leq c[\tilde \omega(R_j)]^{p'/2}\lambda^{p'(2-p)/2+pp'/2}\nonumber\\
&\hspace{1cm}+c\lambda^{p'(2-p)/2}\left(\mean{\tilde B_j}|Du-Dv_j|^{p'}\dx\right)^{1/2} \left(R_j^q\mean{B_j}|\mu|^q\dx\right)^{p'/(2q)} \nonumber\\
&\leq c[\tilde \omega(R_j)]^{p'/2}\lambda^{p'}+c_{\eps}\lambda^{p'(2-p)}R_j^{p'}\left(\mean{B_j}|\mu|^q\dx\right)^{p'/q}\nonumber\\
&\hspace{1cm}+\varepsilon\mean{\tilde B_j}|Du-Dv_j|^{p'}\dx\;,
\end{align*}
$c_{\eps}$ depending on $n,p,\nu, L,\tilde L,\|\mu\|_{L^n},\delta$ and on $\eps$. Merging all the estimate found from display \rif{first.p'} on and making a few elementary manipulations yields, with $\eps \in (0,1)$,
\begin{multline*}
\mean{\tilde B_j}|Du-Dv_j|^{p'}\dx\leq c[\tilde \omega(R_{j-1})]^{p'/2}\lambda^{p'}\\
+c_{\eps}\lambda^{p'(2-p)}R_{j-1}^{p'}\left(\mean{B_{j-1}}|\mu|^q\dx\right)^{p'/q} +c\,\eps\mean{\tilde B_j}|Du-Dv_j|^{p'}\dx
\end{multline*}
so that \eqref{compa.I1} follows by choosing $\eps$ small enough in order to reabsorb the last integral on the left-hand side.
\end{proof}

\subsection{A second comparison estimate in the singular case} \label{improved.comparison2}
Here we derive a suitable analog to \rif{compa.I1} in the so-called singular case $1<p<2$. The general setting remains the one fixed in \rif{setting}-\rif{funct.frozenj}. 
\begin{Lemma}\label{comparison.I1-} Let $u\in W^{1,p}_\loc(\Omega)$ be a local minimiser of the functional \eqref{functional}, where the energy density $f(\cdot)$ satisfies assumptions \eqref{asss.f} with $1<p<2$ and where $\mu\in L(n,1)$. Let $v_j\in u+W^{1,p}_0(\tilde B_j)$ be the solution to \trif{funct.frozenj}. 
Suppose that
\begin{equation}\label{ass.singular}
\mean{B_j}\big(s+|Du|\big)^p\dx\leq \lambda^p,\qquad R_{j}\left(\mean{B_{j}}|\mu|^q\dx\right)^{1/q}\leq \lambda^{p-1}
\end{equation}
hold for some $j\in\N_0$ and $\lambda\geq1$, where $q$ has been fixed in \trif{fissati}. Then the inequality
\eqn{compa.I1-}
$$
\left(\mean{\tilde B_j}|Du-Dv_j|^p\dx\right)^{1/p}\leq \Cl[c]{c:compa.I1-}[\tilde \omega(R_j)]^{1/2}\lambda+\Cr{c:compa.I1-}\lambda^{2-p}R_j\left(\mean{B_j}|\mu|^q\dx\right)^{1/q}
$$
holds for a constant $\Cr{c:compa.I1-}$ depending on $n,p,\nu, L,\tilde L,\|\mu\|_{L^n}$.
\end{Lemma}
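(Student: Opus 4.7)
The plan is to follow the template of the degenerate case (Lemma \ref{Lemma:comparison.I1}) but to use the singular-case companion \rif{equiv.V.p-} in place of \rif{equiv.V.p} to convert $L^2$-control of $V_s(Du)-V_s(Dv_j)$ into $L^p$-control of $Du-Dv_j$. Throughout, set $E:=\mean{\tilde B_j}|Du-Dv_j|^p\dx$, $W:=\mean{\tilde B_j}|V_s(Du)-V_s(Dv_j)|^2\dx$, and $M:=\bigl(\mean{B_j}|\mu|^q\dx\bigr)^{1/q}$.

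First, I apply the general comparison estimate \rif{comp.definitive} on the ball $\tilde B_j$, where, since $1<p<2$, the exponent from \rif{fissati} is $t=p$. Using the inclusion $\tilde B_j\subset B_j$ with $|B_j|/|\tilde B_j|=2^n$ to transfer each averaged integral from $\tilde B_j$ to $B_j$ at the cost of a dimensional factor, applying the monotonicity $\tilde\omega(R_j/2)\le\tilde\omega(R_j)$, and invoking the first part of \rif{ass.singular} on the $|Du|^p$ term, I obtain
\[
W\le c\,\tilde\omega(R_j)\lambda^p+c\,E^{1/p}R_j M.
\]

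Second, I apply \rif{equiv.V.p-} pointwise to $\xi_1=Du$, $\xi_2=Dv_j$, raise to the $p$-th power, integrate over $\tilde B_j$, and estimate the resulting second summand by H\"older's inequality with conjugate exponents $2/(2-p)$ and $2/p$, using the first part of \rif{ass.singular} on the $(s+|Du|)^p$ factor; this yields
\[
E\le c\,W+c\,\lambda^{p(2-p)/2}W^{p/2}.
\]
Substituting the bound on $W$, using $(a+b)^{p/2}\le a^{p/2}+b^{p/2}$ (valid because $p/2<1$), and estimating $\tilde\omega(R_j)\le C[\tilde\omega(R_j)]^{p/2}$ (since $\tilde\omega$ is bounded on the relevant range of radii), I arrive at
\[
E\le c\,[\tilde\omega(R_j)]^{p/2}\lambda^p+c\,E^{1/p}R_j M+c\,\lambda^{p(2-p)/2}E^{1/2}(R_j M)^{p/2}.
\]

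The closing move is to reabsorb the two $E$-dependent contributions via Young's inequality. For the middle term I apply Young with exponents $p$ and $p'$ to obtain $\eps E+c_\eps(R_j M)^{p'}$; the crucial observation here is that the second part of \rif{ass.singular} yields
\[
(R_j M)^{p'}=(R_j M)^p\cdot(R_j M)^{p(2-p)/(p-1)}\le\lambda^{p(2-p)}(R_j M)^p,
\]
so both hypotheses in \rif{ass.singular} cooperate precisely to convert the $(R_j M)^{p'}$ remainder into the desired target form. For the last term, Young with exponents $(2,2)$ applied to $E^{1/2}\cdot\lambda^{p(2-p)/2}(R_j M)^{p/2}$ directly produces $\eps E+c_\eps\lambda^{p(2-p)}(R_j M)^p$. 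Choosing $\eps$ small enough to reabsorb both $\eps E$ into the left-hand side and taking a $p$-th root gives exactly \rif{compa.I1-}. I expect the main and essentially only subtlety to be the manipulation of $(R_j M)^{p'}$ above — this is where the two halves of \rif{ass.singular} must be used together — while the remaining steps are the routine interplay of \rif{equiv.V.p-}, H\"older and Young characteristic of the $1<p<2$ regime.
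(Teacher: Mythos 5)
Your proposal is correct and follows essentially the same route as the paper: the paper starts from \eqref{equiv.V} and a triangle/Young splitting of $(s+|Du|+|Dv_j|)^{p(2-p)/2}$, which yields exactly your inequality $E\le cW+c\lambda^{p(2-p)/2}W^{p/2}$ (your use of \eqref{equiv.V.p-} packages the same step), and then closes by the same reabsorption via Young together with the key observation $(R_jM)^{p'}\le\lambda^{p(2-p)}(R_jM)^{p}$ coming from \eqref{ass.singular}$_2$. All exponent bookkeeping in your argument checks out.
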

\begin{proof}
By \eqref{equiv.V} we have
\[
|Du-Dv_j|^p\leq c|V_s(Du)-V_s(Dv_j)|^p\big(s+|Du|+|Dv_j|\big)^{p(2-p)/2}
\]
and using H\"older's and triangle's inequalities
\begin{multline*}
\mean{\tilde B_j}|Du-Dv_j|^p\dx\leq c\left(\mean{\tilde B_j}|V_s(Du)-V_s(Dv_j)|^2\dx\right)^{p/2}\times\\
\times\left[\left(\mean{\tilde B_j}\big(s+|Du|\big)^p\dx\right)^{(2-p)/2}+\left(\mean{\tilde B_j}|Du-Dv_j|^p\dx\right)^{(2-p)/2}\right].
\end{multline*}
Then we use \rif{comp.definitive} and \eqref{ass.singular}$_1$ to estimate
\begin{align*}
&\hspace{-4mm}\left(\mean{\tilde B_j}|V_s(Du)-V_s(Dv_j)|^2\dx\right)^{p/2}\left(\mean{\tilde B_j}\big(s+|Du|\big)^p\dx\right)^{(2-p)/2}\\
&\leq c\lambda^{p(2-p)/2}\left(\mean{\tilde B_j}|V_s(Du)-V_s(Dv_j)|^2\dx\right)^{p/2} \\
& \leq c[\tilde \omega(R_j)]^{p/2}\lambda^{p(2-p)/2+p^2/2}\\
&\qquad+c\lambda^{p(2-p)/2}\left(\mean{\tilde B_j}|Du-Dv_j|^p\dx\right)^{1/2} \left(R_j^q\mean{B_j}|\mu|^q\dx\right)^{p/(2q)}\\
& \leq c[\tilde \omega(R_j)]^{p/2}\lambda^p+c\lambda^{p(2-p)}R_j^p\left(\mean{B_j}|\mu|^q\dx\right)^{p/q}\\
&\qquad+\frac14\mean{\tilde B_j}|Du-Dv_j|^p\dx\;.
\end{align*}
Similarly, we have
\begin{align*}
&\hspace{-6mm}\left(\mean{\tilde B_j}|V_s(Du)-V_s(Dv_j)|^2\dx\right)^{p/2}\left(\mean{\tilde B_j}|Du-Dv_j|^p\dx\right)^{(2-p)/2}\\
&\leq c\mean{\tilde B_j}|V_s(Du)-V_s(Dv_j)|^2\dx +\frac18\mean{\tilde B_j}|Du-Dv_j|^p\dx\\
& \leq  c\tilde \omega(R_j)\lambda^p+c\left(\mean{\tilde B_j}|Du-Dv_j|^p\dx\right)^{1/p}R_j\left(\mean{B_j}|\mu|^q\dx\right)^{1/q}
\\&\hspace{7cm}+\frac18\mean{\tilde B_j}|Du-Dv_j|^p\dx
\\
& \leq  c\tilde\omega(R_j)\lambda^p
+cR_j^{p'}\left(\mean{B_j}|\mu|^q\dx\right)^{p'/q}+\frac14\mean{\tilde B_j}|Du-Dv_j|^p\dx\;.
\end{align*}
We then conclude reabsorbing the last integral on the left 
hand side and finally noticing that, using \eqref{ass.singular}$_2$, we have
\begin{eqnarray*}
R_j^{p'}\left(\mean{B_j}|\mu|^q\dx\right)^{p'/q}&=&\left(R_j^{q}\mean{B_j}|\mu|^q\dx\right)^{p/q+p(2-p)/[q(p-1)]}
\\ &\leq &\lambda^{p(2-p)}\left(R_j^q\mean{B_j}|\mu|^q\dx\right)^{p/q}.
\end{eqnarray*}


\end{proof}

\subsection{More regular integrands}\label{more regular sec} Here we state and prove the versions of Lemmas \ref{Lemma:comparison.I1} and \ref{comparison.I1-} which are necessary to prove Theorem \ref{second.thm}. Therefore in this section we consider minimisers $u\in W^{1,p}_\loc(\Omega)$ of the functional \eqref{functional} assuming that $f(\cdot)$ satisfies \eqref{asss.f} and also \eqref{further.asss.f}, where $\omega(\cdot)$ is Dini continuous; 
as usual we take $\mu\in L(n,1)$. The modifications essentially occur in the estimates contained in Section \ref{Comparison maps}; we introduce
\[
\bar \omega(\rho):=\omega(\varrho) + \varrho^{\alpha\gamma/2}\,,
\]
as in \eqref{tiome}; if $\omega(\cdot)$ is Dini continuous, the same holds for $\bar \omega(\cdot)$. Let then $\Omega '\Subset \Omega$ be an open subset. Again we have
\[
\big[\osc_{B_\rho} u\big]^\alpha\leq c \tilde \omega(\varrho)
\]

We restart from \rif{int.V.f}, similarly as in \eqref{I-V}; using again the minimality of $u$ in \eqref{pre.Sobol.mu} we have
\begin{align*}
\mean{B_\rho}&\big|V_s(Du)-V_s(Dv)\big|^2\dx\\
&\leq  c\mean{B_\rho}\big[f(x_0,(u)_{B_\rho},Du)-f(x_0,(u)_{B_\rho},Dv)\big]\dx\\
&\leq  c\mean{B_\rho}\big[f(x_0,(u)_{B_\rho},Du)-f(x_0,(u)_{B_\rho},Dv)\big]\dx\\
&\qquad+  c\mean{B_\rho}\big[f(x,v,Dv)-f(x,u,Du)\big]\dx+  c\mean{B_\rho}(v-u)\mu\dx\\
&=  c\mean{B_\rho}\big[f(x_0,(u)_{B_\rho},Du)-f(x,u,Du)\big]\dx\\
&\qquad-  c\mean{B_\rho}\big[f(x_0,(u)_{B_\rho},Dv)-f(x,u,Dv)\big]\dx\\
&\qquad+  c\mean{B_\rho}\big[f(x,v,Dv)-f(x,u,Dv)\big]\dx+  c\mean{B_\rho}(v-u)\mu\dx\\
&:=VII-VIII+IX+X\;.
\end{align*}
Accordingly, we define
\[
G(x,\xi):=f(x_0,(u)_{B_\rho},\xi)-f(x,u(x),\xi),\qquad x\in \Omega, \xi\in\R^n
\]
and we notice that with this notation, we have
\begin{align*}
VII-VIII&=c\mean{B_\rho}\big[G(x,Du)-G(x,Dv)\big]\dx\\
&=c\mean{B_\rho} \int_0^1\langle \partial G(x,\lambda Du+(1-\lambda)Dv),Du-Dv\rangle\,d\lambda\dx\;.
\end{align*}
Using assumption \eqref{further.asss.f} we estimate
\begin{multline*}
|\partial G(x,\lambda Du+(1-\lambda)Dv)|=|\partial f(x_0,(u)_{B_\rho},\lambda Du+(1-\lambda)Dv)\\
\hspace{5cm}-\partial f(x,u(x),\lambda Du+(1-\lambda)Dv)|\\
\leq \tilde L\big[ \omega(\rho)+|u-(u)_{B_\rho}|^{{\alpha}}\big]\big(s+|\lambda Du+(1-\lambda)Dv|\big)^{p-1}\;.
\end{multline*}
Easy manipulations, using Young's inequality, \rif{tiome2} and noting that $p-1=(p-2)/2+p/2$ give
\begin{align*}
&\hspace{-5mm}VII-VIII\leq  \tilde L
\mean{B_\rho}\big\{\omega(\rho)+\big[\osc_{B_\rho} u\big]^{\alpha}\big\}\big(s+|Du|+|Dv|\big)^{p-1}|Du-Dv|\dx\\
&\hspace{-2mm} \leq  \eps\mean{B_\rho}\big(s+|Du|+|Dv|\big)^{p-2}|Du-Dv|^2\dx\\
& \quad\ + c_{\eps}\left\{[\omega(\rho)]^2+\big[\osc_{B_\rho} u\big]^{2\alpha}\right\}
\mean{B_\rho}\big(s+|Du|+|Dv|\big)^{p}\dx\\
&\hspace{-2mm} \leq  \eps\mean{B_\rho}\big|V_s(Du)-V_s(Dv)\big|^2\dx + c_{\eps}[\bar \omega(\rho)]^2
\mean{B_\rho}\big(s+|Du|\big)^{p}\dx
\end{align*}
where we have also used \rif{equiv.V} and \rif{energy.Dv}. The term $IX$ can be  estimated as in \rif{sti5} and this yields 
\[
IX \leq c[\bar \omega(\rho)]^2\mean{B_\rho}\big(s+|Du|\big)^p\dx
\]
and finally $X$ is estimated exactly as in \eqref{sti1}. Combining the above estimates leads therefore to the proof of the following
\begin{Lemma}
Let $u\in W^{1,p}_\loc(\Omega)$ a local minimiser of the functional \eqref{functional} and suppose that $f(\cdot)$ satisfies \eqref{asss.f} and also \eqref{further.asss.f} and with $\mu\in L(n,1)$. Let $v\in u+W^{1,p}_0(B_\rho)$ be the solution to \trif{funct.frozen}. Then the estimate
\begin{multline}\label{comp.definitive2}
\hspace{-3mm}\mean{B_\rho} \big|V_s(Du)-V_s(Dv)\big|^2\dx
\leq c[\bar \omega(\rho)]^2\mean{B_{\rho}}\big(s+|Du|\big)^p\dx\\
+c\left(\mean{B_\rho}|Du-Dv|^t\dx\right)^{1/t}\rho\left(\mean{B_\rho}|\mu|^q\dx\right)^{1/q}
\end{multline}
holds for a constant depending on $n,p,\nu, L,\tilde L$ and $\|\mu\|_{L^n}$, and where
\eqn{fissati-parte2}
$$
t:=\min\{p,p'\}\qquad \mbox{and}\qquad 1<q<n\;.
$$
\end{Lemma}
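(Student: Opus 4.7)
The plan is to repeat the derivation of the basic comparison estimate \eqref{comp.definitive}, but replacing the $C^0$-type estimate of $f$-differences used there (which relies on \eqref{asss.f}$_4$) by a first-order Taylor expansion in the gradient variable that exploits the stronger hypothesis \eqref{further.asss.f} on $\partial f$. This single substitution is the mechanism that converts the factor $\tilde\omega(\rho)$ appearing in \eqref{comp.definitive} into the sharper $[\bar\omega(\rho)]^2$ in \eqref{comp.definitive2}.

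Concretely, I would start from Lemma \ref{Lemma.V} together with the minimality inequality \eqref{pre.Sobol.mu} to obtain an upper bound of the shape
$$\mean{B_\rho}\bigl|V_s(Du) - V_s(Dv)\bigr|^2 \dx \leq c\mean{B_\rho}\bigl[G(x,Du) - G(x,Dv) + H(x) + (v-u)\mu\bigr] \dx,$$
where $G(x,\xi) := f(x_0,(u)_{B_\rho},\xi) - f(x,u(x),\xi)$ and $H(x) := f(x,v,Dv) - f(x,u,Dv)$. The crucial new step is to rewrite
$$G(x,Du) - G(x,Dv) = \int_0^1 \bigl\langle \partial G(x, \lambda Du + (1-\lambda)Dv),\, Du - Dv \bigr\rangle \, d\lambda,$$
for which \eqref{further.asss.f} supplies the pointwise bound
$$|\partial G(x,\xi)| \leq \tilde L \bigl[\omega(\rho) + |u - (u)_{B_\rho}|^\alpha\bigr](s + |\xi|)^{p-1}.$$

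Splitting $(s + |\xi|)^{p-1} = (s + |\xi|)^{(p-2)/2}(s + |\xi|)^{p/2}$ and applying Young's inequality with exponents $(2,2)$, this contribution decomposes into a small multiple of $|V_s(Du) - V_s(Dv)|^2$ --- absorbed on the left via \eqref{equiv.V} --- plus a term dominated, after invoking \eqref{tiome2} and the energy comparison \eqref{energy.Dv}, by $c[\bar\omega(\rho)]^2 \mean{B_\rho}(s + |Du|)^p \dx$. The $H$-term is controlled as in \eqref{sti5} using the maximum-principle bound \eqref{max.osc} on $\|u - v\|_{L^\infty(B_\rho)}$, while the $(v-u)\mu$-term is handled verbatim as in \eqref{sti1} via the Sobolev/H\"older chain with exponent $t = \min\{p, p'\}$ and the Lorentz-dual exponent $q$ from \eqref{q}--\eqref{q-}.

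The point requiring most care is the exponent bookkeeping in the Young's inequality: the factor $(s + |\xi|)^{p-1}|Du - Dv|$ must be split in exactly the proportions $(s + |\xi|)^{(p-2)/2}|Du - Dv|$ and $(s + |\xi|)^{p/2}$, so that the first piece matches the weighted $L^2$ scale of $V_s$ while the modulus-of-continuity coefficient $\omega(\rho) + |u - (u)_{B_\rho}|^\alpha$ lands entirely on the second piece and appears squared in the final estimate. This squaring is exactly what permits the use of plain Dini continuity of $\omega$ in Theorem \ref{second.thm}, as opposed to the $1/2$-Dini continuity required in Theorem \ref{main.thm}: the regularity that was previously lost because we only controlled the modulus of $f$ rather than of $\partial f$ is now recovered by differentiating once in $\xi$ before performing the comparison.
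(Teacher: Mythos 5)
Your proposal is correct and follows essentially the same route as the paper: the same decomposition into $G(x,Du)-G(x,Dv)$, the term $f(x,v,Dv)-f(x,u,Dv)$, and the $(v-u)\mu$ term; the same first-order Taylor expansion of $G$ in $\xi$ with the pointwise bound on $\partial G$ from \eqref{further.asss.f}; and the same splitting $p-1=(p-2)/2+p/2$ with Young's inequality, absorbing the $\eps\,|V_s(Du)-V_s(Dv)|^2$ piece on the left via \eqref{equiv.V} so that the modulus of continuity appears squared. The treatment of the remaining two terms via \eqref{max.osc}/\eqref{sti5} and \eqref{sti1} also matches the paper exactly.
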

With the previous result we can then derive the analogs of Lemmas \ref{Lemma:comparison.I1} and \ref{comparison.I1-}, that we report below in sequence. The proof is identical to the one of Lemmas \ref{Lemma:comparison.I1} and \ref{comparison.I1-} once 
we use \rif{comp.definitive2} instead of \rif{comp.definitive}. 
\begin{Lemma}\label{Lemma:improved.comparison.I1} 
Let $u\in W^{1,p}_\loc(\Omega)$ a local minimiser of the functional \eqref{functional} and suppose that $f(\cdot)$ satisfies \eqref{asss.f} and also \eqref{further.asss.f} with $p\geq2$ and with $\mu\in L(n,1)$. Let $v_j\in u+W^{1,p}_0(\tilde B_j), v_{j-1}\in u+W^{1,p}_0(\tilde B_{j-1})$ be the solutions to \trif{funct.frozenj} and assume that, for some positive $j\in\N$, $A\geq 1$ and some $\lambda\geq1$, there holds
\trif{ass.mean} and \trif{ass.meas}. Then
\begin{equation}\label{compa.k-parte2}
\mean{\tilde B_{k}}|Du-Dv_{k}|^p\dx\leq  \Cl[c]{c:compa.k-parte2}
\left\{[\bar \omega(R_{k})]^2+A^{-p}\right\}\lambda^p
\end{equation}
holds for $k=j,j-1$ and a constant $\Cr{c:compa.k-parte2}$ depending on $n,p,\nu, L,\tilde L,\|\mu\|_{L^n}$. If moreover \trif{ass.lambda}
holds for some other constant $B\geq1$, then
\begin{multline}\label{compa.I1-parte2}
\left(\mean{\tilde B_j}|Du-Dv_j|^{p'}\dx\right)^{1/p'}\leq \Cl[c]{c:compa.I1-parte22}
\bar\omega(R_j)\lambda\\+\Cr{c:compa.I1-parte22}\lambda^{2-p}R_{j-1}
\left(\mean{B_{j-1}}|\mu|^q\dx\right)^{1/q}
\end{multline}
holds with $\Cr{c:compa.I1-parte22}$ depending on $n,p,\nu, L,\tilde L,\|\mu\|_{L^n},\delta, B$.
\end{Lemma}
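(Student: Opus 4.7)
The plan is to run the proof of Lemma \ref{Lemma:comparison.I1} essentially verbatim, replacing \eqref{comp.definitive} by the sharper \eqref{comp.definitive2} at each step. The hypotheses are unchanged, so the reverse inequality of Lemma \ref{reverse} again upgrades \eqref{ass.mean0} to $\mean{\tilde B_j}(s+|Du|)^p\dx+\mean{\tilde B_{j-1}}(s+|Du|)^p\dx\leq c\lambda^p$, and matching this with \eqref{comp.definitive2} yields, for $k=j,j-1$,
\begin{equation*}
\mean{\tilde B_k}|V_s(Du)-V_s(Dv_k)|^2\dx\leq c[\bar\omega(R_k)]^2\lambda^p+c\Big(\mean{\tilde B_k}|Du-Dv_k|^{p'}\dx\Big)^{1/p'}R_k\Big(\mean{B_k}|\mu|^q\dx\Big)^{1/q}.
\end{equation*}
From this, the first assertion \eqref{compa.k-parte2} would follow by bounding the left-hand side from below with \eqref{equiv.V.p}, applying H\"older and then Young's inequality to reabsorb the $|Du-Dv_k|^p$-term exactly as in the derivation of \eqref{intermedia}, and finally invoking \eqref{ass.meas} to turn the $\mu$-remainder into $A^{-p}\lambda^p$; the only change with respect to Lemma \ref{Lemma:comparison.I1} is that $\tilde\omega(R_k)$ is replaced everywhere by $[\bar\omega(R_k)]^2$.

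For the second assertion \eqref{compa.I1-parte2} I would repeat the decomposition of \eqref{first.p'}--\eqref{split.p'}: use \eqref{ass.lambda} to produce a $\lambda^{p'(2-p)}$ factor, split $|Dv_{j-1}|^{p'(p-2)}\leq c|Dv_j|^{p'(p-2)}+c|Dv_j-Dv_{j-1}|^{p'(p-2)}$, and handle the resulting pieces $V$ and $VI$ separately. The term $VI$ is controlled by Young's inequality together with the freshly-proved \eqref{compa.k-parte2} applied at scales $R_j$ and $R_{j-1}$, producing a $\bar\omega(R_{j-1})\lambda$ contribution plus the standard $\mu$-remainder on $B_{j-1}$. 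The piece $V$ is more delicate: H\"older with conjugate exponents $(2/p',2(p-1)/(p-2))$ converts it, via \eqref{equiv.V}, into $c\lambda^{p'(2-p)/2}(\mean{\tilde B_j}|V_s(Du)-V_s(Dv_j)|^2\dx)^{p'/2}$, into which one inserts the displayed base estimate above.

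The crucial improvement, and the reason plain Dini continuity of $\omega$ is enough here whereas $1/2$-Dini was needed in Theorem \ref{main.thm}, is that the modulus appearing in \eqref{comp.definitive2} is now $[\bar\omega(R_j)]^2$: raising it to the power $p'/2$ and then taking a $1/p'$-th root produces exactly $\bar\omega(R_j)$, in place of the $[\tilde\omega(R_j)]^{1/2}$ of \eqref{compa.I1}. A final Young's inequality with a small $\eps$ reabsorbs the surviving $|Du-Dv_j|^{p'}$-term on the right, closing the argument. The only point that requires care is the book-keeping: the $\mu$-remainder must stay at scale $R_{j-1}$ throughout, which forces one to estimate $VI$ on $\tilde B_{j-1}\supset \tilde B_j$ as in \eqref{double.compa.k}; once that is set up the proof follows the original one line by line.
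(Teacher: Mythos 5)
Your proposal is correct and coincides with the paper's own argument: the paper proves this lemma simply by declaring it identical to the proof of Lemma \ref{Lemma:comparison.I1}, with \eqref{comp.definitive2} substituted for \eqref{comp.definitive}, which is exactly what you carry out. Your observation that the squared modulus $[\bar\omega]^2$ survives the exponent $p'/2$ followed by the $1/p'$-th root to yield $\bar\omega(R_j)$ is precisely the point of the improved comparison estimate.
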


\begin{Lemma}\label{improvedcomparison.I1-} Let $u\in W^{1,p}_\loc(\Omega)$ a local minimiser of the functional \eqref{functional} and suppose that $f(\cdot)$ satisfies \eqref{asss.f} and also \eqref{further.asss.f} with $1<p<2$ and with $\mu\in L(n,1)$. Let $v_j,v_{j-1}$ be as in Lemma \ref{Lemma:improved.comparison.I1}. Suppose that \trif{ass.singular} holds for some $j\in\N$ and $\lambda\geq1$, where $q$ has been fixed in \trif{fissati-parte2}. Then the inequality
$$
\left(\mean{\tilde B_j}|Du-Dv_j|^p\dx\right)^{1/p}\leq \Cl[c]{c:compa.I1-parte2}\bar \omega(R_j)\lambda+\Cr{c:compa.I1-parte2}\lambda^{2-p}R_j\left(\mean{B_j}|\mu|^q\dx\right)^{1/q}
$$
holds with a constant $\Cr{c:compa.I1-parte2}$ depending on $n,p,\nu, L,\tilde L,\|\mu\|_{L^n}$.
\end{Lemma}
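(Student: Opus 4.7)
The plan is to follow verbatim the argument used for Lemma \ref{comparison.I1-}, replacing every appeal to the comparison estimate \rif{comp.definitive} by the sharper estimate \rif{comp.definitive2}. The structural difference between these two is that in \rif{comp.definitive2} the modulus $\tilde\omega(\rho)$ is replaced by the squared quantity $[\bar\omega(\rho)]^2$, and it is precisely this extra power that will eventually deliver the linear $\bar\omega(R_j)\lambda$ in the conclusion, rather than $[\tilde\omega(R_j)]^{1/2}\lambda$.

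I would begin, as in the proof of Lemma \ref{comparison.I1-}, from the pointwise inequality coming from \rif{equiv.V}, namely
\[
|Du-Dv_j|^p \leq c\,|V_s(Du)-V_s(Dv_j)|^p\bigl(s+|Du|+|Dv_j|\bigr)^{p(2-p)/2}.
\]
Averaging over $\tb$, applying H\"older's inequality with exponents $(2/p,2/(2-p))$, and then using the triangle inequality $|Dv_j|\leq (s+|Du|)+|Du-Dv_j|$ leads to
\begin{align*}
\mean{\tb}|Du-Dv_j|^p\dx
&\leq c\Bigl(\mean{\tb}|V_s(Du)-V_s(Dv_j)|^2\dx\Bigr)^{p/2}\\
&\quad\times\biggl[\Bigl(\mean{\tb}(s+|Du|)^p\dx\Bigr)^{(2-p)/2}+\Bigl(\mean{\tb}|Du-Dv_j|^p\dx\Bigr)^{(2-p)/2}\biggr].
\end{align*}

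At this stage I would invoke \rif{comp.definitive2} (with $t=p$, since $1<p<2$) to bound the $V_s$-integral, using \rif{ass.singular}$_1$ to control the $p$-energy of $Du$ by $\lambda^p$. The resulting contribution splits into a main term $c[\bar\omega(R_j)]^2\lambda^p$ and a $\mu$-term of the form $c\bigl(\mean{\tb}|Du-Dv_j|^p\dx\bigr)^{1/p}R_j\bigl(\mean{B_j}|\mu|^q\dx\bigr)^{1/q}$. Substituting this back into the Hölder estimate and applying Young's inequality in each of the two summands inside the brackets — with carefully chosen small coefficient to split off $\mean{\tb}|Du-Dv_j|^p\dx$ — one reabsorbs these on the left-hand side and arrives at
\[
\mean{\tb}|Du-Dv_j|^p\dx \leq c[\bar\omega(R_j)]^p\lambda^p + c\,R_j^{p'}\Bigl(\mean{B_j}|\mu|^q\dx\Bigr)^{p'/q}.
\]

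To conclude, I would convert the $\mu$-term via \rif{ass.singular}$_2$, using
\[
R_j^{p'}\Bigl(\mean{B_j}|\mu|^q\dx\Bigr)^{p'/q} \leq \lambda^{p(2-p)}R_j^{p}\Bigl(\mean{B_j}|\mu|^q\dx\Bigr)^{p/q},
\]
and then take the $1/p$-root to reach the asserted inequality. The main technical delicacy — and the mechanism behind the improvement from $[\tilde\omega(R_j)]^{1/2}$ to $\bar\omega(R_j)$ — lies in the book-keeping of the Young-inequality exponents: the modulus enters \rif{comp.definitive2} with power $2$, which becomes $[\bar\omega(R_j)]^p$ after the mandatory $p/2$-root, and finally $\bar\omega(R_j)$ after the outer $1/p$-root. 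Grouping the exponents incorrectly would either spoil the sharp form or prevent the absorption; tracking them faithfully is the only non-routine point.
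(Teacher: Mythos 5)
Your proposal is correct and follows exactly the route the paper intends: the paper's own proof of this lemma consists precisely of rerunning the argument of Lemma \ref{comparison.I1-} with \eqref{comp.definitive2} in place of \eqref{comp.definitive}, and your exponent bookkeeping (the $[\bar\omega(\rho)]^2$ turning into $[\bar\omega(R_j)]^p$ after the H\"older/absorption step and into $\bar\omega(R_j)$ after the final $1/p$-root, together with the conversion of the $R_j^{p'}$ term via \eqref{ass.singular}$_2$) matches the paper's computation.
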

\section{Proof of Theorems \ref{main.thm}-\ref{second.thm}: gradient bounds}
Here we continue the proof of Theorems \ref{main.thm}-\ref{second.thm} deriving local $L^\infty$ bounds for the gradient of 
minima. We shall actually provide a full proof of the fact that $Du$ is locally bounded in the case of 
Theorem \ref{main.thm} when $p \geq 2$. The case $1<p<2$ and the one of Theorem \ref{second.thm} can be then obtained similarly, and we shall provide remarks on how to make the necessary modifications in Section \ref{rem sec} below. Anyway, 
the proof below is written in a way that makes its adaptation to the case $p<2$ easier.  
We therefore start considering the case $p \geq 2$ and fixing the following quantities:
\begin{equation}\label{delta.B}
B=12^n\Cr{sup.est},\qquad  \delta:=\min\{\bar\tau,\bar\delta\}/8\in (0,1/16)\;.
\end{equation}
Here
\begin{itemize}
\item $\Cr{sup.est}$ is the constant appearing in the $\sup$ estimate of Lemma \ref{Lem.regolarity}
\item $\bar\tau$ is the constant appearing in \rif{sigma} from Lemma \ref{Lemmatau} with the choice 
$\tilde A= 3^n\Cr{sup.est}$
\item $\bar\delta$ is provided by Lemma \ref{ex.deca} for the choice $\bar\eps=4^{-10p}$.
\end{itemize}
In this way both $B$ and $\delta$ are determined as functions of the fixed parameters $n,p,\nu, L$. We continue by fixing, according to the choice of $\delta$ in \eqref{delta.B}, the smallest natural number $\tilde \ell\in\N$, larger than three, making the following inequality true:
\begin{equation}\label{choice.ell}
\delta^{(\beta\tilde \ell-n)p'}\leq  \frac1{10^{10np} (\Cr{sup.est}\Cr{osc.est})^p}
\end{equation}
with $\Cr{osc.est}$ and $\beta$ being fixed in Lemma \ref{Lem.regolarity}; this yields a dependence of $\tilde \ell$ only on $n,p,\nu, L$. We then choose $H_1,H_2\geq1$ as follows:
\[
H_1:=\frac{10^{10p}}{\delta^{4n}}+8^3c_1+10^n,\qquad 
H_2:=\frac{10^{10p}(\Cr{c:compa.k}+\Cr{c:compa.I1})}{\delta^{2n(\tilde \ell+4p)}}\;.
\]
We recall that $\Cr{c:compa.k}$ has been defined in \rif{compa.k} and $\Cr{c:compa.I1}$ in \rif{compa.I1}, this last one, for the numbers $B$ and $\delta$ fixed in \rif{delta.B}. All in all, this yields a dependence of $H_1$ and $H_2$ only on $n,p,\nu, L, \tilde L, \|\mu\|_{L^n}$. 
Finally, we can chose the threshold radius $R_0\leq \min\{R_H/2,1/4\}$ such that 
\eqn{choice.R}
$$
\left\{
 \begin{array}{c}
\displaystyle\tilde \omega(R_0)\leq \frac{\delta^{2n(\tilde \ell+4p)p}}{10^{10p^2}\Cr{c:compa.k}}\\ [16 pt]
\displaystyle \delta^{-1}\int_0^{4R_0}[\tilde \omega(\rho)]^{1/2}\frac{d\rho}{\rho}\leq \frac{\delta^{2n}}{10^{10p}\Cr{c:compa.I1}}
\end{array}
\right.
$$
are satisfied. Finally, for $2R\leq R_0$ such that $B_{2R}\subset \Omega$, we define the sequence of radii and balls $R_j$ and $B_j, \tilde B_j$ as in \rif{setting} for the choice of $\delta$ made in \rif{delta.B}. Notice that in this way $R_0$ is a quantity depending only on $n,p,\nu, L, \tilde L, \|\mu\|_{L^n}$ and $\omega(\cdot)$. We are going to prove the following estimate:
\begin{multline}\label{pointwise}
 |Du(x_0)|\leq \lambda:=H_1\left(\mean{B_R(x_0)}\big(s+|Du|\big)^{p'}\dx\right)^{1/p'}\\+H_2\left[\sum_{j=0}^\infty R_j\left(\mean{B_j}|\mu|^q\dx\right)^{1/q}\right]^{1/(p-1)}
\end{multline}
with $q\in(1,n)$ is defined as in \eqref{fissati}. This estimate, via a standard covering argument, will then lead to the local boundedness of $Du$. To this aim, we introduce the quantities:
\begin{equation}\label{mu.recall0}
a_j:=\big|(Du)_{B_j}\big|,\qquad E_j\equiv E_j(x_0):=\left(\mean{B_j}\big|Du-(Du)_{B_j}\big|^{p'}\dx\right)^{1/p'},
\end{equation}
the latter being usually called the {\em excess functional}, and 
\begin{equation}\label{mu.recall}
\mu_j:=R_j\left(\mean{B_j}|\mu|^q\dx\right)^{1/q}\quad\text{so that we have}\quad S_{R,\delta,q}(x_0)=
\sum_{j=0}^\infty\mu_j
\end{equation}
as described in \rif{serie}. 
Note that trivially, by the definitions of $\lambda, S_{R,\delta,q}(x_0)$ and our choice of $H_2\geq1$, we have
\begin{equation}\label{meas.atom}
S_{R,\delta,q}(x_0)=\sum_{j=0}^\infty R_j\left(\mean{B_{R_j}(x_0)}|\mu|^q\dx\right)^{1/q}\leq 
\left(\frac{\delta^{2n(\tilde \ell+4p)}}{10^{10p}(\Cr{c:compa.k}+\Cr{c:compa.I1})}\right)^{p-1}\lambda^{p-1}\;.
\end{equation}
Moreover, we define for positive $j\in\N$ the quantity
\begin{equation}\label{ultima}
C_j:=\left(\mean{B_{j-1}}\big(s+|Du|\big)^{p'}\dx\right)^{1/p'}+\left(\mean{B_{j}}\big(s+|Du|\big)^{p'}\dx\right)^{1/p'}+\delta^{-n}E_j.
\end{equation} 
Note that by our choice of $H_1$ we have the estimate
\begin{equation}\label{C1}
C_1\leq \frac{4}{\delta^{n}}\left(\mean{B_R(x_0)}\big(s+|Du|\big)^{p'}\dx\right)^{1/p'}\leq
\frac{4}{\delta^{n}}\frac{\lambda}{H_1}\leq \frac{\lambda}{32}\;.
\end{equation} 
We can therefore assume the existence of an exit index, $j_e\in\N$ such that 
\begin{equation}\label{exit.time}
C_{j_e}\leq\frac{\lambda}{32} \qquad\text{and}\qquad C_j>\frac{\lambda}{32}\quad\text{for all $j>j_e$}\;.
\end{equation}
Indeed, if this would not be the case, then, due to \eqref{C1}, \eqref{pointwise} would trivially follow, since there would exist a subsequence $\{j_m\}_{m\in\N}$, such that $C_{j_m}\leq \lambda$ and hence
\[
|Du(x_0)|=\lim_{j\to \infty}a_j\leq \limsup_{m\to\infty} \,C_{j_m}\leq\lambda
\]
for the Lebesgue point $x_0$ of $Du$ for which the sequence $B_j$ has been defined (such an $x_0$ being chosen arbitrarily). Observe that, from \rif{C1} it in particular follows that
\begin{equation}\label{esse}
s \leq \frac{\lambda}{10^n} \leq \frac{\lambda}{32}\;.
\end{equation}
 
\subsection{An iterative excess reduction}\label{simplify}
For $j\geq j_e$, let us consider the condition:
\begin{equation}\label{2scales}
\left(\mean{B_{j-1}}\big(s+|Du|\big)^{p'}\dx\right)^{1/p'}+\left(\mean{B_{j}}\big(s+|Du|\big)^{p'}\dx\right)^{1/p'}\leq \lambda
\end{equation}
and prove the following conditional inequality:
\begin{eqnarray}\label{sommadopo}
 &&\hspace{-4mm} \text{\eqref{2scales}}_j\quad \mbox{for} \quad  j\geq j_e\qquad\text{and}\qquad\eqref{esse}\qquad\Longrightarrow\\
 &&\nonumber\hspace{7mm} \big[E_{j+1}\big]^{p'}\leq\frac1{4^{p'}} \big[E_j\big]^{p'}+\frac{32\Cr{c:compa.I1}^{p'}}{\delta^{n}}\left\{[\tilde\omega(R_{j-1})]^{p'/2}\lambda^{p'}+\mu_{j-1}^{p'}\lambda^{p'(2-p)}\right\}.
  \end{eqnarray}
  In order to apply Lemma \ref{Lemma:comparison.I1}, we will prove that
\begin{equation}\label{point.comparison.map}
\frac\lambda{B}\leq |Dv_{j-1}|\leq B\lambda\qquad\text{in $B_j$}
\end{equation}
holds for $B=12^n\Cr{sup.est}$, that is for the choice made in \rif{delta.B}. Now we note that using our choices in
 \eqref{choice.R} and \rif{meas.atom} in estimate \eqref{compa.k}, after some computations we find
\begin{equation}\label{mean.small}
\mean{\tilde B_{j-1}}|Du-Dv_{j-1}|^{p'}\dx\leq \frac{\delta^{n(\tilde \ell+4p)}\lambda^{p'}}{10^{10p}}\leq \lambda^{p'}\;.
\end{equation}
Notice that by \rif{meas.atom} we are using that assumption \rif{ass.meas} is satisfied with the choice 
\[
A=H_2=\frac{10^{10p}(\Cr{c:compa.k}+\Cr{c:compa.I1})}{\delta^{2n(\tilde \ell+4p)}}\;.
\]
Estimate \rif{mean.small} in turn gives
\begin{multline}\label{ave.j-1}
\left(\mean{\tilde B_{j-1}}\big(s+|Dv_{j-1}|\big)^{p'}\dx\right)^{1/p'}\leq 2^{n/p'}\left(\mean{B_{j-1}}\big(s+|Du|\big)^{p'}\dx\right)^{1/p'}\\+\left(\mean{\tilde B_{j-1}}|Du-Dv_{j-1}|^{p'}\dx\right)^{1/p'} \leq 3^{n}\lambda
\end{multline}
where we have also used \eqref{2scales}. Hence, using \eqref{sup.lambda} we infer
\eqn{impl.exv.Dv_j0}
$$
\sup_{\tilde B_{j-1}/2}\, (s+|Dv_{j-1}|)\leq \Cr{sup.est}\mean{\tilde B_{j-1}}\big(s+|Dv_{j-1}|\big)\dx \leq 3^n\Cr{sup.est}\lambda
$$
so that \eqref{osc.lambda} with $B_R\equiv \tilde B_{j-1}$ implies
\eqn{impl.exv.Dv_j}
$$
\osc_{B_{j-1+\ell}}\, Dv_{j-1}\leq 3^n\Cr{sup.est}\Cr{osc.est}\delta^{\beta\ell}\lambda\qquad
\text{for all $\ell\in\N$}\;.
$$
Using first triangle's inequality, several times \eqref{prop.exc}, \eqref{impl.exv.Dv_j} and also \eqref{mean.small} we find
\begin{align}\label{exc.first.estimate}
\delta^{-p'n}\big[E_{j-1+\tilde \ell}\big]^{p'}&=\delta^{-p'n}\mean{B_{j-1+\tilde\ell}}\big|Du-(Du)_{B_{j-1+\tilde\ell}}\big|^{p'}\dx\nonumber\\
&\leq 4^{p'}\delta^{-p'n}\mean{B_{j-1+\tilde\ell}}\big|Dv_{j-1}-(Dv_{j-1})_{B_{j-1+\tilde\ell}}\big|^{p'}\dx\nonumber\\
&\hspace{2cm}+4^{p'}\delta^{-p'n}\mean{B_{j-1+\tilde\ell}}\big|Du-Dv_{j-1}\big|^{p'}\dx\nonumber\\
&\leq 4^{p'}\delta^{-p'n}\big[\osc_{B_{j-1+\tilde\ell}}\, Dv_{j-1} \big]^{p'}\nonumber\\
&\hspace{2cm}+4^{p'}\delta^{-n(p'+\tilde\ell)}\mean{\tilde B_{j-1}}\big|Du-Dv_{j-1}\big|^{p'}\dx\nonumber\\
&\leq (16^n\Cr{sup.est}\Cr{osc.est})^{p'}\delta^{(\beta\tilde\ell-n)p'}\lambda^{p'}+\frac{\delta^n\lambda^{p'}} {10^{10p}}\leq \frac{\lambda^{p'}}{10^{10p}}\;.
\end{align}
 Now, recalling that $C_{j-\tilde\ell}>\lambda/32$, we infer that 
\[
\left(\mean{B_{j-1+\tilde \ell}}\big(s+|Du|\big)^{p'}\dx\right)^{1/p'}+\left(\mean{B_{j+\tilde \ell}}\big(s+|Du|\big)^{p'}\dx\right)^{1/p'}\geq\frac{\lambda}{48}
\]
and at this point, using triangle's inequality and \eqref{mean.small}, and again recalling that $B_j\subset \tilde B_{j-1}$ we have
\begin{align*}
\frac{\lambda}{48}&\leq \left(\mean{B_{j-1+\tilde \ell}}\big(s+|Dv_{j-1}|\big)^{p'}\dx\right)^{1/p'}+\left(\mean{B_{j+\tilde \ell}}\big(s+|Dv_{j-1}|\big)^{p'}\dx\right)^{1/p'}\\
&\hspace{3cm}+2\delta^{-\frac n{p'}( \tilde \ell+1)}\left(\mean{\tilde B_{j-1}}|Du-Dv_{j-1}|^{p'}\dx\right)^{1/p'}\\
&\leq \left(\mean{B_{j-1+\tilde \ell}}\big(s+|Dv_{j-1}|\big)^{p'}\dx\right)^{1/p'}+\left(\mean{B_{j+\tilde \ell}}\big(s+|Dv_{j-1}|\big)^{p'}\dx\right)^{1/p'}\\ & \qquad 
+\frac\lambda{10^{10}}\;.
\end{align*}
The last estimate implies that at least one of the following inequalities holds:
\[
\mean{B_{j-1+\tilde \ell}}\big(s+|Dv_{j-1}|\big)^{p'}\dx\geq\frac{\lambda^{p'}}{128}\quad\text{or}\quad\mean{B_{j+\tilde \ell}}\big(s+|Dv_{j-1}|\big)^{p'}\dx\geq\frac{\lambda^{p'}}{128}\;.
\]
In any case thanks to \eqref{impl.exv.Dv_j0} - recall that $B_j\subset \tilde B_{j-1}$ - and the previous display we can apply Lemma \ref{Lemmatau} with $B_R\equiv \tilde B_{j-1}$, $b=s$, $\tau = 2\delta\leq \bar \tau$, $B_{\tau R}=B_j$ and suitable $\sigma$ depending on the size of $\tilde \ell$, which finally yields \eqref{point.comparison.map}; at this point we have the comparison estimate \rif{compa.I1} at our disposal. Using this estimate we can finally complete the proof by estimating similarly to \eqref{exc.first.estimate}. Indeed, 
using \eqref{exc.smallness} (with $\sigma =2\delta\leq \bar \delta$, recall \rif{delta.B}) together with our choice of $\bar\eps$ (made after \rif{delta.B}) and \eqref{compa.I1}:
\begin{align}\label{red.exc}
\nonumber\big[E_{j+1}\big]^{p'} &\leq 4^{p'}\mean{B_{j+1}}\big|Dv_j-(Dv_j)_{B_j}\big|^{p'}\dx+4^{p'}\mean{B_{j+1}}|Du-Dv_j|^{p'}\dx\\
&\leq \frac1{4^{8p}}\mean{\tilde B_j}\big|Dv_j-(Dv_j)_{B_j}\big|^{p'}\dx+\frac{4^{p'}}{\delta^{n}}\mean{\tilde B_j}|Du-Dv_j|^{p'}\dx\nonumber\\
&\leq \frac1{4^{5p}}\mean{B_j}\big|Du-(Du)_{B_j}\big|^{p'}\dx+16\big(1+\delta^{-n}\big)\mean{\tilde B_j}|Du-Dv_j|^{p'}\dx\nonumber\\
&\leq \frac1{4^{p'}} \big[E_j\big]^{p'}+\frac{32\Cr{c:compa.I1}^{p'}}{\delta^{n}}\left\{[\tilde \omega(R_{j-1})]^{p'/2}\lambda^{p'}+\mu_{j-1}^{p'}\lambda^{p'(2-p)}\right\}
\end{align}
and \eqref{sommadopo} is finally proven.
\subsection{An iterative mean-value bound}
Here we prove, by induction, that
\begin{equation}\label{final.size.estimate}
s+a_j+E_j\leq \frac\lambda2 ,\qquad \mbox{holds for every} \ j\geq j_e
\end{equation}
and this will immediately imply the pointwise estimate \eqref{pointwise} as in fact 
\[
\lim_{j \to \infty} a_j =|Du(x_0)|
\]
for every Lebesgue point $x_0$ of $Du$. Note that \eqref{final.size.estimate}$_{j_e}$ holds as consequence of the true definition of the exit-time index $j_e$ in \eqref{exit.time}. Indeed, simply note that
\[
s+a_{j_e}+E_{j_e}  \leq 2\left(\mean{B_{j_e}}\big(s+|Du|\big)^{p'}\dx\right)^{1/p'} + \delta^{-n}E_{j_e} \leq C_{j_e} \leq \frac{\lambda}{32}\;.
\]
Next, we suppose that, for some $\bar\jmath\geq j_e$, \eqref{final.size.estimate}$_j$ holds true for $j=j_e,\dots,\bar\jmath$. 
We note that 
\[
\eqref{final.size.estimate}_{j-1} \ \mbox{and} \ \eqref{final.size.estimate}_j\Longrightarrow \eqref{2scales}_j
\]
for all $j>j_e$ since we can use triangle inequality to estimate 
\begin{align*}
& \left(\mean{B_{j-1}}(s+|Du|)^{p'}\dx\right)^{1/p'}+\left(\mean{B_j}(s+|Du|)^{p'}\dx\right)^{1/p'}\\& 
\hspace{45mm}\leq 2s+  a_{j-1}+E_{j-1}+a_j+E_j\leq \lambda\;.
\end{align*}
Therefore \eqref{2scales}$_j$ holds for $j=j_e,\dots,\bar\jmath$. Indeed, if $j=j_e$ this plainly follows by the definition of $C_{j_e}$, 
while if $j>j_e$ this is essentially the content of the second-last display. Hence \eqref{sommadopo}$_j$ is at our disposal for $j=j_e,\dots,\bar\jmath$; summing up yields
\[
\sum_{j=j_e+1}^{\bar \jmath+1}E_j\leq\frac14 \sum_{j=j_e}^{\bar \jmath}E_j+\frac{32\Cr{c:compa.I1}}{\delta^{n}}
\sum_{j=j_e}^{\bar \jmath}\left\{[\tilde \omega(R_{j-1})]^{1/2}\lambda+\mu_{j-1}\lambda^{2-p}\right\}\;.
\]
Reabsorbing part of the first sum on the right-hand side and recalling the definition of $S_{R,\delta,q}(x_0)$ (see for instance \eqref{mu.recall}) we then have 
\begin{align}\label{E_j+1}
\sum_{j=j_e}^{\bar \jmath+1}E_j& \leq2E_{j_e}+\frac{4^4\Cr{c:compa.I1}}{\delta^{n}}
\left\{\sum_{j=0}^{\infty}[\tilde \omega(R_j)]^{1/2}+S_{R,\delta,q}(x_0)\lambda^{1-p}\right\}\lambda\nonumber\\
&\leq2\delta^n C_{j_e}+
\frac{4^4\Cr{c:compa.I1}}{\delta^{n}}\left\{\delta^{-1}\int_0^{4R}[\tilde \omega(\rho)]^{1/2}\frac{d\rho}{\rho}+S_{R,\delta,q}(x_0)\lambda^{1-p}\right\}\lambda\nonumber\\
&\leq \frac{\delta^n\lambda}{16}+\frac{\delta^n\lambda}{16}=\frac{\delta^n\lambda}{8}
\end{align}
by \eqref{choice.R} and \eqref{meas.atom}. We made use of the estimate
\[
\delta\sum_{j=0}^\infty[\tilde \omega(R_j)]^{1/2}\leq \int_0^{4R}[\tilde\omega(\rho)]^{1/2}\frac{d\rho}{\rho}\leq \int_0^{4R_0}[\tilde\omega(\rho)]^{1/2}\frac{d\rho}{\rho};
\]
see for instance \cite[Equation ($88$)]{KMStein}. In particular, we have proved that
\[
E_{\bar \jmath+1} \leq \frac{\lambda}{64}\;.
\]
At this point we infer
\begin{align*}
a_{\bar\jmath+1}&=a_{j_e}+\sum_{j=j_e}^{\bar \jmath}[a_{j+1}-a_j]\leq a_{j_e}+\sum_{j=j_e}^{\bar \jmath+1}\mean{B_{j+1}}|Du-(Du)_{B_j}|\dx\\
&\leq a_{j_e}+\delta^{-n}\sum_{j=j_e}^{\bar \jmath+1}E_j\leq C_{j_e}+\frac{\lambda}{8}\leq \frac\lambda4\;.
\end{align*}
Using the content of the last two displays and \rif{esse} yields
\[
s+a_{\bar\jmath+1}+E_{\bar\jmath+1}\leq  \frac{\lambda}{32} + \frac{\lambda}{4} + \frac\lambda{64} \leq \lambda\;.
\]
and the inductive step is concluded. This also concludes the proof of \rif{pointwise} thereby establishing the local boundedness of $Du$. 

\subsection{The case $1<p<2$ and Theorem \ref{second.thm}}\label{rem sec} The proof of Theorem \ref{main.thm} in the case $1<p<2$
is very much along the lines of the one for the case $p \geq 2$, actually being much simpler. 
The proof can be deduced from the one in the previous section upon replacing, in the definitions given through \rif{pointwise}-\rif{ultima} - the exponent $p'$ by $p$. In particular, the new definitions of $E_j$ and $C_j$ can be now given as
\[
E_j:=\left(\mean{B_j}\big|Du-(Du)_{B_j}\big|^{p}\dx\right)^{1/p}
\]
and
\[
C_j:=\left(\mean{B_{j-1}}\big(s+|Du|\big)^{p}\dx\right)^{1/p}+\left(\mean{B_{j}}\big(s+|Du|\big)^{p}\dx\right)^{1/p}
+\delta^{-n}E_j
\]
respectively. The proof then proceeds in the same way, upon using Lemma \ref{comparison.I1-} instead of Lemma \ref{Lemma:comparison.I1}. In this respect we just notice that the content of Section \ref{simplify} can be simplified in that we actually do not need to check the validity \rif{point.comparison.map}, since this is not required in Lemma \ref{comparison.I1-}.  

As for Theorem \ref{second.thm}, we notice that the proofs are now exactly equivalent, upon using Lemmas \ref{Lemma:improved.comparison.I1}-\ref{improvedcomparison.I1-} instead of Lemmas \ref{Lemma:comparison.I1}-\ref{comparison.I1-}. The only difference 
is essentially in that, whenever it appears, the modulus of continuity $\tilde \omega(\cdot)$ must be replaced by $[\bar \omega(\cdot)]^2$

\section{Proof of Theorems \ref{main.thm}-\ref{second.thm}: gradient continuity}\label{finalsec}
By the result obtained in the previous section, and since all the statements we are going to prove are local in nature, we can then assume that $Du \in L^\infty(\Omega)$
and therefore put
\begin{equation}\label{lambda.continuity}
\lambda:=4\|Du\|_{L^\infty(\Omega)}+4\;.
\end{equation}
Now, with $\Omega'\Subset\Omega$ being fixed, we want to prove that the gradient is 
continuous in $\Omega'$ by showing that $Du$ is the uniform limit of a net of continuous functions, namely its averages on small balls. In particular we are going to show that, for any $\eps\in(0,1)$, there exists a radius $R_\eps\leq \dist(\Omega',\partial\Omega)/100$ such that
\begin{equation}\label{quasi.Cauchy}
\big|(Du)_{B_{r_1}(x_0)}-(Du)_{B_{r_2}(x_0)}\big|\leq \eps\lambda
\end{equation}
for any $0<r_1<r_2\leq R_\eps$, uniformly for $x_0\in\Omega'$. This
means that the limit
\[
\lim_{\varrho \to 0}\,(Du)_{B_{\varrho}(x_0)}=Du(x_0)
\]
is locally uniform with respect to $x_0\in \Omega'$ and therefore the gradient $Du$ (which is in fact pointwise defined by 
the previous equality in the sense of the usual precise representative) is continuous. We are of course using that the functions $x_0 
\mapsto (Du)_{B_{\varrho}(x_0)}$ are continuous for fixed $\varrho \in (0,{\rm dist}(\Omega', \partial \Omega)/100)$. 

The proof uses a few of the arguments developed in the preceding section to get the local boundedness of the gradient and for this reason we shall confine ourselves to give the proof of Theorem \ref{main.thm} in the case $p\geq 2$. The proof for the case $1< p < 2$ and the one of Theorem \ref{second.thm} can be obtained with modifications similar to those described in Section \ref{rem sec}. We therefore start considering the numbers
\begin{equation}\label{delta.B.eps}
B=\frac{12^n\Cr{sup.est}}\eps,\qquad  \delta:=\min\{\bar\tau,\bar\delta\}/8\in(0,1/4)\;.
\end{equation}
Here
\begin{itemize}
\item $\Cr{sup.est}$ is the constant appearing in the $\sup$ estimate of Lemma \ref{Lem.regolarity}
\item $\bar\tau$ is the constant appearing in \rif{sigma} from Lemma \ref{Lemmatau} with the choice 
$\tilde A= 10^n\Cr{sup.est}/\eps$
\item $\eps$ is the number defined in the inequality in \rif{quasi.Cauchy}, that we want to prove here 
\item $\bar\delta$ is provided by Lemma \ref{ex.deca} for the choice $\bar\eps=4^{-10p}\eps$.
\end{itemize}
Notice that in this way both $B$ and $\delta$ are determined as functions of the fixed parameters $n,p,\nu, L, \eps$. Now we fix a radius $R_{1,\eps}\leq \dist(\Omega',\partial\Omega)/100$ such that the following conditions are satisfied: 
\begin{align}\label{choice.R1}
\nonumber 
&\tilde \omega(R_{1,\eps})\leq \frac{\delta^{4np}}{(\Cr{c:compa.k}+\Cr{c:compa.I1})^{2p}}\Big(\frac{\eps}{200}\Big)^{2p}\\
\nonumber
&\delta^{1-n/q}d(R_{1,\eps}) \leq \left[\frac{\delta^{4n}}{10^{4n}(\Cr{c:compa.k}+\Cr{c:compa.I1})}\right]^{p}\Big(\frac{\eps}{200}\Big)^{p}\\
&\delta^{-1}\int_0^{4R_{1,\eps}}[\tilde \omega(\rho)]^{1/2}\frac{d\rho}{\rho} \leq \frac{\delta^{3n}}{8\Cr{c:compa.I1}}\frac\eps{100}
\end{align}
where 
\begin{itemize}
\item the function $d(\cdot)$ has been introduced in Lemma \ref{Lem.Ln1}
\item the constant $\Cr{c:compa.k}\equiv \Cr{c:compa.k}(n,p,\nu, L,\tilde L,\|\mu\|_{L^n})$ appears in Lemma \ref{Lemma:comparison.I1}
\item the constant $\Cr{c:compa.I1}$ appears in Lemma \ref{Lemma:comparison.I1} with the choice $B =12^nc_1/\eps $. In this way we have that $c_4\equiv c_4(n,p,\nu, L,\tilde L,\|\mu\|_{L^n}, \eps)$
\item the exponent $q\equiv q(n,p) \in (1,n)$ has been defined in \eqref{fissati}. 
\end{itemize}
Let us observe that the radius $R_{1,\eps}$ depends only on $n,p,\nu,L,\omega(\cdot), d(\cdot), \dist(\Omega',\partial\Omega)$ and $\eps$. The dependence on $\mu$ is incorporated in the one on $d(\cdot)$. Using Lemma \ref{Lem.Ln1}, we then find that the following inequality holds too:
\begin{equation}\label{choice.R1d}
\sup_{0<\rho\leq R_{1,\eps}}\sup_{x_0\in\Omega}S_{\rho,\delta,q}(x_0) \leq\left[\frac{\delta^{4n}}{10^{4n}(\Cr{c:compa.k}+\Cr{c:compa.I1})}\right]^{p}\Big(\frac{\eps}{200}\Big)^{p} \;. 
\end{equation}
Then we fix a {\em starting radius} 
\begin{equation}\label{ilraggio}
R\in( \delta R_{1,\eps},R_{1,\eps}]
\end{equation} and, for points $x_0\in\Omega'$, we define radii and balls as follows:
\[
R_j:=\delta^j R,\qquad\qquad B_j:=B_{R_j}(x_0)\;.
\]
Accordingly $E_j\equiv E_j(x_0)$ and $\mu_j$ are defined as in \eqref{mu.recall0} and \eqref{mu.recall}, respectively. 
Note that \eqref{choice.R1d} implies that \rif{ass.meas} holds for instance with 
\[A=\frac{200\Cr{c:compa.k}}{\eps\delta^{2np}}\;.\]
Finally, instead of using an exit time argument, we shall consider the following condition:
\begin{equation}\label{exit.continuity}
\left(\mean{B_{j+1}}|Du|^{p'}\dx\right)^{1/p'}\geq\frac{\eps\lambda}{50}\;.
\end{equation}
We remark that all the forthcoming estimates are independent of the point $x_0\in \Omega'$. 
\subsection{Reduction of the excess, again}\label{red.excess}
Here we show the validity of the implication
\begin{eqnarray}
\nonumber &&\hspace{-4mm}
 \text{\eqref{exit.continuity}}_j\quad\Longrightarrow\\
&& \big[E_{j+1}\big]^{p'}\leq\Big(\frac\eps4\Big)^{p'} \big[E_j\big]^{p'}+\frac{32\Cr{c:compa.I1}^{p'}}{\delta^{n}}\left\{[\tilde \omega(R_{j-1})]^{p'/2}
\lambda^{p'}+\mu_{j-1}^{p'}\lambda^{p'(2-p)}\right\}\,.\label{exc.reduction.continuity}
\end{eqnarray}
To start with the proof we infer by \eqref{compa.k}, \eqref{choice.R1}$_{1}$ and the definition of $A$ the following inequality:
\begin{equation}\label{comp.j-1.continuity}
\mean{\tilde B_{j-1}}|Du-Dv_{j-1}|^{p'}\dx\leq\delta^{2n} \Big(\frac{\eps\lambda}{200}\Big)^{p'}+\delta^{2n} \Big(\frac{\eps\lambda}{200}\Big)^{p'}\leq \lambda^{p'}  
\end{equation}
which implies, by the definition of $\lambda$ in \eqref{lambda.continuity} and in a manner which is absolutely analogous to \eqref{ave.j-1},
\[
\left(\mean{\tilde B_{j-1}}(s+|Dv_{j-1}|)^{p'}\dx\right)^{1/p'}\leq  3^n\lambda
\]
and therefore, by Lemma \ref{Lem.regolarity} we conclude with
\[
\sup_{\tilde B_{j-1}/2}(s + |Dv_{j-1}|) \leq  3^n\Cr{sup.est}\lambda\;.
\]
Then, using \eqref{exit.continuity}$_j$ and \eqref{comp.j-1.continuity}
\begin{align*}
\frac{\eps\lambda}{50} &\leq \left(\mean{B_{j+1}}|Du|^{p'}\dx\right)^{1/p'}\\ &\leq \left(\mean{B_{j+1}}|Dv_{j-1}|^{p'}\dx\right)^{1/p'}+\delta^{-2\frac n{p'}}\left[\delta^{2n} \Big(\frac{\eps\lambda}{100}\Big)^{p'}\right]^{1/p'}
\end{align*}
that is, 
\[
\frac{\eps\lambda}{10^n\Cr{sup.est}}\leq\frac{\eps\lambda}{100}\leq \left(\mean{B_{j+1}}|Dv_{j-1}|^{p'}\dx\right)^{1/p'}\]
and 
\[\sup_{\tilde B_{j-1}/2}(s+|Dv_{j-1})|\leq  \frac{10^n\Cr{sup.est}\lambda}{\eps}\;.
\]
Lemma \ref{Lemmatau} with $b=0$ then gives
\[
|Dv_{j-1}|\geq \frac{\eps\lambda}{12^n\Cr{sup.est}}\quad \text{in $B_j$}
\]
and therefore, all in all we have proved that
\[\frac{\eps\lambda}{12^n\Cr{sup.est}}\leq |Dv_{j-1}|\leq  \frac{12^n\Cr{sup.est}\lambda}{\eps}\quad \text{in $B_j$.}
\]
At this point we can use Lemma \ref{Lemma:comparison.I1} (recall \eqref{lambda.continuity} and that \eqref{ass.meas} is satisfied due to \eqref{choice.R1d}) that gives
\[
\mean{\tilde B_j}|Du-Dv_j|^{p'}\dx\leq \Cr{c:compa.I1}^{p'}[\tilde \omega(R_{j-1})]^{p'/2}\lambda^{p'}+\Cr{c:compa.I1}^{p'}\lambda^{p'(2-p)}\mu_{j-1}^{p'}
\]
and following \eqref{red.exc} (the only difference at this point is the presence of a multiplicative factor $\eps^{p'}$ when using \eqref{exc.smallness}) we get \eqref{exc.reduction.continuity}. We remark that all the above estimates are independent of the choice of the starting radius $R$ fixed in \rif{ilraggio} and of the point $x_0\in\Omega'$. This fact will be used in the following step of the proof. 

\subsection{Smallness of the excess}
We prove in this section the following fact: for every $\sigma\in(0,1)$, there exists a radius $R_{2,\sigma}$, depending only on 
$n,p,\nu,L,\omega(\cdot), d(\cdot)$ and $\sigma$ such that 
\begin{equation}\label{prima}
\sup_{x_0\in\Omega'}\sup_{\rho\in(0,R_{2,\sigma}]}\mean{B_\rho(x)}\big|Du-(Du)_{B_\rho(x)}\big|^{p'}\dx\leq \sigma\lambda^{p'}.
\end{equation}
Notice that this will follow with the choice $R_{2,\sigma}:=\delta^2R_{1,\eps}$, where $\delta$ and $R_{1,\eps}$ have been fixed in \rif{delta.B.eps} and \eqref{choice.R1}, respectively, for the choice $\eps:=\sigma^{1/p'}/4$, once we prove that
\begin{equation}\label{iiiiis}
\big[E_{j+1}(x_0)\big]^{p'}\leq \sigma \lambda^{p'}\qquad\text{for any }j\in\N
\end{equation}
holds uniformly for $x_0\in\Omega'$, with the notation of the previous section. This is in turn a consequence of a simple argument together with \eqref{prop.exc}: indeed, take $r\leq R_{2,\sigma}:=\delta^2R_{1,\eps}$. Then $r=\delta^m R$ for some $m\in\N$, $m\geq2$ and some $R\in (\delta R_{1,\eps},R_{1,\eps}]$ and \rif{prima} and \eqref{iiiiis} coincide, since $R_j=r$. It therefore remains to prove  \eqref{iiiiis} and this is not difficult once having the estimates of Section \ref{red.excess} at our disposal; these will applied with the choice $\eps:=\sigma^{1/p'}/4$. Indeed, take $x\in\Omega'$ and consider the condition \eqref{exit.continuity} for $\eps:=\sigma^{1/p'}/4$. If it does not hold, then we have
\[
\big[E_{j+1}(x_0)\big]^{p'}\leq 2^{p'}\mean{B_{j+1}}|Du|^{p'}< 2^{p'}\Big(\frac{\eps}{50}\Big)^{p'}\lambda^{p'}\leq \sigma\lambda^{p'}.
\]
If on the other hand condition \eqref{exit.continuity} does hold, then we can use the inequality in \eqref{exc.reduction.continuity} together with \eqref{lambda.continuity}:
\begin{align*}
\big[E_{j+1}(x_0)\big]^{p'}&\leq\frac\sigma{16^{p'}} \big[E_j(x_0)\big]^{p'}+\frac{32\Cr{c:compa.I1}^{p'}}{\delta^{n}}\left\{[\tilde \omega(R_{j-1})]^{p'/2}
\lambda^{p'}+\mu_{j-1}^{p'}\lambda^{p'(2-p)}\right\}\\
&\leq \frac{\sigma}{4}\lambda^{p'}+\frac{\sigma}{4}\lambda^{p'}+\frac{\sigma}{2}\lambda^{p'}=\sigma \lambda^{p'},
\end{align*}
where we have also used \eqref{choice.R1}$_{1}$ and \eqref{choice.R1d}.

\subsection{A Cauchy-type sequence} By the result of the last section, for every $\eps\in(0,1)$ and with 
$\delta \equiv \delta (n,p,\nu, L, \eps)$ accordingly defined as in \rif{delta.B.eps}, we can find a radius $R_{3,\eps}$, depending on $n,p,\nu,L,\omega(\cdot), d(\cdot)$ and $\eps$, such that 
\begin{equation}\label{sm.ex.uniform}
\sup_{x\in\Omega'}\sup_{\rho\in(0,R_{3,\eps}]}\mean{B_\rho(x)}\big|Du-(Du)_{B_\rho(x)}\big|^{p'}\dx\leq \delta^{2np'}\Big(\frac{\eps}{200}\Big)^{p'}\lambda^{p'}\;. 
\end{equation}
We take $R_{3,\eps}$ in such a way that also $R_{3,\eps}\leq R_{1,\eps}$ holds, so that also \rif{choice.R1}-\rif{choice.R1d} are at our disposal. We then finally prove that \eqref{quasi.Cauchy} follows with $R_\eps:=\delta^2R_{3,\eps}$. We re-define the sequences 
\[
\bar R_j:=\delta^j R_{3,\eps},\qquad\qquad \bar B_j:=B_{\bar R_j}(x_0)
\]
and accordingly $\bar E_j$ and $\bar \mu_j$ are defined as in \eqref{mu.recall0} and \eqref{mu.recall}, respectively, 
starting from the new balls $\bar B_j$ instead of $B_j$. Let us now prove
\begin{equation}\label{discrete.quasi.Cauchy}
 \big|(Du)_{\bar B_k}-(Du)_{\bar B_h}\big|\leq\frac{\eps}{12}\lambda\qquad\text{for $2\leq h< k$}
\end{equation}
and later we shall show how to deduce \eqref{quasi.Cauchy} from the previous estimate. Define
\[
\mathcal L:=\left\{j\in\N_0:\left(\mean{\bar B_j}|Du|^{p'}\dx\right)^{1/p'}\leq\frac{\eps}{50}\lambda\right\}\qquad\text{and}\qquad j_e:=\min \mathcal L;
\]
note that it can be $\mathcal L=\emptyset$; in this case $j_e=\infty$. Now we deduce the following auxiliary estimate, useful in few lines: take $2\leq i_1\leq i_2$ and suppose $i\not\in\mathcal L$ for every $i\in\{i_1,\dots,i_2\}$. Then  \eqref{exit.continuity}$_j$ holds for $j=i_1-1,\dots,i_2-1$ and, for the same indexes, we have that the excess decay inequality in \eqref{exc.reduction.continuity} holds; summing up over the indexes, recalling that $\eps\in(0,1)$ and performing algebraic manipulations similar to those made in order to get \eqref{E_j+1} gives, since $R_{3,\eps}\leq R_{1,\eps}$
\begin{align}\label{small.sum.excess}
\sum_{j=i_1-1}^{i_2}\bar E_j&\leq 2\bar E_{i_1-1}+\frac{4^4\Cr{c:compa.I1}}{\delta^n}
\left\{\sum_{j=0}^\infty[\tilde \omega(\bar R_j)]^{1/2}+S_{R_{1,\eps},\delta,q}(x)\lambda^{1-p}\right\}\lambda\nonumber\\
&\leq\delta^{2n}\frac{\eps}{100}\lambda+\delta^{2n}\frac{\eps}{200}\lambda+\delta^{2n}\frac{\eps}{200}\lambda=\delta^{2n}\frac{\eps}{50}\lambda\;.
\end{align}
Notice that we have used \rif{sm.ex.uniform} and \rif{choice.R1}-\rif{choice.R1d}. 
Now we distinguish  three different cases: the first one is when $2\leq h< k< j_e$; this means that we can use \eqref{small.sum.excess} with $i_1=h$, $i_2=k$, yielding almost immediately \eqref{discrete.quasi.Cauchy}:
\begin{align}\label{sum.example}
\nonumber \big|(Du)_{\bar B_k}-(Du)_{\bar B_h}\big|
& \leq \sum_{j=h}^{k-1}\big|(Du)_{\bar B_{j+1}}-(Du)_{\bar B_j}\big|\\
& \leq \sum_{j=h}^{k-1}\mean{\tilde B_{j+1}}\big|Du-(Du)_{\bar B_j}\big|\dx\\
\nonumber
&\leq \delta^{-n}\sum_{j=h}^{k-1}\left(\mean{\bar B_j}\big|Du-(Du)_{\bar B_j}\big|^{p'}\dx \right)^{1/p'}\\& 
=\delta^{-n}\sum_{j=h}^{k-1}\bar E_j\leq \frac{\eps}{25}\lambda\;.
\end{align}
The second case is when $j_e\leq h< k$, and here we shall prove that
\begin{equation}\label{stupid.smallness}
\begin{array}{c}\displaystyle 
\big|(Du)_{\bar B_k}|\leq \frac{\eps}{25}\lambda\\ [10 pt]
\displaystyle |(Du)_{\bar B_h}\big|\leq \frac{\eps}{25}\lambda 
\end{array}
\end{equation}
from which clearly \eqref{discrete.quasi.Cauchy} follows; we start by estimating the first term. If $k\in\mathcal L$, \eqref{stupid.smallness}$_1$ is trivial by the definition of $\mathcal L$. If $k\not\in\mathcal L$, then we can consider the chain $\mathcal C:=\{\bar\jmath,\dots,k\}\subset\{j_e,\dots,k\}$ such that $\bar\jmath\in\mathcal L$ and $\{\bar\jmath+1,\dots,k\}\cap\mathcal L=\emptyset$; clearly, such chain exists since $k>j_e$. Again, we shall use \eqref{small.sum.excess} for $i_1=\bar\jmath+1$ and $i_2=k$ and, since $\bar\jmath\in\mathcal L$, we have by triangle's inequality
\[
\big|(Du)_{\bar B_k}\big|\leq \big|(Du)_{\bar B_{\bar\jmath}}\big|+\sum_{j=\bar\jmath}^{k-1}\big|(Du)_{\bar B_{j+1}}
-(Du)_{\bar B_j}\big|\leq \frac{\eps}{50}\lambda+\sum_{j=\bar\jmath}^{k}\tilde E_j\leq \frac{\eps}{25}\lambda
\]
similarly to what done in \eqref{sum.example}. The estimate for \eqref{stupid.smallness}$_2$ is analogous. Finally, we consider the case where $h< j_e\leq k$: as in \eqref{stupid.smallness}, $|(Du)_{\tilde B_k}|\leq {\eps\lambda}/{25}$ while, by \eqref{sum.example} with $k=j_e$
\[
\big|(Du)_{\bar B_{j_e}}-(Du)_{\bar B_h}\big|\leq\frac{\eps}{50}\lambda\ \ \Longrightarrow\ \ \big|(Du)_{\bar B_h}\big|\leq \big|(Du)_{\bar B_{j_e}}\big|+\frac{\eps}{50}\lambda\leq\frac{\eps}{25}\lambda\;.
\]
To conclude, we show how to deduce \eqref{quasi.Cauchy} from \eqref{discrete.quasi.Cauchy}. Indeed, take any two radii $0<r_1<r_2\leq R_\eps=\delta^2R_{3,\eps}$. Then, there are two exponents $2\leq h\leq k$ such that $\delta^{k+1}R_{3,\eps}\leq r_1\leq \delta^kR_{3,\eps}$ and $\delta^{h+1}R_{3,\eps}\leq r_2\leq \delta^hR_{3,\eps}$; using H\"older's inequality
\begin{multline*}
 \big|(Du)_{B_{r_1}}-(Du)_{\bar B_{k+1}}\big|\leq \mean{\bar B_{k+1}}\big|Du-(Du)_{B_{r_1}}\big|\dx\\
\leq \delta^{-n} \mean{B_{r_1}}\big|Du-(Du)_{B_{r_1}}\big|\dx\leq \frac\eps{10}\lambda
\end{multline*}
by \eqref{sm.ex.uniform}; analogously for $|(Du)_{B_{r_2}}-(Du)_{\bar B_{h+1}}|\leq \eps\lambda/10$; connecting these two estimates with \eqref{discrete.quasi.Cauchy} yields \eqref{quasi.Cauchy}. The proof is complete (see 
the remarks at the beginning of Section \ref{finalsec} for the case $1<p<2$ and the proof of Theorem \ref{second.thm}). 

\vs

{\bf Acknowledgments:}
This work has been supported by the Academy of Finland project 
``Regularity theory for nonlinear parabolic partial differential equations'' and by GNAMPA-INDAM. 

\end{document}